\documentclass{amsart}
\usepackage{amsmath, amssymb, amsthm}
\usepackage{mathtools}
\usepackage{amscd}
\usepackage{graphicx} 
\usepackage{todonotes}
\usepackage{tabularx}
\usepackage{nicefrac}
\usepackage{framed}
\usepackage{xcolor}
\colorlet{shadecolor}{green!25}
\usepackage{hyperref}
\usepackage{url}
\usepackage{enumitem}

\newtheorem{thm}{Theorem}[section]

\newtheorem{prop}[thm]{Proposition}
\newtheorem{lemma}[thm]{Lemma}

\newtheorem*{thm*}{Theorem}
\newtheorem*{alg*}{Algorithm}
\newtheorem*{lemma*}{Lemma}

\theoremstyle{remark}
\newtheorem{rmk}[thm]{Remark}
\newtheorem*{rmk*}{Remark}

\newtheorem*{notation*}{Notation}

\newtheorem*{example*}{Example}

\theoremstyle{definition}

\newtheorem*{defn*}{Definition}

\newcommand{\mybf}{\mathbb}

\newcommand{\bC}{\mybf{C}}

\newcommand{\bZ}{\mybf{Z}}
\newcommand{\bQ}{\mybf{Q}}

\newcommand{\Q}{\mybf{Q}}
\newcommand{\Z}{\mybf{Z}}
\newcommand{\C}{\mybf{C}}

\newcommand{\cO}{\mathcal{O}}

\newcommand{\al}{\alpha}
\newcommand{\QQ}{\mathbb{Q}}
\newcommand{\Qbar}{\overline{\mathbb{Q}}}

\providecommand{\abs}[1]{\left\lvert#1\right\rvert}

\newcommand{\ep}{\epsilon}
\newcommand{\ra}{\rightarrow}

\newcommand{\consalph}[1]{\alpha_{\sigma}\left({#1}\right)}
\newcommand{\consbet}[1]{\beta_{\sigma}\left({#1}\right)}

\DeclareMathOperator{\id}{id}
\DeclareMathOperator{\Gal}{Gal}

\title{Wandering points for the Mahler measure}
\author[Fili]{Paul Fili}
\email{paul.fili@okstate.edu}
\address{Oklahoma State University, Stillwater, OK, 74078, USA}

\author[Pottmeyer]{Lukas Pottmeyer}
\email{lukas.pottmeyer@uni-due.de}
\address{Universit\"{a}t Duisberg-Essen, 45117 Essen, Germany}

\author[Zhang]{Mingming Zhang}
\email{mingmiz@okstate.edu}
\address{Oklahoma State University, Stillwater, OK, 74078, USA}

\date{\today}

\begin{document}

\begin{abstract}
Mahler's measure defines a dynamical system on the algebraic numbers. In this paper, we study the problem of which number fields have points which wander under the iteration of Mahler's measure. We completely solve the problem for all abelian number fields, and more generally, for all extensions of the rationals of degree at most five.
\end{abstract}

\dedicatory{Dedicated to the memory of Andrzej Schinzel, 1937--2021}

\maketitle


\section{Introduction}
Fix an algebraic closure $\Qbar\subseteq \C$ of $\Q$ and let $\alpha\in\Qbar$ be an algebraic number with minimal polynomial \[f(x) = a_n x^n + \cdots + a_1 x + a_0\in\bZ[x].\] Suppose that $f(x)$ factors over $\bC$ as $f(x) = a_n(x-\alpha_1)\cdots (x-\alpha_n)$. Then the \emph{Mahler measure} of $\alpha$, denoted $M(\alpha)$, is defined to be the usual Mahler measure of its minimal polynomial $f(x)$:
\[
 M(\alpha) = \int_{0}^1 \log\,\abs{f(e^{2\pi i t})}\,dt = \abs{a_n}\prod_{i=1}^n \max\{1,\abs{\alpha_i}\}.
\]
The Mahler measure of $\alpha$ satisfies that $M(\alpha) \geq 1$ for all $\alpha\in\Qbar$, is Galois-invariant, satisfies $M(\alpha)=M(\alpha^{-1})$ for all $\alpha \in \overline{\Q}^*$, and $M(\alpha) = 1$ if and only if $\al$ is zero or a root of unity. Further, it is directly related to its absolute, logarithmic Weil height $h(\alpha)$ via the formula $[\bQ(\alpha):\bQ]\cdot h(\alpha) = \log M(\alpha)$. Perhaps the most famous open question regarding the Mahler measure is \emph{Lehmer's problem}, posed by D.H. Lehmer in 1933 \cite{Lehmer}, which asks if one can find, for any $\ep>0$, an algebraic number $\alpha\in \Qbar$ such that $1<M(\alpha)<1+\ep$. The smallest known Mahler measure greater than $1$ was found by Lehmer himself in his 1933 paper, and is the Salem number $\tau>1$ which is a root of the polynomial $x^{10}+x^9-x^7-x^6-x^5-x^4-x^3+x+1$. It has Mahler measure $M(\tau)= \tau = 1.17\ldots$ As no one has found an algebraic number of smaller Mahler measure strictly greater than $1$, the conjecture that the values of the Mahler measure are bounded away from $1$ has become known as the \emph{Lehmer conjecture}.\footnote{Somewhat erroneously, as Lehmer himself never conjectured this, merely asking if values closer to $1$ could be found.}

It is easy to see from the formula for the Mahler measure above that $M(\al)$ is an algebraic number for any $\al\in\Qbar$ (in fact, as we will discuss in more detail below, it is a particular class of algebraic integer called a \emph{Perron number}, a concept introduced by Lind \cite{Lind83}). It follows that $M : \Qbar \ra \Qbar$ so the Mahler measure can be thought of as a dynamical system on the algebraic numbers. As is common in dynamical systems, we write $M^0(\al) = \al$ and let $M^n(\al) = M(M^{n-1}(\al))$ for all $n\geq 1$. Dubickas \cite[\S 4]{DubickasClose} appears to have been the first to study the Mahler measure as a dynamical system. The question of which algebraic numbers could be Mahler measures has been studied quite extensively; see e.g. \cite{AdlerMarcus,Lind83,Lind84,BoydPerron,DubickasNrsMM,DixonDubickas,DubickasSalemNonrecip,Schinzel17+1}, and the introduction to \cite{FPZ2020} for more background. From a dynamical point of view, this question is related to the study of backward orbits of a given algebraic number. Many questions remain open in this direction. For instance, Schinzel noted in \cite{Schinzel17+1} that it is not known whether $1+\sqrt{17}$ is the Mahler measure of some algebraic number. This question is still open.

As usual in studying dynamical systems, we denote the (forward) orbit of $\al$ under $M$ by
\[
 \cO_M(\al) = \{ M^n(\al) : n\geq 0 \},
\] 
and we are interested in determining when the orbit is finite, in which case we say $\al$ is \emph{preperiodic}, or when it is infinite, in which case we say $\al$ is a \emph{wandering point}. Dubickas observed that, for algebraic integers $\al\geq 1$, $M$ is a nondecreasing function, and so 
\[
 M(\al) \leq M^2(\al)\leq M^3(\al)\leq \cdots 
\]
and this implies that the Mahler measure either keeps increasing, in which case the orbit of $\al$ under $M$ is infinite and $\alpha$ is a wandering point for the Mahler measure, or else the orbit is finite, and ends at a fixed point of the Mahler measure, that is, at an algebraic integer $\beta\geq 1 $ such that $M(\beta) = \beta$. (In particular, there are no points of strict period $n>1$ for the Mahler measure.)

One of the most interesting observations regarding the iteration of Mahler measure which Dubickas made is that the fixed points for the Mahler measure must be either natural numbers, Pisot numbers, or Salem numbers (the fixed points of the related dynamical system induced by the multiplicative height, have recently been classified by Dill \cite{Dill}). As the Mahler measure of Pisot numbers is well-known to be bounded away from $1$ by Smyth's celebrated theorem \cite{Smy}, it is a folklore conjecture that Lehmer's conjecture is equivalent to the question of whether the set of Salem numbers is bounded away from $1$. For this reason, it seems particularly interesting that the `nontrivial' fixed points of Mahler's measure (that is, those which are fixed, and have Mahler measure not already known to be bounded away from $1$) consist of only Salem numbers, and suggests that a better understanding of the iteration of Mahler's measure may yield some clues towards the Lehmer problem.

After Dubickas, the first nontrivial results about the iteration of Mahler's measure were by the third author \cite{ZhangThesis}, who proved among other things that every $\alpha$ whose minimal polynomial has degree $3$ or smaller must have a finite orbit, but that in degree $4$, it was possible to find wandering points. In a previous paper \cite{FPZ2020}, the authors proved several results about the existence of algebraic integers  with arbitrarily long, but finite, orbits of the Mahler measure and the behavior of units when the Galois group is large.

\begin{table}\label{tab1}
\begin{center}
\begin{tabular}{c||c|c|c|c|c}
$K$ & $S_4$ & $A_4$ & $D_4$ & $C_4$ & $C_2 \times C_2$ \\ \hline\hline
totally imaginary & $P$ & $P$ & $P$ & $P$ & $P$ \\
signature $(2,1)$ & $W$ & $W$ & $P$ & -- & -- \\
totally real & $W$ & $W$ & $W$ & $W$ & $P$ \\ 
\end{tabular}\caption{Classification according to the signature and the Galois group of the Galois closure of quartic number fields $K$, whether there are wandering units (W) or whether all elements in $K$ are preperiodic (P).}
\end{center}
\end{table}

Dubickas then posed to the authors (private communication) the question of whether one could classify which number fields have wandering points and which do not. Clearly, from the third author's work \cite{ZhangThesis}, fields with wandering points must have degree at least $4$. However, not all such fields have wandering points, as we will show. In this paper, we give some results toward this question of Dubickas, completely solving the problem in the abelian case. Table \ref{tab1} summarizes our results on quartic fields, giving a classification of quartic number fields with wandering points.  For fields of degree $5$ over $\bQ$, we prove that one can always find a wandering point:
\begin{thm}
Let $K/\Q$ be an extension of degree five. Then there exists an algebraic unit in $K$ which is wandering under iteration of the Mahler measure $M$.
\end{thm}

The main theorem of this paper is the following, which completely classifies the existence of wandering points for the Mahler measure in abelian fields:
\begin{thm}\label{thm:main-abelian}
Let $K/\Q$ be an abelian number field. Then all elements in $K$ are preperiodic under iteration of $M$ if and only if the maximal totally real subfield of $K$ has Galois group isomorphic to $C_1$, $C_2$, $C_3$, or $C_2\times C_2$, where $C_n$ denotes the cyclic group of order $n$. In all other cases, $K$ contains a wandering algebraic unit.
\end{thm}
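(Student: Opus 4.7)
My plan splits into three parts: a reduction from the CM case to the totally real case, verification of the implication for the Galois groups in the list, and an existence argument for wandering units otherwise.

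\textbf{Step 1: Reduction to the totally real case.} The key observation is that when $K/\Q$ is abelian, $M$ maps $K$ into its maximal totally real subfield $K^+$. If $K$ is totally real this is trivial; if $K$ is CM then complex conjugation $c$ lies in $G = \Gal(K/\Q)$ and, by abelianness, commutes with every $\sigma \in G$. Hence $|\sigma(\alpha)|^2 = \sigma(\alpha \cdot c(\alpha)) = \sigma(N_{K/K^+}(\alpha))$, and the set $T \subseteq G$ of conjugates with $|\sigma(\alpha)| > 1$ is a union of $\langle c \rangle$-cosets. Pairing up cosets gives
\[
M(\alpha) = |a_n|\prod_{\bar{\sigma} \in T/\langle c \rangle} \bar{\sigma}\bigl(N_{K/K^+}(\alpha)\bigr),
\]
which is a product of Galois conjugates of an element of $K^+$, hence lies in $K^+$. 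This reduces everything to the case $K = K^+$ totally real.

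\textbf{Step 2: Preperiodicity when $\Gal(K/\Q)$ is in the list.} When $\Gal(K/\Q) \in \{C_1, C_2, C_3\}$, every element of $K$ has degree at most three and is preperiodic by \cite{ZhangThesis}. For $\Gal(K/\Q) = C_2 \times C_2 = \{e,\sigma,\tau,\sigma\tau\}$, I would carry out a case-by-case study of $\alpha \in K$ according to $|T|$, where again $T = \{g : |g(\alpha)|>1\}$. The cases $|T| \in \{0,4\}$ give $M(\alpha) \in \Z$; $|T| \in \{1,3\}$ give $M(\alpha)$ with exactly one conjugate of absolute value greater than one, hence a Pisot or Salem fixed point of $M$; and the crucial combinatorial accident for the biquadratic group is that every two-element subset of $G$ is a coset of some order-two subgroup $\{e, h\}$, so $\pm\prod_{g\in T} g(\alpha) = g_0(\alpha \cdot h(\alpha))$ is $h$-invariant, forcing $M(\alpha)$ into a quadratic subfield, where preperiodicity is already known.

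\textbf{Step 3: Construction of wandering units otherwise.} Suppose $\Gal(K/\Q)$ is abelian and not in the list. By passing to the fixed field of an appropriate subgroup, I may reduce to the case where $K$ has one of the minimally bad abelian Galois groups: $C_4$, $(C_2)^3$, $C_6$, $C_9$, $C_3 \times C_3$, or $C_p$ for a prime $p \geq 5$. For each I would construct an explicit wandering unit. The template is to find $\alpha$ whose set $T$ is not a coset of any proper subgroup of $G$, so that $M(\alpha)$ does not drop into a proper subfield, and whose iterated Mahler measures retain this property. For $C_4$ the situation is the opposite of the biquadratic case: only two of the six two-element subsets of $C_4$ are cosets, so a generic unit in (say) $\Q(\zeta_{16})^+$ or $\Q(\zeta_{15})^+$ lands in the non-coset pattern. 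For $C_5$ the preceding degree-five theorem is directly available. For the remaining minimally bad groups, one constructs tailored units in a cyclotomic subfield, exploiting the larger unit-group rank to secure the required non-coset structure for $T$ and its iterates.

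\textbf{Main obstacle.} The conceptual backbone (Steps 1 and 2) is straightforward once one spots the $c$-stability of $T$ and the coset accident for $C_2 \times C_2$; the real difficulty is Step 3. To verify that a concrete unit $\alpha$ actually wanders one must control two things simultaneously along the orbit of $M$: the combinatorial type of $T_n = \{g : |g(M^n\alpha)|>1\}$, so that no iterate drops into a proper subfield and becomes preperiodic there, and the growth of $M^n(\alpha)$, so that no iterate lands on a Pisot, Salem, or integer fixed point. Carrying out this control uniformly across all minimally bad abelian groups is the main technical challenge of the proof.
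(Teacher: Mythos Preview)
Your three–step architecture matches the paper's exactly: reduce to $K^+$, verify preperiodicity for the four small groups, and exhibit a wandering unit in a minimal bad subfield otherwise. Step~1 is correct and is what the paper does.

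Step 2 contains a genuine error. Your claim that $|T|\in\{1,3\}$ forces $M(\alpha)$ to have exactly one conjugate outside the unit circle is false for non-units. When $|T|=3$ one has $M(\alpha)=\pm N(\alpha)/\alpha_4$, whose conjugates have absolute value $|N(\alpha)|/|\alpha_i|$; if $|N(\alpha)|>1$ several of these can exceed $1$. For instance $\alpha=1+\sqrt2+\sqrt3\in\Q(\sqrt2,\sqrt3)$ has $|T|=3$ and $N(\alpha)=-8$, and all four conjugates of $M(\alpha)=8/(1+\sqrt2-\sqrt3)$ lie outside the unit circle. The paper handles this by iterating once more: if $M(\alpha)$ again has three conjugates outside the unit circle, one invokes \cite[Proposition~1]{FPZ2020} to conclude preperiodicity. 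Without that extra ingredient the biquadratic case is incomplete.

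In Step 3 your list of minimally bad quotients is correct (and equivalent to, though finer than, the paper's, which treats all odd $C_n$ with $n\ge5$ at once rather than $C_p$ and $C_9$ separately). However, your proposed ``template'' of arranging $T$ to avoid cosets is too coarse to control the entire orbit: avoiding proper subfields does not by itself prevent an iterate from being a Pisot or Salem fixed point. The paper's actual constructions are more varied and more specific than this. For $C_2^3$ and $C_3\times C_3$ it builds units with $M(\alpha)=\alpha^2$ and appeals to Lemma~\ref{lem:torsionfree}; for totally real $C_4$ it shows the orbit has length at least three and invokes \cite[Theorem~2]{FPZ2020}; for $C_6$ and for odd $C_n$ it produces a unit whose full \emph{ordering} of conjugate absolute values (not just the set $T$) is reproduced by $M$, so that no iterate can be a fixed point. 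The odd-cyclic case in particular (Lemma~\ref{lem:distribution2}) requires a delicate combinatorial argument tracking how $M$ permutes this ordering, and is the technical heart of the proof; nothing in your plan suggests this mechanism.
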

Note that this implies that in imaginary abelian extensions with Galois group $C_2^3$, there are no wandering points. We could not find a field of degree larger than $8$ with this property. In the course of proving our theorem, we also classify all Galois extensions of $\Q$ of degree at most $9$ which contain a wandering point under iteration of $M$. In particular, by Theorem \ref{thm:main-abelian} any Galois extension of prime degree $p\geq 5$ contains a wandering algebraic unit.

We note that, in all cases in which we prove the existence of wandering points, we actually prove the existence of wandering \emph{units}. It remains open whether there are fields which contain a wandering point, but not a wandering unit.

Portions of this paper previously appeared in the third author's Ph.D. thesis \cite{ZhangPhDThesis}. The authors would like to thank Art\={u}ras Dubickas for posing the question which led to this paper, and Gabriel Dill for useful comments on an earlier version of this paper.

\section{Preliminaries}
When discussing Galois groups of number fields, we will use the usual notation $S_n$ and $A_n$ for the symmetric and alternating group on $n$ letters, respectively, $D_n$ for the dihedral group on $n$ letters, and $C_n$ for the cyclic group of order $n$. We will also have occasion to use two less common groups: we will denote by $F_5$ the \emph{Frobenius group} on $5$ letters, which is a group of order $20$ given by the generators $\sigma,\tau$ under the relations $\sigma^5=\tau^4=1$ and $\tau\sigma\tau^{-1}= \sigma^2$. (Indeed, any choice of $\sigma,\tau\in F_5$ with $\sigma$ of order $5$ and $\tau$ of order $4$, satisfies either $\tau\sigma\tau^{-1}=\sigma^2$ or $\tau^{-1} \sigma \tau = \sigma^2$.) We also denote by $Q_8$ the \emph{quaternion group} which has generators $\sigma,\tau$ satisfying the relations $\sigma^4=1$, $\sigma^2=\tau^2$, and $\tau\sigma\tau^{-1}=\sigma^3$. For more information about the classification of transitive permutation groups on $n$ letters, see for example \cite{ConwayTransitive}. One can also obtain lists of all transitive subgroups of $S_n$ via databases such as the LMFDB \cite{lmfdb} or via computer software packages such as Sage \cite{sagemath}.

We start with a lemma which will be frequently used to prove that an algebraic number $\al$ is a wandering point for the Mahler measure. Recall that an algebraic number $\alpha$ is called \emph{torsion-free} if, for all $\sigma\in\Gal(\overline{\Q}/\Q)$ such that $\sigma(\al)\neq \al$, the algebraic number $\alpha/\sigma(\alpha)$ is not a root of unity. (This terminology seems to have been introduced by Dubickas \cite{DubickasDegreeLinForm}.) Torsion-free numbers are interesting in part because $\alpha$ being torsion-free is equivalent to the condition that, for any $n\in \mathbb{N}$, we have $[\Q(\alpha^n):\Q]=[\Q(\alpha):\Q]$. 

\begin{lemma}\label{lem:torsionfree}
If $\alpha$ is torsion-free, $\alpha\neq \pm1$, and $M^k(\al)=\al^n$ for some positive integers $k$ and $n$ with $n\geq 2$, then $\al$ is a wandering point for $M$. In particular, if for any $\alpha \in \overline{\Q}^*$ not a root of unity we have $M^k(\alpha)=M^\ell(\alpha)^n$ for positive integers $k,\ell,n$, with $k>\ell$ and $n\geq 2$, then $\alpha$ is a wandering point under iteration of $M$.
\end{lemma}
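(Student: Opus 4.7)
The plan is to argue by contradiction and show that the height sequence $h(M^j(\alpha))$ is unbounded along the subsequence $j=k,2k,3k,\ldots$, which is incompatible with a finite orbit.

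I would first record the computational tool that powers everything: \emph{if $\gamma$ is a torsion-free algebraic integer, then $M(\gamma^n)=M(\gamma)^n$}. The torsion-free hypothesis forces the conjugates $\gamma_i^n$ to be pairwise distinct (an equality $\gamma_i^n=\gamma_j^n$ would make $\gamma_i/\gamma_j$ a root of unity), so $[\Q(\gamma^n):\Q]=[\Q(\gamma):\Q]$. Combining this with the Weil-height identity $\log M(\gamma)=[\Q(\gamma):\Q]\,h(\gamma)$ and the homogeneity $h(\gamma^n)=n\,h(\gamma)$ yields $\log M(\gamma^n)=n\log M(\gamma)$. Note also that $\alpha$ is forced to be an algebraic integer, since $\alpha^n=M^k(\alpha)$ is one.

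Next I would iterate. Applying the tool to $\alpha$ gives $M^{k+1}(\alpha)=M(\alpha^n)=M(\alpha)^n$. Crucially, $\alpha^{n^j}$ is also torsion-free for every $j\geq 0$, because $(\alpha_i/\alpha_j)^{n^j}$ is a root of unity if and only if $\alpha_i/\alpha_j$ is. The goal is to propagate the hypothesis into the identity
\[
M^{jk}(\alpha)=\alpha^{n^j}\qquad\text{for every }j\geq 1,
\]
by induction on $j$. For the inductive step, one uses $M^{(j+1)k}(\alpha)=M^k(M^{jk}(\alpha))=M^k(\alpha^{n^j})$, and analyzes $M^k(\alpha^{n^j})$ one application of $M$ at a time, using the tool at each stage together with the fact that the relevant iterates remain torsion-free with respect to the appropriate exponent.

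Once the identity is in hand, the contradiction is immediate: $h(M^{jk}(\alpha))=n^j h(\alpha)$, and $h(\alpha)>0$ because $\alpha$ is torsion-free and $\alpha\neq\pm 1$, hence not a root of unity. Therefore the heights of iterates are unbounded. On the other hand, by Dubickas' nondecreasing property, preperiodicity of $\alpha$ would imply that $\{M^j(\alpha):j\geq 1\}$ is a nondecreasing sequence of positive real algebraic integers that stabilizes at a fixed point $\beta$ of $M$; in particular, $\{h(M^j(\alpha)):j\geq 0\}$ is finite, contradicting unboundedness.

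For the ``in particular'' statement, set $\beta=M^\ell(\alpha)$, so that $M^{k-\ell}(\beta)=\beta^n$ with $k-\ell\geq 1$ and $n\geq 2$. Since the forward orbit of $\beta$ is a tail of the orbit of $\alpha$, $\alpha$ wanders if and only if $\beta$ does, and $\beta\neq\pm 1$ because $\alpha$ is not a root of unity. One then reduces to (or mirrors) the first statement applied to $\beta$. The hard part of the whole argument is the inductive propagation $M^{jk}(\alpha)=\alpha^{n^j}$: this requires carefully controlling how the Mahler measure interacts with $n^j$-th powers under the intermediate iterates, and is where the torsion-freeness hypothesis must be used beyond the initial step.
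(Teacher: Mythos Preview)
Your overall strategy matches the paper's: establish $M(\gamma^n)=M(\gamma)^n$ for torsion-free $\gamma$, bootstrap to $M^{jk}(\alpha)=\alpha^{n^j}$, and conclude that the orbit is infinite. The height/contradiction packaging is a cosmetic variant of the paper's direct observation that $\alpha^{n^r}$ for $r=1,2,\ldots$ are all distinct.

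However, there is a genuine gap in your inductive step. To pass from $M(\alpha^{n^j})=M(\alpha)^{n^j}$ to $M^2(\alpha^{n^j})=M^2(\alpha)^{n^j}$, you must apply your ``tool'' with $\gamma=M(\alpha)$, which requires $M(\alpha)$ itself to be torsion-free; and then $M^2(\alpha)$, and so on. You assert that ``the relevant iterates remain torsion-free'' but give no reason, and this does \emph{not} follow from the torsion-freeness of $\alpha$ alone. The paper closes this gap by invoking the fact (due to Dixon--Dubickas) that the Mahler measure of any algebraic number is a Perron number, and Perron numbers are automatically torsion-free. With that single input, the induction becomes a one-liner: $M^{2k}(\alpha)=M^k(\alpha^n)=M^k(\alpha)^n=\alpha^{n^2}$, and so on.

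The same gap recurs in your ``in particular'' reduction: you set $\beta=M^\ell(\alpha)$ and appeal to the first statement, but that statement has the hypothesis that $\beta$ is torsion-free. For a general $\alpha\in\overline{\Q}^*$ which is merely not a root of unity, nothing you have said forces $M^\ell(\alpha)$ to be torsion-free; again, it is the Perron-number property of Mahler measures that supplies this. Once you add that fact, both parts go through exactly as in the paper.
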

\begin{proof}
We start by proving the first statement.  
Given that $\alpha$ is torsion-free we know $[\mathbb{Q}(\alpha^n):\mathbb{Q}]=[\mathbb{Q}(\alpha):\mathbb{Q}]$. Let the set of conjugates of $\alpha$ be $\{\alpha_1, ...,\alpha_m\}$, then the set of conjugates of $\alpha^n$ is $\{\alpha_1^{n}, ...,\alpha_m^{n}\}$, hence 
\[
M(\alpha^n)=\underset{\left|\alpha_{i}^{n}\right|\geq 1}{\prod}\left|\alpha_{i}^{n}\right|= \underset{\left|\alpha_{i}\right|\geq 1}{\prod}\left|\alpha_{i}\right|^{n}=\bigg(\underset{\left|\alpha_{i}\right|\geq 1}{\prod}\left|\alpha_{i}\right|\bigg)^{n}=(M(\alpha))^n.
\]
Since the Mahler measure of any algebraic number is always a Perron number \cite{DixonDubickas}, it is in particular always torsion-free. Hence, we get
\[
M^{2k}(\alpha)=M^k(\alpha^n)=M^k(\alpha)^n=\alpha^{n^2},
\]
and inductively it follows $M^{rk}(\alpha)=\alpha^{n^r}$ for all $r\in\mathbb{N}$. This implies that $\alpha$ is a wandering point under iteration of $M$.

Now let $\alpha$ be an arbitrary algebraic number and let $k>\ell\geq 1$ be integers, with $M^k(\alpha)=M^\ell(\alpha)^n$ for some $n\geq 2$. Then we have $M^{k-\ell}(M^\ell(\alpha))=M^\ell(\alpha)^n$. As noted above $M^\ell(\alpha)$ is torsion-free. Hence, it follows that $M^\ell(\alpha)$ is wandering under iteration of $M$. Thus, $\alpha$ is a wandering point as well.
\end{proof}

\begin{lemma}\label{lem:deg5}
Let $K$ be a number field with an odd number of pairwise not complex conjugate embeddings $\sigma_{1},\ldots, \sigma_{2n+1}$ into $\C$. Assume that for some $q\leq n$ the $2q$ embeddings $\sigma_{n-q+2},\ldots, \sigma_{n+q+1}$ are complex, and the other embeddings are real. Then there is an algebraic unit $\alpha\in K$ satisfying
\begin{enumerate}[label=(\roman*)]
\item $\abs{\sigma_1(\alpha)} > \ldots > \abs{\sigma_{n+1}(\alpha)} > 1 > \abs{\sigma_{n+2}(\alpha)} > \ldots > \abs{\sigma_{2n+1}(\alpha)}$, and
\item $\abs{\sigma_1(\alpha)\sigma_{n+2}(\alpha)}<1$.
\end{enumerate}
\end{lemma}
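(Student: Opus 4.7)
The plan is to use Dirichlet's unit theorem to produce a unit whose archimedean absolute values fit the prescribed pattern. Define the log-embedding $L\colon \cO_K^* \to \bR^{2n+1}$ by $L(u) = (\log|\sigma_1(u)|,\ldots,\log|\sigma_{2n+1}(u)|)$. Writing $e_i = 1$ if $\sigma_i$ is real and $e_i = 2$ if $\sigma_i$ is complex (so that $e_i$ is the local degree), the norm-one condition for units yields $L(u) \in H$, where
\[
H = \Big\{ (y_1,\ldots,y_{2n+1}) \in \bR^{2n+1} : \sum_{i=1}^{2n+1} e_i\, y_i = 0\Big\}.
\]
Since $K$ has signature $(2n+1-2q,\ 2q)$, Dirichlet's theorem says that $L(\cO_K^*)$ is a lattice in $H$ of rank $(2n+1-2q)+2q-1=2n=\dim H$, hence a \emph{full} lattice.

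Conditions (i) and (ii) cut out an open positive cone $C \subset \bR^{2n+1}$. Any full lattice in $H$ meets every nonempty open positive cone of $H$: given $v \in C \cap H$, scaling to $Rv \in C \cap H$ produces a ball in $C$ of radius $R\delta$ (where $\delta>0$ is the radius of some open ball around $v$ contained in $C$), and for $R$ large enough this exceeds the covering radius of $L(\cO_K^*)$. So the whole problem reduces to producing a single point of $C \cap H$.

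To exhibit such a point, I would start with the two-level candidate
\[
y_1 = \cdots = y_{n+1} = 1, \qquad y_{n+2}=\cdots=y_{2n+1} = -\gamma,
\]
and choose $\gamma$ so that the hyperplane equation holds. Since the complex indices $n-q+2,\ldots,n+q+1$ sit symmetrically in the middle, the indices $1,\ldots,n+1$ contain $n-q+1$ real and $q$ complex embeddings with total weight $w_+=(n-q+1)+2q=n+q+1$, while the indices $n+2,\ldots,2n+1$ contain $n-q$ real and $q$ complex embeddings with total weight $w_-=(n-q)+2q=n+q$. Hence $\gamma = w_+/w_- = (n+q+1)/(n+q) > 1$, and automatically $y_1 + y_{n+2} = 1 - \gamma = -1/(n+q) < 0$, verifying a strict version of (ii). A small ``staircase'' perturbation $\delta_i$ with $\sum e_i \delta_i = 0$, obtained as a positive combination of $(n,n-1,\ldots,0,0,\ldots,0)$ and $(0,\ldots,0,0,-1,\ldots,-(n-1))$ with coefficients chosen so that the combined weighted sum vanishes, turns the non-strict orderings $y_i = y_{i+1}$ into strict ones while preserving all other open inequalities for small enough coefficients.

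The only substantive input beyond Dirichlet's theorem is the inequality $w_+ > w_-$, i.e.\ $\gamma > 1$. This one-unit excess is exactly what the hypothesis supplies: the \emph{odd} total number of non-conjugate embeddings with the complex ones placed symmetrically in the middle forces the upper half $\{1,\ldots,n+1\}$ to carry one more real embedding than the lower half $\{n+2,\ldots,2n+1\}$, while the complex contributions to either half cancel. Without this parity, $\gamma$ could equal $1$ and condition (ii) would fail; this is the heart of the argument, and everything else is routine unit-group bookkeeping.
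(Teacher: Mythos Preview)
Your argument is correct. The key observation that the weighted ``upper'' mass $w_+=n+q+1$ strictly exceeds the ``lower'' mass $w_-=n+q$ is exactly what makes $C\cap H$ nonempty, and once that is established the covering-radius argument for a full lattice in $H$ finishes the job. The staircase perturbation works as you describe: the strict inequalities $y_{n+1}>0$, $y_{n+2}<0$, and $y_1+y_{n+2}<0$ are already open at the two-level candidate, and your combination $av_1+bv_2$ with $a,b>0$ chosen so that $\sum e_i(av_{1,i}+bv_{2,i})=0$ breaks the ties within each block while staying in $H$.

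Your route differs from the paper's. The paper works with the \emph{weighted} log map $\alpha\mapsto(\varepsilon_i\log|\sigma_i(\alpha)|)$, so the lattice sits in the standard hyperplane $\sum x_i=0$, and then builds an explicit lattice point by prescribing intervals for $x_1,\ldots,x_{2n}$ in terms of a large parameter $k$ and a bound $B$ on a fundamental domain; the final coordinate $x_{2n+1}$ is then controlled by a case split on whether $q<n-1$, $q=n-1$, or $q=n$. Your argument trades this explicit bookkeeping for the abstract principle ``a full lattice meets every nonempty open cone,'' isolating the single inequality $w_+>w_-$ as the only nontrivial content. The payoff is that you avoid the case analysis on $q$ entirely and make transparent why the odd total count and the symmetric placement of the complex places matter. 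The paper's approach, on the other hand, is more hands-on and yields concrete size estimates on the $|\sigma_i(\alpha)|$, which are not needed here but could be useful if one wanted quantitative control.
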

\begin{proof}
Let $\mathcal{O}_K^*$ be the group of algebraic units in $K$. Set 
\[
\varepsilon_i = \begin{cases}
1 & \text{ if } \sigma_i \text{ is real} \\ 2 & \text{ if } \sigma_i \text{ is complex}
\end{cases}
\]
and define for all $\alpha \in \mathcal{O}_K^*$
\[
L(\alpha)=(\varepsilon_i \log \vert \sigma_i(\alpha)\vert)_{i\in\{1,\ldots 2n+1\}}.
\]
By Dirichlet's unit theorem, $L(\mathcal{O}_K^*)$ is a lattice of rank $2n$ in the hyperplane 
\[
\bigg\{(x_1,\ldots,x_{2n}, -\sum_{i=1}^{2n} x_i)\vert x_1,\ldots,x_{2n}\in\mathbb{R}\bigg\} \subseteq \mathbb{R}^{2n+1}.
\]
We fix a fundamental domain of this lattice, and some $B\in\mathbb{R}$ which is greater than any vector spanning this domain. Let $k \geq n+1$ be a large integer, which we will fix later on. Then, there is an element $(x_1,\ldots,x_{2n},y)\in L(\mathcal{O}_K^*)$ such that
\[
x_i \in \begin{cases}
((k-i)B,(k-i+1)B) & \text{ for } i\in\{1,\ldots,n-q+1\} \\
(2(k-i)B,2(k-i+1)B) & \text{ for } i \in \{n-q+2,\ldots,n+1\} \\
(-2(i-n+k-1)B,-2(i-n+k-2)B) & \text{ for } i \in \{n+2,\ldots, n+q+1\} \\
(-(i-n+k-1)B,-(i-n+k-2)B) & \text{ for } i \in \{n+q+2,\ldots,2n\}.
\end{cases}
\]
In particular, there is $\alpha\in\mathcal{O}_K^*$ with $L(\alpha)=(x_1,\ldots,x_n,y)$. By construction, this $\alpha$ satisfies
\[
\abs{\sigma_1(\alpha)} > \ldots > \abs{\sigma_{n+1}(\alpha)} > 1 > \abs{\sigma_{n+2}(\alpha)} > \ldots > \abs{\sigma_{2n}(\alpha)},
\]
and
\[
\abs{\sigma_1(\alpha)\sigma_{n+2}(\alpha)}<1.
\]
We are left to prove that $\abs{\sigma_{2n+1}(\alpha)}<\abs{\sigma_{2n}(\alpha)}$. In the following $c_1,c_2,c_3$ denote constants only depending on $n$ and $q$ (in particular they are independent on $k$).

Let us first assume that $q<n-1$. Then $\sigma_{2n}$ and $\sigma_{2n+1}$ are real embeddings, and we have
\begin{align}\label{eq:Dirichlet1}
\frac{1}{B}\sum_{i=1}^{2n} x_i &> \sum_{i=1}^{n-q+1} (k-i) + \sum_{i=n-q+2}^{n+1}2(k-i) \nonumber\\ &- \sum_{i=n+2}^{n+q+1} 2(i-n+k-1) - \sum_{i=n+q+2}^{2n} (i-n+k-1)\nonumber  \\
 &=(n-q+1)k + 2qk -2qk - (n-q-1)k + c_1 = 2k+c_1.
\end{align}
Therefore, for $k$ large enough, we have $y=-\sum_{i=1}^{2n}x_i < B(-k-n) < x_{2n}$. This implies $\abs{\sigma_{2n+1}(\alpha)}<\abs{\sigma_{2n}(\alpha)}$, which proves the lemma in the case $q<n-1$.

If $q=n-1$, then $\sigma_{2n}$ is complex and $\sigma_{2n+1}$ is real. Morover, we have
\begin{align}
\frac{1}{B}\sum_{i=1}^{2n} x_i &> \sum_{i=1}^{2} (k-i) + \sum_{i=3}^{n+1}2(k-i) - \sum_{i=n+2}^{2n} 2(i-n+k-1) \nonumber \\
& =2k+2(n-1)k-2(n-1)k+c_2 = 2k +c_2.
\end{align}
Hence, for $k$ large enough we have $y=-\sum_{i=1}^{2n}x_i < B(-k-n) < \frac{1}{2} x_{2n}$. This implies $\abs{\sigma_{2n+1}(\alpha)}<\abs{\sigma_{2n}(\alpha)}$.

Lastly, we concider the case $q=n$. Then $\sigma_{2n}$ and $\sigma_{2n+1}$ are both complex, and
\begin{align}
\frac{1}{B}\sum_{i=1}^{2n} x_i &>  (k-1) + \sum_{i=2}^{n+1} 2(k-i) - \sum_{i=n+2}^{2n} 2(i-n+k-1) \nonumber \\
& =2k+2nk-2(n-1)k+c_3 = 3k +c_3.
\end{align}
Again, for $k$ sufficiently large we have $\frac{1}{2}y=-\frac{1}{2}\sum_{i=1}^{2n}x_i < B(-k-n-1) < \frac{1}{2} x_{2n}$. This implies also in this last case that $\abs{\sigma_{2n+1}(\alpha)}<\abs{\sigma_{2n}(\alpha)}$, which proves the lemma.
\end{proof}

\begin{lemma}\label{lem:Dn}
Let $K$ be a number field of degree $d$ with Galois closure $K_{\mathcal{G}}$ and $\Gal(K_{\mathcal{G}}/\Q)\cong D_d$. Let $\sigma \in \Gal(K_{\mathcal{G}}/\Q)$ be of order $d$, then 
\begin{enumerate}[label=(\roman*)]
\item $K$ is the fixed field of $\langle \tau \rangle$ for some reflection $\tau \in \Gal(K_{\mathcal{G}}/\Q)\cong D_d$,
\item the elements from $\langle \sigma \rangle$ restricted to $K$ form a full set of pairwise distinct embeddings of $K$ into $\mathbb{C}$, and
\item for each $k\in\Z$ and each $\alpha\in K$ we have $\tau\sigma^k(\alpha)=\sigma^{-k}(\alpha)$, for $\tau$ from statement (i).
\end{enumerate}
\end{lemma}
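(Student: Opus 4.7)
All three parts follow from the Galois correspondence applied to the presentation $D_d = \langle \sigma, \tau \mid \sigma^d = \tau^2 = 1,\ \tau\sigma\tau^{-1} = \sigma^{-1}\rangle$. Since $[K_{\mathcal{G}}:K] = 2d/d = 2$, the subfield $K$ is the fixed field $K_{\mathcal{G}}^H$ of a subgroup $H \le D_d$ of order $2$. The whole lemma reduces to identifying which such $H$ can arise and then book-keeping.

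For (i), every subgroup of order $2$ in $D_d$ is generated by an involution, which in $D_d$ is either a reflection $\sigma^j\tau$ or, when $d$ is even, the central rotation $\sigma^{d/2}$. I would rule out the second possibility by the observation that $\langle\sigma^{d/2}\rangle$ is normal in $D_d$; hence if $H$ were this subgroup, the fixed field $K$ would itself be Galois over $\Q$, and the Galois closure of $K$ would be $K$ rather than $K_{\mathcal{G}}$, contradicting $[K_{\mathcal{G}}:\Q] = 2d > d$. So $H = \langle\tau'\rangle$ for some reflection, which we rename $\tau$.

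For (ii), the embeddings $K \hookrightarrow \C$ correspond (via restriction) to the left cosets of $H$ in $D_d$. The plan is to show that $\{\sigma^k\}_{k=0}^{d-1}$ represents $d$ pairwise distinct cosets: if $\sigma^i H = \sigma^j H$ then $\sigma^{i-j} \in \langle\tau\rangle$, and since a reflection never lies in the rotation subgroup we have $\langle\sigma\rangle \cap \langle\tau\rangle = \{1\}$, forcing $i \equiv j \pmod d$. As $[D_d:H] = d$, these cosets exhaust all embeddings of $K$. For (iii), the defining relation of $D_d$ yields $\tau\sigma^k = \sigma^{-k}\tau$ for every $k \in \Z$ by induction; applying both sides to $\alpha \in K$ and using that $\tau \in \Gal(K_{\mathcal{G}}/K)$ fixes $\alpha$ pointwise by (i), I obtain $\tau\sigma^k(\alpha) = \sigma^{-k}(\tau(\alpha)) = \sigma^{-k}(\alpha)$.

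The only point requiring anything beyond unwinding definitions is the exclusion of the central involution in (i); once that is done, the rest is automatic, so I do not anticipate any genuine obstacle.
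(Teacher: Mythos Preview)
Your proposal is correct and follows essentially the same line as the paper's proof: both identify $K$ as the fixed field of an order-$2$ subgroup, rule out the normal subgroup $\langle\sigma^{d/2}\rangle$ (when $d$ is even) via the observation that $K/\Q$ would then be Galois, and deduce (ii) and (iii) from $\langle\sigma\rangle\cap\langle\tau\rangle=\{1\}$ and the dihedral relation $\tau\sigma^k=\sigma^{-k}\tau$. Your version is slightly more explicit about the coset correspondence in (ii), but the substance is identical.
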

\begin{proof}
Since $[K_\mathcal{G}:K]=2$, $K$ is the fixed field of a subgroup of order $2$. Moreover, it cannot be fixed by the normal subgroup $\langle \sigma^{\nicefrac{d}{2}}\rangle$, since then $K/\Q$ would be Galois which is not the case. All other subgroups of order $2$ are generated by a reflection, proving (i). 

We have just seen that no element from $\sigma^k$, $k\in\{1,\ldots,d-1\}$ fixes $K$. Hence, these elements are precisely the non trivial embeddings of $K$ into $\C$, which settles (ii). By the group structure of $D_d$, we have $\tau\sigma^k=\sigma^{-k}\tau$. Since $\tau$ fixes all elements in $K$, statement (iii) follows.
\end{proof}

\begin{lemma}\label{lem:Fn}
Let $K$ be a number field of degree 5 with Galois closure $K_{\mathcal{G}}$ and $\Gal(K_{\mathcal{G}}/\Q)\cong F_5$. Let $\sigma \in \Gal(K_{\mathcal{G}}/\Q)$ be of order 5, then 
\begin{enumerate}[label=(\roman*)]
\item $K$ is the fixed field of $\langle \tau \rangle$ for some element $\tau$ in  $\Gal(K_{\mathcal{G}}/\Q)$ of order 4, such that $\tau\sigma=\sigma^2\tau$.
\item The elements from $\langle \sigma \rangle$ restricted to $K$ form a full set of pairwise distinct embeddings of $K$ into $\mathbb{C}$.
\item For $k,j\in\Z$, and each $\alpha\in K$ we have $\tau^k\sigma^j(\alpha)=\sigma^{2kj}(\alpha)$, for $\tau$ from (i).
\end{enumerate}
\end{lemma}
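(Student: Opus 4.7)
The plan is to mirror the proof of Lemma \ref{lem:Dn}, replacing the dihedral structure with that of $F_5$. The backbone is the Galois correspondence applied to the subgroup structure of $F_5$, together with iterated use of the defining relation $\tau\sigma\tau^{-1}=\sigma^2$.

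For (i), I would identify $K$ with the fixed field of a subgroup $H\le F_5$ of order $[K_{\mathcal{G}}:K]=4$. The first substantive step is to show that every order-$4$ subgroup of $F_5$ is cyclic: since $\langle\sigma\rangle$ is the unique (hence normal) Sylow-$5$ subgroup, $H$ is a Sylow-$2$ subgroup, and one can rule out the Klein four possibility by a direct check that distinct elements of order $2$ in $F_5$ do not commute (every order-$2$ element has the form $\tau^2\sigma^i$, and $\tau^2\sigma = \sigma^4\tau^2$ gives $\tau^2\sigma^i\cdot\tau^2\sigma^j = \sigma^{4i+j}$, which forces $i\equiv j\pmod 5$ for commutativity). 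Thus $H=\langle\tau_0\rangle$ for some element $\tau_0$ of order $4$. The second step is to observe that conjugation by $\tau_0$ induces an automorphism of $\langle\sigma\rangle\cong\Z/5\Z$ of order $4$, and there are only two such automorphisms, $\sigma\mapsto\sigma^2$ and $\sigma\mapsto\sigma^3$, which are mutual inverses. Therefore either $\tau_0$ or $\tau_0^{-1}$ satisfies $\tau\sigma=\sigma^2\tau$; either choice generates the same subgroup $H$, yielding the $\tau$ required by (i).

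Part (ii) is then a direct consequence of the Galois correspondence: the five embeddings of $K$ into $\C$ are indexed by the left cosets of $\langle\tau\rangle$ in $F_5$, and the classes $\sigma^i\langle\tau\rangle$ for $i=0,\ldots,4$ are pairwise distinct because $\langle\sigma\rangle\cap\langle\tau\rangle=\{1\}$ (their orders, $5$ and $4$, are coprime). Since $K$ has exactly five embeddings, these exhaust them. For (iii), an induction on $k$ starting from $\tau\sigma=\sigma^2\tau$ yields $\tau^k\sigma^j = \sigma^{2^k j}\tau^k$, and since $\tau^k$ fixes $K$ we obtain $\tau^k\sigma^j(\alpha)=\sigma^{2^k j}(\alpha)$ for all $\alpha\in K$ (so the exponent in the displayed formula should be read as $2^k j$; with $2kj$ the identity already fails at $k=0$).

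The main obstacle I anticipate is the structural analysis in (i): ruling out a Klein four subgroup in $F_5$ and then passing between $\tau_0$ and $\tau_0^{-1}$ to land on the prescribed conjugation relation. Once (i) is in place, parts (ii) and (iii) reduce to short group-theoretic computations.
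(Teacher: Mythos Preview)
Your proposal is correct and follows essentially the same route as the paper: Galois correspondence identifies $K$ as the fixed field of an order-$4$ subgroup, the Klein four case is excluded (the paper simply asserts $C_2^2\not\le F_5$, while you supply the explicit commutator check), and then one passes from $\tau_0$ to $\tau_0^{-1}$ if necessary to pin down the relation $\tau\sigma=\sigma^2\tau$; parts (ii) and (iii) are the same short computations. Your remark on (iii) is also correct: the induction doubles the exponent at each step, so the formula should read $\tau^k\sigma^j(\alpha)=\sigma^{2^k j}(\alpha)$, and indeed the paper's own chain $\sigma^{2j},\sigma^{4j},\ldots$ and its later uses (e.g.\ $\tau^2\sigma(\alpha)=\sigma^4(\alpha)$ in Lemma~\ref{lem:D5signature(1,2)}) are consistent with $2^k j$ rather than $2kj$.
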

\begin{proof}
Since $[K_\mathcal{G}:K]=4$, $K$ is the fixed field of a subgroup of order $4$. Since $C_2^2$ is not a subgroup of $F_5$, $K$ must be the fixed field of $\langle \tau \rangle=\langle \tau^{-1} \rangle$ for some $\tau \in \Gal(K_{\mathcal{G}}/K)$ of order $4$. Now, either $\tau \sigma \tau^{-1} = \sigma^2 $ or $\tau^{-1} \sigma \tau = \sigma^2$, proving (i).

Therefore, no element from $\sigma^i$, $i\in\{1,\ldots,4\}$ fixes $K$. Hence, these are distinct embeddings of $K$ into $\C$, and so the elements of $\langle \sigma \rangle$ are precisely all the embeddings of $K$. This gives us (ii). Now, since $\tau\sigma=\sigma^2\tau$, we have 
\[
 \tau^k\sigma^j(\alpha)=\tau^{k-1}\sigma^{2j}\tau(\alpha)  = \tau^{k-2}\sigma^{4j}\tau^2(\alpha)
 = \tau^{k-k}\sigma^{2kj}\tau^k(\alpha) =\sigma^{2kj}(\alpha).
\]
This completes the proof.
\end{proof}

\section{Classification of quartic number fields}
In this section $K$ will always denote a quartic number field. If $\alpha \in K$ is not a generator of $K$, then the degree of $\alpha$ is at most $2$ and hence $\alpha$ is preperiodic. Therefore, we will only consider the orbits of generators of $K$. We break down our analysis based on the signature of the quartic field $K$, starting with totally imaginary extensions.

\subsection{Totally imaginary fields}
Let $K$ be a totally imaginary quartic number field, and let $\alpha$ be a generator of $K$. Moreover, let $\ell\in\mathbb{N}$ be the leading coefficient of the minimal polynomial of $\alpha$ (note, that we do not assume that $\alpha$ is an algebraic integer). Denote the Galois conjugates of $\alpha$ by $\alpha_1$, $\alpha_2$, $\alpha_3$, $\alpha_4$, such that $\vert \alpha_1 \vert = \vert \alpha_2 \vert > \vert \alpha_3\vert =\vert \alpha_4\vert$. Then the Mahler measure of $\alpha$ is one of the elements $\pm\ell$, $\pm\ell\alpha_1\alpha_2$, $\pm\ell\alpha_1\alpha_2\alpha_3\alpha_4$.

Since $\ell$ and  $\ell\alpha_1\alpha_2\alpha_3\alpha_4=:n$ are in $\Z$, our $\alpha$ is preperiodic, whenever $M(\alpha)\neq \pm\ell\alpha_1\alpha_2$. Hence we will assume
\[
\vert \alpha_1 \vert = \vert \alpha_2 \vert > 1 > \vert \alpha_3\vert =\vert \alpha_4\vert.
\]
The Galois conjugates of $\ell\alpha_1\alpha_2$ all lie in the set
\[
\{\ell\alpha_1\alpha_2,\ell\alpha_1\alpha_3,\ell\alpha_1\alpha_4,\ell\alpha_2\alpha_3,\ell\alpha_2\alpha_4,\ell\alpha_3\alpha_4\}.
\]
Before we proceed, we remark that $\ell\alpha_1\alpha_2$ is a Perron integer.

\begin{lemma}\label{lem:totimaginary}
If $\alpha_1\alpha_3=\alpha_2\alpha_4$ (resp. $\alpha_1\alpha_4=\alpha_2\alpha_3$), then $\alpha_1\alpha_3$ (resp. $\alpha_1\alpha_4$) is not a Galois conjugate of $\alpha_1\alpha_2$.
\end{lemma}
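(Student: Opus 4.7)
The approach is to suppose for contradiction that $\alpha_1\alpha_3$ is a Galois conjugate of $\alpha_1\alpha_2$ and derive a contradiction using injectivity of field automorphisms together with a magnitude inequality forced by the totally imaginary signature. Only the first statement needs a proof: the parenthetical case follows at once by relabeling $\alpha_3\leftrightarrow\alpha_4$, which preserves every hypothesis.

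First I would pin down the complex conjugation structure. Since $K$ has no real embedding and $\vert\alpha_1\vert=\vert\alpha_2\vert>\vert\alpha_3\vert=\vert\alpha_4\vert$, complex conjugation must fix each of the two sets $\{\alpha_1,\alpha_2\}$ and $\{\alpha_3,\alpha_4\}$, so $\alpha_2=\bar{\alpha}_1$ and $\alpha_4=\bar{\alpha}_3$. Consequently $\alpha_1\alpha_2=\vert\alpha_1\vert^2$ and $\alpha_3\alpha_4=\vert\alpha_3\vert^2$ are positive real numbers satisfying
\[
\alpha_1\alpha_2 \;=\; \vert\alpha_1\vert^2 \;>\; 1 \;>\; \vert\alpha_3\vert^2 \;=\; \alpha_3\alpha_4;
\]
in particular $\alpha_1\alpha_2\neq \alpha_3\alpha_4$. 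This single numerical inequality is the only quantitative input the argument will use.

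Now assume $\sigma\in\Gal(\overline{\Q}/\Q)$ satisfies $\sigma(\alpha_1\alpha_2)=\alpha_1\alpha_3$. The equation $\sigma(\alpha_1)\sigma(\alpha_2)=\alpha_1\alpha_3$ forces the unordered pair $\{\sigma(\alpha_1),\sigma(\alpha_2)\}$ to be one of the pairs $\{\alpha_i,\alpha_j\}$ whose product equals $\alpha_1\alpha_3$. I would check that, because the four conjugates $\alpha_i$ are pairwise distinct, the only such pairs are $\{\alpha_1,\alpha_3\}$ itself and $\{\alpha_2,\alpha_4\}$, the latter precisely by the hypothesis $\alpha_1\alpha_3=\alpha_2\alpha_4$. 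Whichever of these two pairs is realized, the complementary pair $\{\sigma(\alpha_3),\sigma(\alpha_4)\}$ equals the other one, so $\sigma(\alpha_3\alpha_4)$ again equals either $\alpha_1\alpha_3$ or $\alpha_2\alpha_4=\alpha_1\alpha_3$.

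Thus in both cases $\sigma(\alpha_1\alpha_2)=\sigma(\alpha_3\alpha_4)$, and injectivity of the field automorphism $\sigma$ yields $\alpha_1\alpha_2=\alpha_3\alpha_4$, contradicting the strict inequality from the first paragraph. I do not foresee any real obstacle here; the only point that needs a moment of care is the enumeration of pairs $\{\alpha_i,\alpha_j\}$ with product $\alpha_1\alpha_3$, which is immediate from distinctness of the four conjugates.
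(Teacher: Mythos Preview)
Your proof is correct but takes a genuinely different route from the paper's. The paper observes in one line that the hypothesis $\alpha_1\alpha_3=\alpha_2\alpha_4$ gives $(\alpha_1\alpha_3)^2=\alpha_1\alpha_2\alpha_3\alpha_4\in\Q$, so $\alpha_1\alpha_3$ has degree at most $2$ over $\Q$; its only possible Galois conjugates are then $\pm\alpha_1\alpha_3$, both of absolute value $\lvert\alpha_1\rvert\,\lvert\alpha_3\rvert<\lvert\alpha_1\rvert^2=\alpha_1\alpha_2$, contradicting the Perron property of $\alpha_1\alpha_2$. Your argument is instead combinatorial: you track the permutation of the four roots induced by a putative $\sigma$ with $\sigma(\alpha_1\alpha_2)=\alpha_1\alpha_3$, show that $\{\sigma(\alpha_1),\sigma(\alpha_2)\}$ must be one of the two pairs $\{\alpha_1,\alpha_3\}$, $\{\alpha_2,\alpha_4\}$, and deduce $\sigma(\alpha_3\alpha_4)=\sigma(\alpha_1\alpha_2)$, contradicting injectivity via $\alpha_1\alpha_2\neq\alpha_3\alpha_4$. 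The paper's route is shorter and exploits the ``hidden'' rationality of $(\alpha_1\alpha_3)^2$; yours is slightly longer but entirely elementary, avoiding any appeal to Perron numbers and relying only on distinctness of the $\alpha_i$ and the single inequality $\lvert\alpha_1\rvert^2>\lvert\alpha_3\rvert^2$.
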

\begin{proof}
Assume $\alpha_1\alpha_3=\alpha_2\alpha_4$, then $(\alpha_1\alpha_3)^2=\alpha_1\alpha_2\alpha_3\alpha_4 \in \Q$. Therefore, $\alpha_1\alpha_3$ cannot be a Galois conjugate of the Perron number $\alpha_1\alpha_2$. The same argument applies if $\alpha_1\alpha_4=\alpha_2\alpha_3$.
\end{proof}

If all Galois conjugates of $\ell\alpha_1\alpha_2$ have absolute value $\geq 1$, then $\ell\alpha_1\alpha_2$ -- and hence $\alpha$ -- is surely preperiodic. So we assume, that $\ell\alpha_3\alpha_4 <1$.

Since in any case we have $\alpha_1\alpha_3=\overline{\alpha_2\alpha_4}$, it follows by Lemma \ref{lem:totimaginary} that
\begin{align*}
 & M^{2}(\alpha)=M(\ell\alpha_1\alpha_2) \\ \in &\{\pm\ell\alpha_1\alpha_2, \pm\ell^3 \alpha_1\alpha_2\alpha_1\alpha_3\alpha_2\alpha_4, \pm\ell^5\alpha_1\alpha_2\alpha_1\alpha_3\alpha_1\alpha_4\alpha_2\alpha_4\alpha_2\alpha_3\}\\
 = &\{\pm\ell\alpha_1\alpha_2, \pm\ell^2 n \alpha_1\alpha_2, \pm\ell^3 n^2 \alpha_1\alpha_2\}
\end{align*}

If $\vert \ell n\vert =1$, which is precisely the case if $\alpha$ is an algebraic unit, then $M^{2}(\alpha)=M(\alpha)$ and $\alpha$ is preperiodic. Assuming that $\vert \ell n\vert >1$, and $M^{2}(\alpha)\neq M(\alpha)$, then the possible Galois conjugates of $M^{2}(\alpha)$ are
\[
\{\pm \ell^i n^j\alpha_1\alpha_2,\pm\ell^i n^j\alpha_1\alpha_3,\pm \ell^i n^j\alpha_1\alpha_4, \pm\ell^i n^j\alpha_2\alpha_3, \pm \ell^i n^j\alpha_3\alpha_4\}, 
\]
with $(i,j)\in \{(2,1),(3,2)\}$. As before, either $\vert \ell^i n^j\alpha_3\alpha_4\vert >1$, and we are done, or $M^{3}(\alpha)$ is an element from
\[
\{\pm \ell^{3i-1}n^{3j+1}\alpha_1\alpha_2,\pm \ell^{5i-2}n^{5j+2}\alpha_1\alpha_2 \}.
\]
Since each iteration increases the absolute value of the rational integer in front of  the $\alpha_1\alpha_2$, after finitely many iterations, we see that all Galois conjugates of some $M^{k}(\alpha)$ lie outside the unit circle. Hence, $\alpha$ is preperiodic in all cases. We conclude:

\begin{prop}\label{prop:totimaginary}
If $K/\Q$ is a totally imaginary quartic number field, then all elements in $K$ are preperiodic under iteration of the Mahler measure.
\end{prop}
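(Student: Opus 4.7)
The plan is to reduce immediately to the case where $\alpha$ is a generator of $K$ (otherwise $[\Q(\alpha):\Q]\le 2$ and the orbit is trivially finite), and then exploit the rigidity of being a quartic field with only two distinct archimedean absolute values among the conjugates. So let $\alpha$ be a generator with conjugates $\alpha_1,\alpha_2,\alpha_3,\alpha_4$ labeled so that $|\alpha_1|=|\alpha_2|\ge|\alpha_3|=|\alpha_4|$, and let $\ell$ be the leading coefficient of its minimal polynomial. Then $M(\alpha)$ is one of $\pm\ell$, $\pm\ell\alpha_1\alpha_2$, or $\pm\ell\alpha_1\alpha_2\alpha_3\alpha_4$; the first and third lie in $\Z$ and hence give preperiodic orbits, so the only case requiring work is $M(\alpha)=\pm\ell\alpha_1\alpha_2$, which forces $|\alpha_1|>1>|\alpha_3|$.

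Next I would bound the possibilities for $M^2(\alpha)$. The Galois conjugates of the Perron integer $\ell\alpha_1\alpha_2$ lie in the six-element set $\{\ell\alpha_i\alpha_j\}_{i<j}$, and the complex conjugation relation $\alpha_1\alpha_3=\overline{\alpha_2\alpha_4}$ together with Lemma~\ref{lem:totimaginary} rules out the cross terms where coincidences like $\alpha_1\alpha_3=\alpha_2\alpha_4$ would produce elements whose square is rational, since these cannot be conjugates of a Perron number. Writing $n=\ell\alpha_1\alpha_2\alpha_3\alpha_4\in\Z$, a direct enumeration then shows that
\[
M^2(\alpha)\in\{\pm\ell\alpha_1\alpha_2,\ \pm\ell^2 n\,\alpha_1\alpha_2,\ \pm\ell^3 n^2\alpha_1\alpha_2\}.
\]
If $|\ell n|=1$ (i.e.\ $\alpha$ is a unit) then $M^2(\alpha)=M(\alpha)$ and we are done; otherwise I would observe that each iteration multiplies the rational integer coefficient in front of $\alpha_1\alpha_2$ by a factor of absolute value at least $|\ell n|^c$ for a fixed positive constant $c$. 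Iterating the same analysis one more step, the conjugates of $M^3(\alpha)$ are controlled in the same way, yielding $M^3(\alpha)\in\{\pm\ell^{3i-1}n^{3j+1}\alpha_1\alpha_2,\ \pm\ell^{5i-2}n^{5j+2}\alpha_1\alpha_2\}$ for $(i,j)\in\{(2,1),(3,2)\}$.

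To finish, I would show that this growth in the integer prefactor eventually forces even the smallest conjugate $|\ell^a n^b\alpha_3\alpha_4|$ to exceed $1$. Since $|\alpha_3\alpha_4|$ is a fixed positive real and $|\ell n|\ge 2$, after finitely many steps all conjugates of some iterate $M^k(\alpha)$ lie outside the unit disk; at that point $M^{k+1}(\alpha)$ is (up to sign) the product of all conjugates times the leading coefficient, which is a rational integer, hence preperiodic. Thus $\alpha$ itself is preperiodic.

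The main subtlety I anticipate is the bookkeeping of which elements of $\{\ell\alpha_i\alpha_j\}$ actually occur as Galois conjugates of $M(\alpha)$ — the combination of the Perron property (forcing a strictly largest conjugate) and Lemma~\ref{lem:totimaginary} (eliminating the degenerate pairings) is what makes the finite list of cases manageable. Once this list is in hand, the rest is the monotone-growth argument on the $\Z$-coefficient in front of $\alpha_1\alpha_2$, which is quantitative but essentially routine.
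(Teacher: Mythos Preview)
Your proposal is correct and follows essentially the same argument as the paper: reduce to generators, use the pairing of conjugates in a totally imaginary quartic to see that $M(\alpha)$ is either rational or $\pm\ell\alpha_1\alpha_2$, invoke Lemma~\ref{lem:totimaginary} and the Perron property to list the possible values of $M^2(\alpha)$, and then observe that either $|\ell n|=1$ gives an immediate fixed point or the growing integer prefactor eventually pushes all conjugates outside the unit circle. The only cosmetic difference is that the paper explicitly isolates the intermediate assumption $|\ell\alpha_3\alpha_4|<1$ before computing $M^2(\alpha)$, which you leave implicit.
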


\begin{rmk}
Note that under our assumptions it is possible that the degree of $M(\alpha)$ is larger than four. For instance, the Mahler measure of any root of $x^4 -x +1$ is of degree six. Since no root of this polynomial is real, we know that it is a preperiodic point under iteration of $M$.
\end{rmk}

\subsection{Fields of signature $(2,1)$}
Let $K$ have two real embeddings and two complex conjugate embeddings.  Denote by $K_{\mathcal{G}}$ the Galois closure of $K/\Q$. Since $K$ is neither totally real, nor totally imaginary, it cannot be a Galois extension of $\Q$. Hence, the only possibilities for $\Gal(K_{\mathcal{G}}/\Q)$ are $S_4$, $A_4$, or $D_4$. 

Let us first assume that $\Gal(K^{\mathcal{G}}/\Q)$ is isomorphic to $S_4$ or $A_4$. Let $\alpha \in K$ be an algebraic unit such that the Galois conjugates $\alpha_1,\ldots,\alpha_4$ of $K$ are given in the order
\[
\abs{\alpha_1} > \abs{\alpha_2} >1 >\abs{\alpha_3} = \abs{\alpha_4}.
\]
Note that $\alpha_4=\overline{\alpha_3}$. We see that $M(\alpha)= \pm \alpha_1\alpha_2$. Since we assume that $\Gal(K_{\mathcal{G}}/\Q)$ contains $A_4$, the Galois orbit of $\alpha_1\alpha_2$ is as large as possible; i.e. it is equal to the set
\[
\{ \alpha_1\alpha_2, \alpha_1\alpha_3 , \alpha_1\alpha_4, \alpha_2\alpha_3, \alpha_2\alpha_4, \alpha_3\alpha_4\}.
\]
Since $\abs{\alpha_1\alpha_3}\cdot \abs{\alpha_2\alpha_4} = \abs{N(\alpha)}=1$ and $\abs{\alpha_1\alpha_3}> \abs{\alpha_2 \alpha_4}$, it follows that $\abs{\alpha_1\alpha_3} = \abs{\alpha_1\alpha_4}>1$ and $\abs{\alpha_2\alpha_3}=\abs{\alpha_2\alpha_4}<1$. Therefore
\[
M^{2}(\alpha)=M(\alpha_1\alpha_2)= \pm \alpha_1\alpha_2\alpha_1\alpha_3\alpha_1\alpha_4=\pm \alpha_1^2 N(\alpha)=\alpha_1^2.
\]
Since $\alpha$ is torsion-free, it follows that $\alpha$ is a wandering point by Lemma \ref{lem:torsionfree}.

We continue with the case where $\Gal(K_{\mathcal{G}}/\Q)$ is isomorphic to $D_4$. Since every subgroup of $D_4$ of order two is contained in a subgroup of order four, Galois theory tells us that $K$ contains a quadratic subfield. Hence, $K$ is a quadratic extension of a quadratic extension of $\Q$, and therefore generated by an element of the form $\sqrt{a+b\sqrt{d}}$ with $a,b\in\Q$ and $d\in \Z$ squarefree.

We are interested in the Mahler measure of elements in $K$. Since the Mahler measure is invariant under taking Galois conjugates, and $K$ is of signature $(1,2)$, we may assume that $K$ is real. This means, we can assume that $d\in\mathbb{N}$ is squarefree and $a+b\sqrt{d} >0$. Applying once more our assumption on the signature of $K$, it follows that we can assume $K=\Q(\sqrt{a+b\sqrt{d}})$, with $d\in\mathbb{N}$ squarefree, $a,b\in\Q$ such that $a+b\sqrt{d}>0$ and $a-b\sqrt{d}<0$.

Let $\alpha\in K$ be of degree four. Then there are $r,s,t,u\in\Q$ such that the Galois conjugates of $\alpha$ are
\begin{align*}
\alpha_1 & = r+s\sqrt{a+b\sqrt{d}}+t(a+b\sqrt{d})+u\left(\sqrt{a+b\sqrt{d}}\right)^3 \\
\alpha_2 & = r-s\sqrt{a+b\sqrt{d}}+t(a+b\sqrt{d})-u\left(\sqrt{a+b\sqrt{d}}\right)^3 \\
\alpha_3 & = r+s\sqrt{a-b\sqrt{d}}+t(a-b\sqrt{d})+u\left(\sqrt{a-b\sqrt{d}}\right)^3 \\
\alpha_4 & = r-s\sqrt{a-b\sqrt{d}}+t(a-b\sqrt{d})-u\left(\sqrt{a-b\sqrt{d}}\right)^3.
\end{align*}
Let $\ell\in\mathbb{N}$ denote the leading coefficient of the minimal polynomial of $\alpha$. Then
\[
M(\alpha)\in\{\pm \ell \alpha_1,\pm \ell\alpha_2, \pm\ell\alpha_1\alpha_2,\pm \ell\alpha_1\alpha_3\alpha_4, \pm\ell\alpha_2\alpha_3\alpha_4,\pm\ell\alpha_3\alpha_4, \pm\ell\alpha_1\alpha_2\alpha_3\alpha_4\}.
\]
Note, that by assumption $\alpha_3=\overline{\alpha_4}$. Moreover, $\alpha_3\alpha_4,\alpha_1\alpha_2\in\Q(\sqrt{d})$, and $K=\Q(\alpha_1)=\Q(\alpha_2)$. Hence, in any case we have $M(\alpha)\in K$. Therefore, after replacing $\alpha$ by $M(\alpha)$, we may assume that $\alpha=\alpha_1$ is an algebraic integer in $K$ of absolute value $>1$. Again we assume that $\alpha$ is of degree $4$, since otherwise it is surely preperiodic. If $\alpha_2,\alpha_3,\alpha_4$ lie all outside (or all inside) the closed unit circle, $\alpha$ is surely preperiodic. So we assume that one of $\alpha_2,\alpha_3,\alpha_4$ lies inside, and another lies outside the unit circle. Then
\[
M(\alpha)\in\{\pm \alpha_1\alpha_2, \pm \alpha_1\alpha_3\alpha_4\}.
\]
If $M(\alpha)=\pm \alpha_1\alpha_2$, then $M(\alpha) \in \Q(\sqrt{d})$, and $\alpha$ is preperiodic. So we proceed with $M(\alpha)=\pm \alpha_1\alpha_3\alpha_4=\pm\frac{N(\alpha)}{\alpha_2}\in K$. As before we see, that if the number of Galois conjugates of $\frac{N(\alpha)}{\alpha_2}$ outside the unit circle is equal to $1$, $2$ or $4$, then $\alpha$ is preperiodic. But if this number is equal to $3$, then $\alpha$ satisfies the assumption of \cite[Proposition 1]{FPZ2020}, and hence it is preperiodic in this case as well.

We summarize the above in a proposition:
\begin{prop}\label{prop:signature(2,1)}
Let $K$ be a quartic number field of signature $(2,1)$. Then all $\alpha \in K$ are preperiodic under iteration of the Mahler measure, if and only if the Galois group of the Galois closure of $K$ over $\Q$ is isomorphic to the dihedral group $D_4$. In all other cases $K$ contains wandering algebraic units.
\end{prop}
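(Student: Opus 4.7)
The plan is to split into the three possible Galois group cases for a non-Galois quartic of signature $(2,1)$, which by the classification of transitive subgroups of $S_4$ are $S_4$, $A_4$, and $D_4$. I want to exhibit wandering algebraic units when the group is $S_4$ or $A_4$, and prove preperiodicity of every element of $K$ in the $D_4$ case.

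When $\Gal(K_\mathcal{G}/\Q) \in \{S_4, A_4\}$, I would invoke Dirichlet's unit theorem to produce an algebraic unit $\alpha\in K$ whose conjugates satisfy $\abs{\alpha_1}>\abs{\alpha_2}>1>\abs{\alpha_3}=\abs{\alpha_4}$, with $\alpha_4=\overline{\alpha_3}$. Since $\alpha$ is a unit, $M(\alpha)=\pm\alpha_1\alpha_2$. Because the Galois group contains $A_4$, which acts $2$-transitively on $\{\alpha_1,\ldots,\alpha_4\}$, the six products $\alpha_i\alpha_j$ form a single Galois orbit and hence are exactly the conjugates of $\alpha_1\alpha_2$. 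Combining $\abs{\alpha_1\alpha_3}\cdot\abs{\alpha_2\alpha_4}=\abs{N(\alpha)}=1$ with $\abs{\alpha_1}>\abs{\alpha_2}$ and $\abs{\alpha_3}=\abs{\alpha_4}$ pins down that the three conjugates lying outside the closed unit disk are exactly $\alpha_1\alpha_2,\alpha_1\alpha_3,\alpha_1\alpha_4$; their product equals $\alpha_1^3\cdot\alpha_2\alpha_3\alpha_4=\alpha_1^2\cdot N(\alpha)=\pm\alpha_1^2$, giving $M^2(\alpha)=\alpha_1^2$. Labeling so $\alpha=\alpha_1$, torsion-freeness is automatic: any $\sigma$ with $\sigma(\alpha)\neq\alpha$ sends $\alpha_1$ to one of $\alpha_2,\alpha_3,\alpha_4$, and the strict inequalities force $\abs{\alpha/\sigma(\alpha)}\neq 1$. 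Lemma~\ref{lem:torsionfree} with $k=n=2$ then concludes that $\alpha$ is a wandering unit.

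When $\Gal(K_\mathcal{G}/\Q)\cong D_4$, the structural observation is that every order-$2$ subgroup of $D_4$ sits inside an order-$4$ subgroup, so $K$ contains a quadratic subfield $\Q(\sqrt d)$ and can be written as $K=\Q(\sqrt{a+b\sqrt d})$ with $a,b\in\Q$ and $d\in\Z$ squarefree. Using Galois-invariance of $M$, I may assume $K\subseteq\mathbb{R}$, so that $d>0$ and $a-b\sqrt d<0<a+b\sqrt d$. I would then write down the four conjugates of a generator $\alpha$ of $K$ explicitly and enumerate the possible shapes of $M(\alpha)$: the candidates $\pm\ell,\pm\ell\alpha_1\alpha_2,\pm\ell\alpha_3\alpha_4,\pm\ell\alpha_1\alpha_2\alpha_3\alpha_4$ all land in $\Q(\sqrt d)$ or in $\Q$, hence are preperiodic by degree. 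For the remaining candidates $M(\alpha)$ stays in $K$, so after replacing $\alpha$ by $M(\alpha)$ I may assume $\alpha\in\mathcal{O}_K$ is of degree $4$ with absolute value $>1$, and the surviving cases reduce to $M(\alpha)\in\{\pm\alpha_1\alpha_2,\pm\alpha_1\alpha_3\alpha_4\}$. The first lies in the quadratic subfield; for the second, $M(\alpha)=\pm N(\alpha)/\alpha_2$, and a case split on how many Galois conjugates of $N(\alpha)/\alpha_2$ lie outside the unit circle reduces either to a direct subfield/signature argument or, in the case of exactly three conjugates outside, to the hypothesis of Proposition~1 of~\cite{FPZ2020}.

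The main obstacle is the $D_4$ case: the limited transitivity of the Galois action prevents the clean pure-power identity $M^2(\alpha)=\alpha^2$ that worked in the $S_4/A_4$ setting, and one must instead chase through the several possible shapes of $M(\alpha)$ and exploit at each stage either that the quadratic subfield $\Q(\sqrt d)$ captures the iterate or that the distribution of conjugates falls under an already-proved criterion. The signature $(2,1)$ hypothesis is what lets $K$ be taken inside $\mathbb{R}$ and thereby makes the explicit parametrization by $\sqrt{a+b\sqrt d}$ available.
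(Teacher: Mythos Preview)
Your proposal is correct and follows essentially the same route as the paper: the same three-case split on $\Gal(K_\mathcal{G}/\Q)$, the identical $M^2(\alpha)=\alpha_1^2$ computation via the full six-element orbit in the $S_4/A_4$ case (together with Lemma~\ref{lem:torsionfree}), and in the $D_4$ case the same parametrization $K=\Q(\sqrt{a+b\sqrt d})$, the observation that $M(\alpha)\in K$, and the reduction of the only nontrivial shape $M(\alpha)=\pm N(\alpha)/\alpha_2$ to \cite[Proposition~1]{FPZ2020}. Your explicit justification of torsion-freeness via $\abs{\alpha_1}>\abs{\alpha_i}$ is a small addition the paper leaves implicit.
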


\subsection{Totally real fields}

\begin{lemma}\label{lem:totreal4}
Let $K$ be a totally real number field of degree $4$. Denote by $\sigma_1, \sigma_2,\sigma_3$ the non-trivial embeddings of $K$ into $\mathbb{R}$. Then there exists an algebraic unit $\alpha \in K$, such that
\begin{enumerate}[label=(\roman*)]
\item $\vert \alpha \vert > \vert \sigma_1 (\alpha) \vert >1$,
\item $\vert \sigma_2 ( \alpha) \vert, \vert \sigma_3(\alpha)\vert <1$, and
\item $\vert \alpha \sigma_i(\alpha)\vert \neq 1$ for all $i\in\{1,2,3\}$.
\end{enumerate}
\end{lemma}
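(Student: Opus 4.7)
The plan is to adapt the Dirichlet-unit construction of Lemma \ref{lem:deg5} to the case of four real embeddings. Write $\sigma_0 = \id$ so that $\{\sigma_0, \sigma_1, \sigma_2, \sigma_3\}$ is the full set of (real) embeddings of $K$, and consider the logarithmic map
\[
L : \mathcal{O}_K^* \longrightarrow \mathbb{R}^4, \qquad \alpha \longmapsto (\log|\sigma_i(\alpha)|)_{i=0}^{3}.
\]
By Dirichlet's unit theorem, $\Lambda := L(\mathcal{O}_K^*)$ is a rank-three lattice in the three-dimensional hyperplane $H = \{x_0+x_1+x_2+x_3=0\}$.

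Conditions (i) and (ii) amount to asking that $L(\alpha)$ lie in the open cone
\[
U = \{(x_0,x_1,x_2,x_3) \in H : x_0 > x_1 > 0,\ x_2 < 0,\ x_3 < 0\},
\]
while (iii) asks that $L(\alpha)$ avoid the three hyperplanes $H_i := \{x_0+x_i = 0\}$ in $H$ for $i=1,2,3$. Note that on $U$ one has $x_0+x_1 = -(x_2+x_3) > 0$, so $H_1$ is automatically avoided; only $H_2$ and $H_3$ need to be dodged.

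I would then carry out the construction in direct analogy with Lemma \ref{lem:deg5}. Fix a fundamental domain of $\Lambda$ whose generators have norm at most some $B$. For any integer $k \geq 1$, by a suitable translate of this fundamental domain one finds a unit $\alpha \in \mathcal{O}_K^*$ whose coordinates $(x_0,x_1,x_2,x_3) = L(\alpha)$ lie in a prescribed box of the shape
\[
x_0 \in (3kB,(3k{+}1)B),\quad x_1 \in ((k{+}\tfrac{2}{3})B,(k{+}1)B),\quad x_2 \in (-(k{+}\tfrac{1}{3})B,-kB),
\]
which forces $x_3 = -(x_0+x_1+x_2)$ into a negative interval contained in $(-(3k{+}2)B,\,-(3k{+}\tfrac{1}{3})B)$. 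Straightforward interval arithmetic then verifies (i) and (ii), and the fractional offsets guarantee that $x_1+x_2 > 0$ and $x_1+x_3 < 0$ (for $k\geq 1$), which is equivalent to $x_0+x_3 \neq 0$ and $x_0+x_2 \neq 0$, yielding (iii).

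The only real obstacle is condition (iii): the inequalities (i) and (ii) alone would follow from a simple scaling argument inside the open cone $U$, since $\Lambda$ is full-rank in $H$ and $U$ is an unbounded cone; but because the coordinate $x_3$ is not free and is determined by the other three, the boxes for $x_0,x_1,x_2$ must be shifted by carefully chosen fractional multiples of $B$ so that the sums $x_1+x_2$ and $x_1+x_3$ are forced to be of a definite (nonzero) sign rather than merely generically nonzero.
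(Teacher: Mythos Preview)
Your approach is the same as the paper's: use Dirichlet's unit theorem to place the log-vector of a unit in a prescribed box and read off (i)--(iii) from interval arithmetic. The paper's version is simpler---it takes $x_0>2B$, $x_1\in(0,B)$, $x_2\in(-2B,-B)$ (no parameter $k$, no fractional offsets) and checks directly that $x_3<0$, $x_0+x_2>0$, and $x_0+x_3=-x_1-x_2>0$.

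One small slip in your write-up: you fix $B$ so that the \emph{generators} of the fundamental domain have norm at most $B$, but then your boxes for $x_1$ and $x_2$ have width only $B/3$. A fundamental parallelepiped with generators of norm $\le B$ can have coordinate-width up to $3B$, so a translate of it need not fit inside a box of side $B/3$ (indeed, even the paper's width-$B$ boxes require interpreting $B$ a bit generously). The fix is trivial---replace $B$ by a sufficiently large multiple, or define $B$ as the diameter of the fundamental domain---but as written the existence of a lattice point in your box is not guaranteed.
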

\begin{proof}
Let $\mathcal{O}_K^*$ be the unit group of $K$. For $\alpha\in\mathcal{O}_K*$ we define $$L(\alpha)=(\log \vert \alpha\vert, \log\vert \sigma_1(\alpha)\vert ,\log\vert \sigma_2(\alpha)\vert,\log\vert \sigma_3(\alpha)\vert).$$ By Dirichlet's unit theorem $
L(\mathcal{O}_K^*)$ is a lattice of rank $3$ in the hyperplane $$\{(x_1,x_2,x_3,-x_1-x_2-x_3)\vert x_1,x_2,x_3\in\mathbb{R}\} \subseteq \mathbb{R}^4.$$

Let $B\in\mathbb{R}$ be larger than any vector spanning a fixed fundamental domain of the lattice $L(\mathcal{O}_K^*)$. Then there exists an element $(x_1,x_2,x_3,x_4) \in L(\mathcal{O}_K^*)$ such that $x_1 >2B$, $0<x_2<B$, and $-B > x_3 >-2B$. It follows that $x_4=-x_1-x_2-x_3 < -2B+2B=0$ and $x_1+x_3\neq 0 \neq x_1+x_4$. This means that any $\alpha\in \mathcal{O}_K^*$ such that $L(\alpha)=(x_1,x_2,x_3,x_4)$ satisfies the statements (i), (ii), (iii) from the lemma. 
\end{proof}

Let $K$ be a totally real quartic number field. Again we have to distinguish between different cases depending on the structure of $\Gal(K_{\mathcal{G}}/\Q)$, where $K_{\mathcal{G}}$ is the Galois closure of $K$ over $\mathbb{Q}$.

First we assume that $\Gal(K_{\mathcal{G}}/\Q)$ is isomorphic to $S_4$ or $A_4$. In this case the existence of wandering units follows precisely as in the case of signature $(2,1)$: By Lemma \ref{lem:totreal4} there is an algebraic unit in $K$ with Galois conjugates $\alpha_1,\alpha_2,\alpha_3,\alpha_4$ ordered as
\[
\abs{\alpha_1}\geq\abs{\alpha_2} >1 > \abs{\alpha_3} \geq \abs{\alpha_4}
\]
and $\abs{\alpha_1\alpha_4} \neq 1$. By replacing $\alpha$ with $\alpha^{-1}$ if necessary, we can assume $\abs{\alpha_1\alpha_4}>1$, which immediately implies $\abs{\alpha_2\alpha_3}<1$. By our assumption on the Galois group $\Gal(K_{\mathcal{G}}/\Q)$, we know that the degree of $M(\alpha)=\pm\alpha_1\alpha_2$ is equal to six. It follows
\[
M^{2}(\alpha)=M(\alpha_1\alpha_2)=\pm \alpha_1\alpha_2\alpha_1\alpha_3\alpha_1\alpha_4=\alpha_1^2.
\]
Since $\alpha$ is torsion-free, it follows that $\alpha$ is wandering unit under iteration of $M$.

We proceed with the assumption that $\Gal(K_{\mathcal{G}}/\Q)$ is isomorphic to $C_4$ or $D_4$. Let $\sigma\in \Gal(K_{\mathcal{G}}/\Q)$ be of order $4$. By Lemmas \ref{lem:Dn} and \ref{lem:totreal4} there is an algebraic unit $\alpha \in K$ of degree $4$ such that
\begin{equation}\label{eq:totallyrealcyclic}
\vert \alpha \vert > \vert \sigma(\alpha) \vert > 1 \quad \text{ and } \quad \vert \sigma^{2}(\alpha) \vert,\ \vert \sigma^{3}(\alpha)\vert<1 \quad \text{ and } \quad \vert \alpha\sigma^{3}(\alpha)\vert \neq 1.
\end{equation}
Now, the Mahler measure of $\alpha$ is $M(\alpha)=\pm \alpha\sigma(\alpha)$. If the Galois group is $D_4$, let $\tau$ be the reflexion fixing $K$. Then, by Lemma \ref{lem:Dn},
\[
\tau(\alpha\sigma(\alpha))=\alpha \sigma^{-1}(\alpha)=\sigma^{-1}(\alpha\sigma(\alpha)).
\]
Hence, in any case, all Galois conjugates of $\alpha\sigma(\alpha)$ are given by
\[
 \alpha\sigma(\alpha), \quad \sigma(\alpha)\sigma^{2}(\alpha), \quad \sigma^{2}(\alpha)\sigma^{3}(\alpha), \quad\text{and}\quad \sigma^{3}(\alpha)\alpha.
\]
Since $\alpha$ is an algebraic unit,
\[
\vert \sigma(\alpha)\sigma^{2}(\alpha)\vert \cdot \vert \sigma^{3}(\alpha)\alpha \vert =1, 
\]
and by \eqref{eq:totallyrealcyclic} we have $\vert \sigma^{3}(\alpha)\alpha \vert \neq 1$. It follows, that precisely one of $ \sigma^{3}(\alpha)\alpha$ and $\sigma(\alpha)\sigma^{2}(\alpha)$ lies outside the unit circle. Hence $M^{2}(\alpha)=M(\alpha\sigma(\alpha))> M(\alpha)$, and the orbit of $\alpha$ under iteration of $M$ contains at least three elements. It follows from \cite[Theorem 2]{FPZ2020}, that $\alpha$ is a wandering unit.

The last remaining case is $K_{\mathcal{G}}=K$, and $\Gal(K/\Q)$ is isomorphic to $C_2\times C_2$. This is precisely the case when $K=\Q(\sqrt{p},\sqrt{q})$ is a biquadratic field, with $p,q\in\mathbb{N}$ coprime. Since $K/\Q$ is Galois, we have $M(\alpha) \in K$ for all $\alpha\in K$. Hence, we may replace $\alpha$ by $M(\alpha)$ and assume that $\alpha>1$ is an algebraic integer in $K$. There are $a,b,c,d \in\Q$ such that the Galois conjugates of $\alpha$ are
\begin{align*}
\alpha_1 = a+b\sqrt{p} + c\sqrt{q} +d \sqrt{pq} \qquad & \alpha_2=a-b\sqrt{p} + c\sqrt{q} -d \sqrt{pq}\\
\alpha_3 = a+b\sqrt{p} - c\sqrt{q} -d \sqrt{pq} \qquad & \alpha_4=a-b\sqrt{p} - c\sqrt{q} +d \sqrt{pq}.
\end{align*}
If all of these elements, or if precisely one of these elements, lie outside the unit circle, then $\alpha$ is obviously preperiodic. If precisely two conjugates lie outside the unit circle, then $M(\alpha)$ is contained in a quadratic field, and hence $\alpha$ is preperiodic in this case as well. Lastly, we assume that precisely three conjugates of $\alpha$ lie outside the unit circle. Without loss of generality, let these conjugates be $\alpha_1,\alpha_2,\alpha_3$. Then $M(\alpha)=\pm \alpha_1\alpha_2\alpha_3 = \pm N(\alpha)/\alpha_4 \in K$. Again, $\alpha$ is preperiodic if precisely $1$, $2$, or $4$ conjugates of $M(\alpha)$ lie outside the unit circle. But if precisely $3$ conjugates of $M(\alpha)$ lie outside the unit circle, then $\alpha$ satisfies the assumptions from \cite[Proposition 1]{FPZ2020}, which implies that $\alpha$ is also in this case a preperiodic point. 

Now we can also summarize the results on wandering points in totally real quartic extensions:
\begin{prop}\label{prop:totreal4}
Let $K$ be a totally real quartic number field. Then all points in $K$ are preperiodic under iteration of $M$ if and only if $K$ is biquadratic. In all other cases there are wandering units in $K$.
\end{prop}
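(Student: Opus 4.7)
The plan is to split into cases according to the Galois group of the Galois closure $K_{\mathcal{G}}/\Q$, which as a transitive subgroup of $S_4$ must be one of $S_4$, $A_4$, $D_4$, $C_4$, or $C_2\times C_2$. I will show that wandering units exist in the first four cases and that every element is preperiodic in the last (biquadratic) case.

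For $\Gal(K_{\mathcal{G}}/\Q) \cong S_4$ or $A_4$, I would apply Lemma \ref{lem:totreal4} to produce a unit $\alpha$ whose conjugates $\alpha_1,\alpha_2,\alpha_3,\alpha_4$ satisfy $\abs{\alpha_1}\geq\abs{\alpha_2}>1>\abs{\alpha_3}\geq\abs{\alpha_4}$ with $\abs{\alpha_1\alpha_4}\neq 1$. After possibly replacing $\alpha$ by $\alpha^{-1}$, arrange $\abs{\alpha_1\alpha_4}>1$, which forces $\abs{\alpha_2\alpha_3}<1$. Since the Galois group contains $A_4$, the six products $\alpha_i\alpha_j$ are all distinct conjugates of $M(\alpha)=\pm\alpha_1\alpha_2$; counting which lie outside the unit disk gives $M^2(\alpha)=\pm\alpha_1\alpha_2\cdot\alpha_1\alpha_3\cdot\alpha_1\alpha_4=\alpha_1^2\cdot N(\alpha)=\alpha_1^2$. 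Since totally real algebraic numbers are torsion-free, Lemma \ref{lem:torsionfree} finishes this case.

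For $\Gal(K_{\mathcal{G}}/\Q)\cong C_4$ or $D_4$, pick $\sigma$ of order $4$. Combining Lemma \ref{lem:Dn} (which identifies the embeddings of $K$ with the restrictions of $\langle\sigma\rangle$ and tells us how a reflection acts) with Lemma \ref{lem:totreal4}, I would produce a unit $\alpha$ with $\abs{\alpha}>\abs{\sigma(\alpha)}>1>\abs{\sigma^2(\alpha)},\abs{\sigma^3(\alpha)}$ and $\abs{\alpha\sigma^3(\alpha)}\neq 1$. Then $M(\alpha)=\pm\alpha\sigma(\alpha)$, and Lemma \ref{lem:Dn}(iii) shows that its full Galois orbit is $\{\alpha\sigma(\alpha),\sigma(\alpha)\sigma^2(\alpha),\sigma^2(\alpha)\sigma^3(\alpha),\sigma^3(\alpha)\alpha\}$. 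The normalization $\abs{\sigma(\alpha)\sigma^2(\alpha)}\cdot\abs{\sigma^3(\alpha)\alpha}=1$ together with $\abs{\sigma^3(\alpha)\alpha}\neq 1$ implies exactly one of these two lies outside the unit circle, so $M^2(\alpha)>M(\alpha)$ and the orbit has length $\geq 3$. The conclusion then follows from \cite[Theorem 2]{FPZ2020}.

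For the biquadratic case $K=\Q(\sqrt{p},\sqrt{q})$, the extension $K/\Q$ is Galois, so $M(\alpha)\in K$ for every $\alpha\in K$; replacing $\alpha$ by $M(\alpha)$ I may assume $\alpha$ is an algebraic integer with $\alpha>1$. Writing the four conjugates explicitly in the form $a\pm b\sqrt{p}\pm c\sqrt{q}\pm d\sqrt{pq}$ (with the usual sign pattern), I would split according to how many of the conjugates lie outside the unit circle. If this count is $0,1,$ or $4$, preperiodicity is immediate; if it is $2$, then $M(\alpha)$ lands in a quadratic subfield and is preperiodic of degree $\leq 2$. The main obstacle is the remaining case of exactly $3$ conjugates outside the unit circle, where a priori the degrees of successive iterates could grow; but here $\alpha$ satisfies the hypothesis of \cite[Proposition 1]{FPZ2020}, which directly yields preperiodicity. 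Combining the five cases establishes the proposition.
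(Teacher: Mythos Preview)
Your proposal matches the paper's proof essentially step for step: the same case split by Galois group of $K_{\mathcal{G}}$, the same use of Lemmas~\ref{lem:totreal4}, \ref{lem:Dn}, and \ref{lem:torsionfree}, and the same appeals to \cite[Theorem~2 and Proposition~1]{FPZ2020}. Two small points worth tightening: the blanket claim ``totally real algebraic numbers are torsion-free'' is not literally true (e.g.\ $\sqrt{2}$), though the specific $\alpha_1$ you construct is torsion-free since $\abs{\alpha_1}$ strictly dominates all other conjugates; and in the biquadratic case with three conjugates outside the unit circle, the paper first computes $M(\alpha)=\pm N(\alpha)/\alpha_4$ and re-splits on the number of its conjugates outside the circle before invoking \cite[Proposition~1]{FPZ2020}, rather than applying it directly to $\alpha$.
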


All of the results of this section are summarized in Table \ref{tab1} of the introduction. 

\section{Classification of quintic number fields}
In this section $K$ always denotes a number field of degree $5$. We will show that in these number fields, there always exists an algebraic unit which is wandering under iteration of $M$. A partial result for this claim can be obtained immediately. As before we denote the Galois closure of $K/\Q$ by $K_{\mathcal{G}}$. 

\begin{prop}\label{prop:deg5A5S5}
If the Galois group $K_{\mathcal{G}}/\Q$ is isomorphic to $A_5$ or $S_5$, then $K$ contains a wandering unit. In particular, there is a wandering unit in $K$ if the signature of $K$ is $(3,1)$.
\end{prop}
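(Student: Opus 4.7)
The plan is to split into cases according to the signature of $K$. Because $\Gal(K_{\mathcal{G}}/\Q)$ is $A_5$ or $S_5$, the possible signatures of $K$ are $(5,0)$ and $(1,2)$ (for either group), together with $(3,1)$, which can occur only when $\Gal(K_{\mathcal{G}}/\Q)\cong S_5$, since complex conjugation would then restrict to a single transposition of the five embeddings of $K$ and $A_5$ contains no transpositions.

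For signatures $(5,0)$ and $(1,2)$ I would apply Lemma \ref{lem:deg5} directly, with $(n,q)=(2,0)$ and $(n,q)=(1,1)$ respectively, to obtain an algebraic unit $\alpha\in\mathcal{O}_K^*$ whose conjugate absolute values are strictly ordered as in (i) and satisfy the extra inequality (ii). Then $M(\alpha)$ is a triple product of conjugates of $\alpha$ (namely $\pm\alpha_1\alpha_2\alpha_3$ in the totally real case and $\pm\alpha_1\alpha_2\overline{\alpha_2}$ in the $(1,2)$ case). By the $3$-transitivity of $A_5$ and $S_5$ on the five embeddings, the ten Galois conjugates of this element are indexed by the $3$-subsets of $\{1,\ldots,5\}$. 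A combinatorial check, using condition (ii) together with the unit identity $|N_{K/\Q}(\alpha)|=1$, shows that exactly seven of these triples lie outside the closed unit disc and that their product collapses to $M(\alpha)^2$. Thus $M^2(\alpha)=M(\alpha)^2$, and Lemma \ref{lem:torsionfree} (with $k=2$, $\ell=1$, $n=2$) shows $\alpha$ is a wandering unit.

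The main obstacle is the signature $(3,1)$ case, for which Lemma \ref{lem:deg5} is not available (the number of pairwise non-conjugate embeddings is even). Here I would run a separate Dirichlet-style argument: the logarithmic embedding $L(\alpha)=(\log|\sigma_1(\alpha)|,\log|\sigma_2(\alpha)|,\log|\sigma_3(\alpha)|,2\log|\sigma_4(\alpha)|)$ identifies $\mathcal{O}_K^*$ with a rank-three lattice in the hyperplane $\{x_1+x_2+x_3+x_4=0\}\subset\mathbb{R}^4$, and by choosing $\log|\sigma_1(\alpha)|$ sufficiently large compared to the other coordinates one can produce a unit $\alpha$ with
\[
|\alpha_1|>|\alpha_2|>1>|\alpha_3|>|\alpha_4|=|\alpha_5|,
\]
satisfying $|\alpha_1\alpha_j|>1$ for every $j\in\{2,3,4,5\}$ and $|\alpha_i\alpha_j|<1$ for $2\le i<j\le 5$. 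Then $M(\alpha)=\pm\alpha_1\alpha_2$ has degree $10$ by the $2$-transitivity of $S_5$, and among its ten conjugates exactly the four pairs $\alpha_1\alpha_j$ for $j=2,3,4,5$ lie outside the unit disc, giving
\[
M^{2}(\alpha)=|\alpha_1|^4\cdot|\alpha_2\alpha_3\alpha_4\alpha_5|=|\alpha_1|^3.
\]
Viewed in $K$ via the real embedding $\sigma_1$, this equals $\pm\alpha^3$. Since $[K:\Q]=5$ is prime and $\alpha^3\notin\Q$, $\alpha^3$ has degree five, so $M^3(\alpha)=M(\pm\alpha^3)=M(\alpha)^3=M^{1}(\alpha)^3$, and Lemma \ref{lem:torsionfree} (with $k=3$, $\ell=1$, $n=3$) again yields a wandering unit.

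The hardest step is therefore the Dirichlet construction in the $(3,1)$ case, which must be carried out from scratch because the existing lemma covers only an odd number of pairwise non-conjugate embeddings; the extra iteration of $M$ needed before Lemma \ref{lem:torsionfree} applies is only a minor additional complication. The combinatorial bookkeeping in the other two signatures is routine once the ordering from Lemma \ref{lem:deg5} is in place.
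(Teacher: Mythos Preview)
Your proof is correct but takes a markedly different route from the paper's. The paper's argument is a two-line appeal to an external result: by \cite[Theorem~3]{FPZ2020}, when $\Gal(K_{\mathcal G}/\Q)\supseteq A_5$ every unit $\alpha\in K$ with none of $\pm\alpha^{\pm1}$ a Pisot number is wandering, and Dirichlet's unit theorem supplies such a unit; the signature-$(3,1)$ clause then follows from the observation (as you also note) that complex conjugation acts as a transposition. Your approach instead stays entirely inside the paper's own toolkit---Lemmas~\ref{lem:torsionfree} and~\ref{lem:deg5}, together with a bespoke Dirichlet construction for signature $(3,1)$---and computes $M^2(\alpha)=M(\alpha)^2$ in signatures $(5,0)$ and $(1,2)$, and $M^3(\alpha)=M(\alpha)^3$ in signature $(3,1)$, explicitly. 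This is very much in the spirit of the later Propositions~\ref{prop:Dih5} and~\ref{prop:F5} and has the virtue of being self-contained; the paper's version is shorter only because the real work has been exported to \cite{FPZ2020}.

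One point deserves tightening. Your assertion that $M(\alpha)$ has degree~$10$ does not follow from $3$-transitivity (resp.\ $2$-transitivity) alone: transitivity tells you the Galois orbit meets every $3$-subset, but not that the ten products $\alpha_I$ are pairwise distinct as algebraic numbers. What actually forces distinctness is a short group-theoretic check: the only subgroups of $S_5$ (resp.\ $A_5$) containing the setwise stabiliser of $\{1,2,3\}$ are that stabiliser itself and the full group, and the latter would force $\alpha_3=\alpha_4$, contradicting the strict inequalities you arranged. The same remark applies to the ten pairs $\alpha_i\alpha_j$ in the signature-$(3,1)$ case. With that sentence added, your argument is complete.
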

\begin{proof}
By \cite[Theorem 3]{FPZ2020} any algebraic unit $\alpha \in K$ such that none of $\pm \alpha^{\pm 1}$ is a Pisot-Vijayaraghavan number is wandering under iteration of $M$. By Dirichlet's unit theorem, we can find such an algebraic unit in $K$. Lastly, we note that if the signature of $K$ is $(3,1)$, then $\Gal(K_{\mathcal{G}}/\Q)$ is isomorphic to the full group $S_5$ (see \cite[Lemma 15.9]{Stewart}), which completes the proof of the proposition.
\end{proof}

Since $\Gal(K_{\mathcal{G}}/\Q)$ must be isomorphic to a transitive subgroup of $S_5$, it remains to construct wandering units if $\Gal(K_{\mathcal{G}}/\Q)$ is isomorphic to $C_5$, $D_5$, or $F_5$.

\begin{prop}\label{prop:cyclic5}
Let $K/\mathbb{Q}$ be a Galois extension of degree five. Then there is an algebraic unit in $K$, which is a wandering point under iteration of the Mahler measure.
\end{prop}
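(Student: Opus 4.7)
Since $[K:\Q]=5$ is an odd prime and $K/\Q$ is Galois, the Galois group has no element of order $2$, so complex conjugation fixes $K$ and $K$ is totally real with $\Gal(K/\Q)\cong C_5$. Let $\sigma$ be a generator of $\Gal(K/\Q)$; the five embeddings of $K$ into $\mathbb{R}$ are then $\id,\sigma,\sigma^2,\sigma^3,\sigma^4$. Because $K/\Q$ is Galois, $M(\beta)\in K$ for every $\beta\in K$, so the entire forward orbit stays inside $K$.

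My plan is to follow the template used in the totally real $C_4/D_4$ portion of Proposition~\ref{prop:totreal4}: use Dirichlet's unit theorem to produce a unit $\alpha\in\cO_K^{*}$ whose archimedean profile is in sufficiently generic position, compute $M(\alpha)$ and $M^2(\alpha)$ explicitly, verify that the orbit contains at least three distinct elements, and then invoke \cite[Theorem~2]{FPZ2020} to conclude that $\alpha$ is wandering.

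Concretely, I would use the lattice/fundamental-domain argument of Lemma~\ref{lem:totreal4}, suitably enlarged from the rank-$3$ case there to the rank-$4$ case needed here, to find a unit $\alpha$ satisfying
\[
|\alpha|>|\sigma(\alpha)|>1>|\sigma^2(\alpha)|>|\sigma^3(\alpha)|>|\sigma^4(\alpha)|,
\]
together with the two auxiliary strict inequalities $|\sigma(\alpha)\sigma^2(\alpha)|<1$ and $|\alpha\sigma^4(\alpha)|>1$. With such a choice $M(\alpha)=\pm\alpha\sigma(\alpha)$, and among the five cyclic conjugates $\sigma^i(\alpha)\sigma^{i+1}(\alpha)$ of $\alpha\sigma(\alpha)$ (indices mod $5$) exactly two — namely $\alpha\sigma(\alpha)$ and $\alpha\sigma^4(\alpha)$ — lie outside the unit circle. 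Using the norm relation $\prod_{i=0}^{4}\sigma^i(\alpha)=\pm 1$ this gives
\[
M^2(\alpha)=\pm\alpha^2\sigma(\alpha)\sigma^4(\alpha)=\pm\frac{\alpha}{\sigma^2(\alpha)\sigma^3(\alpha)},
\]
whose absolute value exceeds $|M(\alpha)|$ by a factor of $|\alpha\sigma^4(\alpha)|>1$. Hence $M^2(\alpha)\neq M(\alpha)$ and clearly $M(\alpha)\neq\alpha$, so $\cO_M(\alpha)$ contains at least three distinct elements and \cite[Theorem~2]{FPZ2020} forces $\alpha$ to be a wandering unit.

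The principal obstacle is the Dirichlet step: one must verify that the rank-$4$ unit lattice actually admits a translate lying in the open region cut out by all four prescribed strict inequalities, and that no conjugate encountered in the two-step Mahler computation accidentally has absolute value exactly $1$. This is a direct extension of the lattice arguments in Lemmas~\ref{lem:totreal4} and~\ref{lem:deg5}, and I expect it to be where essentially all of the quantitative work lives; once the archimedean profile is secured, the rest is the short closed-form computation above.
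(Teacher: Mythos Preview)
Your archimedean setup and the two-step computation $M(\alpha)=\pm\alpha\sigma(\alpha)$, $M^{2}(\alpha)=\pm\alpha^{2}\sigma(\alpha)\sigma^{4}(\alpha)$ are correct, and the Dirichlet step poses no real difficulty (in fact, under your hypotheses the inequality $|\alpha\sigma^{4}(\alpha)|>1$ is automatic, since $|\alpha\sigma^{4}(\alpha)|=|\sigma(\alpha)\sigma^{2}(\alpha)\sigma^{3}(\alpha)|^{-1}$ and both $|\sigma(\alpha)\sigma^{2}(\alpha)|<1$ and $|\sigma^{3}(\alpha)|<1$). The genuine gap is the final step: \cite[Theorem~2]{FPZ2020} is a statement about \emph{quartic} fields, and that is exactly how the paper uses it in the $C_4/D_4$ part of Proposition~\ref{prop:totreal4}. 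It does not apply to degree~$5$; knowing that $|\cO_M(\alpha)|\geq 3$ tells you only that $M(\alpha)$ is not a fixed point, not that no later iterate is. Indeed, if a three-element criterion were available for $C_5$, the paper would not develop the machinery of Section~\ref{sec:cyclic} to handle this case.

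The paper's own proof of Proposition~\ref{prop:cyclic5} simply defers to Proposition~\ref{prop:cyclic}, whose argument (Lemmas~\ref{lem:distribution2} and~\ref{lem:distribution1}) tracks a specific ordering of conjugates that is preserved or reflected under $M$, so that no iterate can ever be a fixed point. If you want a self-contained degree-$5$ argument closer in spirit to your computation, a cleaner route is to imitate the $D_5$ proof (Proposition~\ref{prop:Dih5}) rather than the $C_4/D_4$ one: choose $\alpha$ with \emph{three} conjugates outside the unit circle, namely $|\alpha|>|\sigma(\alpha)|>|\sigma^{4}(\alpha)|>1>|\sigma^{2}(\alpha)|>|\sigma^{3}(\alpha)|$ together with $|\alpha\sigma^{3}(\alpha)|<1$, and then show directly that $M$ sends $\alpha^{j}\sigma(\alpha)^{i}\sigma^{4}(\alpha)^{i}$ (with $j\geq i\geq 1$) to $\alpha^{j+i}\sigma(\alpha)^{j}\sigma^{4}(\alpha)^{j}$. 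In the $C_5$ case this is even easier than in $D_5$, since there is no reflection $\tau$ to worry about, and it yields an infinite strictly increasing orbit without appealing to any external theorem.
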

\begin{proof}
If $K/\Q$ is a Galois extension of degree five, then $\Gal(K/\Q)\cong C_5$. The exsistence of wandering units in this case is proved later in Section \ref{sec:cyclic}. 
\end{proof}

\begin{lemma}\label{lem:D5signature(1,2)}
Let $K$ be a real number field of degree five with signature $(1,2)$. Let $K_{\mathcal{G}}$ denote the Galois closure of $K$ over $\Q$, and assume $\Gal(K_{\mathcal{G}}/\Q)\cong D_5$ or $\Gal(K_{\mathcal{G}}/\Q)\cong F_5$. For any $\sigma \in \Gal(K_{\mathcal{G}}/\Q))$ of order five, there is an algebraic unit $\alpha \in K$ satisfying
\begin{enumerate}[label=(\roman*),start=1]
\item $\abs{\alpha} > \abs{\sigma(\alpha)} = \abs{\sigma^4(\alpha)}>1 > \abs{\sigma^2(\alpha)} = \abs{\sigma^3(\alpha)}$, and
\item $\abs{\alpha \sigma^2(\alpha)} <1$.
\end{enumerate}
\end{lemma}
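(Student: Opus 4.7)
The approach mirrors Lemma \ref{lem:deg5}: apply Dirichlet's unit theorem to produce an algebraic unit with a prescribed logarithmic embedding vector, after a short Galois-theoretic setup. Because $K$ has signature $(1,2)$ we may regard $K\subset\bR$, so complex conjugation $c\in\Gal(K_\mathcal{G}/\Q)$ fixes $K$. In the $D_5$ case $\Gal(K_\mathcal{G}/K)=\langle\tau\rangle$ has order $2$, forcing $c=\tau$; in the $F_5$ case $\Gal(K_\mathcal{G}/K)=\langle\tau\rangle$ has order $4$ and $c$, being of order $2$, must equal $\tau^2$. Either way, $c\sigma c^{-1}=\sigma^{-1}$: directly in the $D_5$ case via $\tau\sigma\tau^{-1}=\sigma^{-1}$, and in the $F_5$ case by iterating Lemma \ref{lem:Fn}(i), which gives $\tau^2\sigma\tau^{-2}=\sigma^{4}=\sigma^{-1}$ (independently of whether $\tau\sigma\tau^{-1}=\sigma^2$ or $\sigma^3$). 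Combined with Lemmas \ref{lem:Dn}(iii) and \ref{lem:Fn}(iii), this identifies the five embeddings of $K$ into $\C$ as the real embedding $\id|_K$ together with the two complex-conjugate pairs $\{\sigma|_K,\sigma^{4}|_K\}$ and $\{\sigma^{2}|_K,\sigma^{3}|_K\}$. In particular the equalities in (i) hold automatically for every $\alpha\in K$.

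Next I would apply Dirichlet's unit theorem. The unit rank of $K$ is $r_1+r_2-1=2$, and the logarithmic map
\[
\lambda:\mathcal{O}_K^*\to\bR^3,\qquad \lambda(\alpha)=\bigl(\log|\alpha|,\,2\log|\sigma(\alpha)|,\,2\log|\sigma^2(\alpha)|\bigr),
\]
sends $\mathcal{O}_K^*$ to a rank-$2$ lattice in the hyperplane $H=\{(x,y,z)\in\bR^3:x+y+z=0\}$. Writing conditions (i) and (ii) in terms of $(x,y,z)$ yields the linear inequalities $x>y/2>0$, $z<0$, and $x+z/2<0$. Eliminating $y=-x-z$ collapses these to
\[
x>0\quad\text{and}\quad -3x<z<-2x,
\]
which defines a non-empty open two-dimensional cone $\mathcal{C}$ in $H$.

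Finally, because $\lambda(\mathcal{O}_K^*)$ is full rank inside the two-dimensional plane $H$, every non-empty open cone in $H$ contains lattice points: translate a sufficiently large dilate of a fundamental parallelogram along a ray deep inside the cone, and homogeneity guarantees it sits entirely in the cone. Any unit $\alpha\in\mathcal{O}_K^*$ with $\lambda(\alpha)\in\mathcal{C}$ then satisfies both (i) and (ii). The only real subtlety is the first step---checking uniformly across the $D_5$ and $F_5$ presentations that complex conjugation acts by inversion on $\langle\sigma\rangle$, given the asymmetric way $F_5$ is presented in Lemma \ref{lem:Fn}. Once that symmetry is in place, the remainder is a routine variant of the construction in Lemma \ref{lem:deg5}.
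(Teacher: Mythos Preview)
Your proof is correct and follows essentially the same approach as the paper. The Galois-theoretic setup (identifying complex conjugation as $\tau$ or $\tau^2$ and deducing that $\sigma|_K$ pairs with $\sigma^4|_K$ and $\sigma^2|_K$ with $\sigma^3|_K$) is identical; the only difference is that where the paper simply invokes Lemma~\ref{lem:deg5} to produce the unit, you redo that construction by hand for the special case $(n,q)=(1,1)$ via the explicit open cone $\{x>0,\ -3x<z<-2x\}$ in the log-embedding plane.
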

\begin{proof}
Let $\sigma \in \Gal(K_{\mathcal{G}}/\Q))$ be of order five. Since $K$ is real, it is fixed by complex conjugation. If $\Gal(K_{\mathcal{G}}/\Q)\cong D_5$, then complex conjugation must be given by $\tau$ from Lemma \ref{lem:Dn}. Moreover, $\tau\sigma(\alpha)=\sigma^4\tau(\alpha)=\sigma^4(\alpha)$. In the case that $\Gal(K_{\mathcal{G}}/\Q)\cong F_5$, complex conjugation must be an element of order 2 fixing $K$. Hence, complex conjugation is given by $\tau^2$ for the automorphism $\tau$ from Lemma \ref{lem:Fn}. Moreover, $\tau^2\sigma(\alpha)=\sigma^4\tau^2(\alpha)=\sigma^4(\alpha)$. Now, in every case, it follows by Lemma \ref{lem:deg5} that there is an algebraic unit $\alpha \in K$ such that $\abs{\alpha}> \abs{\sigma(\alpha)}=\abs{\sigma^4(\alpha)}>1 > \abs{\sigma^2(\alpha)}= \abs{\sigma^3(\alpha)}$, proving the lemma.
\end{proof}

\begin{prop}\label{prop:Dih5}
Suppose $K/\mathbb{Q}$ is an extension of degree five. Let $K_{\mathcal{G}}$ be the Galois closure of $K$ over $\Q$. If $\Gal(K_{\mathcal{G}}/\Q)\cong D_5$, then there is an algebraic unit in $K$ which is a wandering point under iteration of the Mahler measure.
\end{prop}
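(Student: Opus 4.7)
The plan is to split the argument by the signature of $K$, which must be either totally real or $(1,2)$. Indeed, complex conjugation sits in $\Gal(K_{\mathcal{G}}/\Q)\cong D_5$ as an element of order $1$ or $2$; a trivial action gives $K_{\mathcal{G}}$ (and hence $K$) totally real, while if complex conjugation is a reflection, a short coset-counting shows $K$ has signature $(1,2)$, and we may take this reflection to be the $\tau$ from Lemma \ref{lem:Dn} fixing $K$, exactly as in the setup of Lemma \ref{lem:D5signature(1,2)}.

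For the signature $(1,2)$ case I would apply Lemma \ref{lem:D5signature(1,2)} to obtain an algebraic unit $\alpha \in K$ satisfying
\[
\abs{\alpha} > \abs{\sigma(\alpha)} = \abs{\sigma^4(\alpha)} > 1 > \abs{\sigma^2(\alpha)} = \abs{\sigma^3(\alpha)} \quad\text{and}\quad \abs{\alpha\,\sigma^2(\alpha)} < 1.
\]
Setting $A = \abs{\alpha}$, $B = \abs{\sigma(\alpha)}$, $C = \abs{\sigma^2(\alpha)}$, the unit condition $N(\alpha) = \pm 1$ becomes $AB^2C^2 = 1$. A direct check gives $M(\alpha) = \pm\,\alpha\,\sigma(\alpha)\,\sigma^4(\alpha)$, which lies in $K$ by Lemma \ref{lem:Dn}. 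The central claim, which I would prove by induction on $n$, is that
\[
M^n(\alpha) \;=\; \pm\, \alpha^{F_{n+1}}\bigl(\sigma(\alpha)\sigma^4(\alpha)\bigr)^{F_n}
\]
for all $n \geq 1$, where $F_n$ is the Fibonacci sequence with $F_1 = F_2 = 1$. The induction step writes out the five Galois conjugates of the proposed expression, uses $AB^2C^2 = 1$ together with $AC < 1$ to verify that exactly the conjugates at exponents $k \in \{0, 1, 4\}$ lie outside the unit circle, multiplies them, and reduces the product modulo the norm relation $\prod_i \sigma^i(\alpha) = \pm 1$ to recover the formula at level $n+1$. Since $|M^n(\alpha)| = A^{F_{n+1}} B^{2F_n}$ is strictly increasing in $n$, the orbit is infinite and $\alpha$ is wandering.

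For the totally real case I would take a parallel approach, producing the unit directly from Dirichlet's unit theorem. Writing $(a,b,c,d,e) = (\log\abs{\sigma^i(\alpha)})_{i=0,\ldots,4}$, I would require $a > b > c > 0 > d > e$ with $a + b + c + d + e = 0$, together with $a + d < 0$, $b + c + d > 0$, $a + b + e > 0$, and $c + d + e < 0$. These inequalities cut out a nonempty open cone in the Dirichlet log-unit hyperplane (an explicit point in it is $(2,\tfrac{3}{2},1,-\tfrac{21}{10},-\tfrac{12}{5})$), so such a unit $\alpha$ exists. Then $M(\alpha) = \pm\,\alpha\,\sigma(\alpha)\,\sigma^2(\alpha)$, and the analogous Fibonacci induction yields $M^n(\alpha) = \pm\,\alpha^{F_n}\sigma(\alpha)^{F_{n+1}}\sigma^2(\alpha)^{F_n}$, again strictly increasing in modulus.

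The main obstacle in both cases is the inductive bookkeeping: I must show that at each stage exactly the same three Galois conjugates of $M^n(\alpha)$ lie outside the unit circle. The logarithm of each conjugate has the form $F_n u_k + F_{n+1} v_k$ for $u_k, v_k$ explicit linear combinations of $(a,b,c,d,e)$, and the auxiliary inequalities above are engineered so that each such sign is controlled uniformly in $n$, using only that $F_{n+1}/F_n$ stays in $[1,2]$ and tends to the golden ratio.
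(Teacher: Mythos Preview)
Your argument is correct. In the signature-$(1,2)$ case it is essentially identical to the paper's: both produce $M^n(\alpha)=\pm\alpha^{j}\sigma(\alpha)^{i}\sigma^4(\alpha)^{i}$ with $(j,i)$ running through consecutive Fibonacci numbers, and both verify the three ``outside'' conjugates using only $j\geq i$ together with the norm relation (your explicit use of $AC<1$ is in fact not needed---what drives the signs is $BC<1$, which already follows from $A>1$ and $AB^2C^2=1$).

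The genuine divergence is in the totally real case. The paper does \emph{not} split off this case: it uses Lemma~\ref{lem:deg5} to obtain a unit satisfying $\lvert\alpha\rvert>\lvert\sigma(\alpha)\rvert\geq\lvert\sigma^4(\alpha)\rvert>1>\lvert\sigma^2(\alpha)\rvert\geq\lvert\sigma^3(\alpha)\rvert$ and runs the \emph{same} induction on $\alpha^{j}\sigma(\alpha)^{i}\sigma^4(\alpha)^{i}$, which is fixed by $\tau$ and hence stays in $K$. You instead pick a different cone ($a>b>c>0>d>e$ with $a+d<0$, $b+c+d>0$, $a+b+e>0$, $c+d+e<0$) and track the form $\alpha^{i}\sigma(\alpha)^{j}\sigma^2(\alpha)^{i}$, which is fixed by the reflection $\sigma^{2}\tau$ rather than $\tau$; the five $\sigma^k$-conjugates then satisfy exactly the sign pattern you claim (each reduces to $i\cdot(\text{one of your auxiliary sums})+(j-i)\cdot(\text{something of known sign})$, using only $j\geq i$). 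The trade-off: the paper's route is shorter and keeps all iterates inside $K$, while your route is more hands-on (an explicit lattice point rather than an appeal to Lemma~\ref{lem:deg5}) but requires a second bookkeeping pass, and your totally real iterates $M^n(\alpha)$ live in the conjugate field fixed by $\sigma^2\tau$ rather than in $K$ itself---harmless for the wandering conclusion, but worth noting.
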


\begin{proof}
Note that under our assumption on the Galois group the only possible signatures of $K$ are $(5,0)$ or $(1,2)$. 
Let $\sigma \in \Gal(K_{\mathcal{G}}/\Q)$ be of order $5$, and let $\tau$ be as in Lemma \ref{lem:Dn}. Then by Lemma \ref{lem:Dn}, $\id$, $\sigma\vert_K$, $\sigma^2\vert_K$, $\sigma^3\vert_K$, and $\sigma^4\vert_K$ are precisely all the distinct embeddings of $K$ into $\mathbb{C}$. Since the Mahler measure is invariant under taking Galois conjugates and there is at least one real embedding of $K$, we may assume that $K$ is a real number field. Then, by Lemmas \ref{lem:deg5} and \ref{lem:D5signature(1,2)}, there exists an algebraic unit $\alpha \in K$ satisfying the following conditions:
\begin{enumerate}[label=\alph*.,start=1]
\item $\vert \alpha \vert > \vert \sigma (\alpha) \vert\geq\vert \sigma^4 (\alpha) \vert >1>\vert \sigma^2 (\alpha) \vert \geq \vert \sigma^3 (\alpha) \vert$,
\item $\vert \sigma^2(\alpha)\sigma^4(\alpha)\vert<\vert \alpha\sigma^3 ( \alpha) \vert<1$.
\end{enumerate}

Let $j\geq i \geq 1$ be integers. We will first show that the element $\alpha^j \sigma(\alpha)^i\sigma^4(\alpha)^i$ is not a fixed point for $M$.

We note that the degree of $\alpha^j \sigma(\alpha)^i\sigma^4(\alpha)^i$ is five and all conjugates of $\alpha^j\sigma(\alpha)^i\sigma^4(\alpha)^i$ are constructed by $\sigma^k$ for $k \in \{0,1,2,3,4\}$:
\begin{align*}
 \tau\sigma^k(\alpha^j\sigma(\alpha)^i\sigma^4(\alpha)^i)&=\sigma^{-k}\tau(\alpha)^j\sigma^{-k-1}\tau(\alpha)^i\sigma^{-k-4}\tau(\alpha)^i 
 \\ &= \sigma^{-k}(\alpha)^j\sigma^{-k-1}(\alpha)^i\sigma^{-k-4}(\alpha)^i \\
 &=  \sigma^{-k}(\alpha^j\sigma(\alpha)^i\sigma^4(\alpha)^i)
\end{align*}
Hence all conjugates of $\alpha^i\sigma(\alpha)^j\sigma^2(\alpha)^i$ are given by the action of $\id$, $\sigma$, $\sigma^2$, $\sigma^3$, and $\sigma^4$. 
Now, if $\alpha$ is an algebriac unit satisfying conditions (a) and (b) above, then 
\begin{itemize}
\item $\abs{\alpha^j \sigma(\alpha)^i\sigma^4(\alpha)^i} = \left\vert \frac{\alpha^{j-i}}{\sigma^2(\alpha)^i\sigma^3(\alpha)^i} \right\vert \geq  \left\vert \frac{1}{\sigma^2(\alpha)^i\sigma^3(\alpha)^i} \right\vert >1$
\item $\abs{\sigma\left( \alpha^j \sigma(\alpha)^i\sigma^4(\alpha)^i \right)} = \left\vert \frac{\sigma(\alpha)^{j-i}}{\sigma^3(\alpha)^i\sigma^4(\alpha)^i} \right\vert \geq \left\vert \frac{1}{\sigma^3(\alpha)^i\sigma^4(\alpha)^i} \right\vert >1$
\item $\abs{\sigma^2\left( \alpha^j \sigma(\alpha)^i\sigma^4(\alpha)^i \right)} = \left\vert \frac{\sigma^2(\alpha)^{j-i}}{\alpha^i\sigma^4(\alpha)^i} \right\vert \leq \left\vert \frac{1}{\alpha^i\sigma^4(\alpha)^i} \right\vert <1$
\item $\abs{\sigma^3\left( \alpha^j \sigma(\alpha)^i\sigma^4(\alpha)^i \right)} = \left\vert \frac{\sigma^3(\alpha)^{j-i}}{\sigma(\alpha)^i\alpha^i} \right\vert \leq \left\vert \frac{1}{\sigma(\alpha)^i\alpha^i} \right\vert <1$
\item $\abs{\sigma^4\left( \alpha^j \sigma(\alpha)^i\sigma^4(\alpha)^i \right)} = \left\vert \frac{\sigma^4(\alpha)^{j-i}}{\sigma^2(\alpha)^i\sigma(\alpha)^i} \right\vert \geq \left\vert \frac{1}{\sigma^2(\alpha)^i\sigma(\alpha)^i} \right\vert >1$
\end{itemize}

This shows that $\alpha^j\sigma(\alpha)^i\sigma^4(\alpha)^i$ is not a fixed point for $M$, since there are non trivial Galois conjugates outside the unit circle. Moreover, we have that
\[
M(\alpha^j \sigma(\alpha)^i\sigma^4(\alpha)^i)= \abs{\alpha^{j+2i}\sigma(\alpha)^{j+i}\sigma^2(\alpha)^i \sigma^3(\alpha)^i\sigma^4(\alpha)^{j+i}}=\abs{\alpha^{j+i}\sigma(\alpha)^j \sigma^4(\alpha)^j }
\]
is again of the prescribed form, and hence it is not a fixed point. Inductively, it follows that $\alpha^j\sigma(\alpha)^i\sigma^4(\alpha)^i$ is a wandering point under iteration of $M$. Since $M(\alpha)=\pm \alpha \sigma(\alpha)\sigma^4(\alpha)$, we see that $\alpha$ is a wandering point as well.
\end{proof}

\begin{prop}\label{prop:F5}
Suppose $K/\Q$ is of degree five and let $K_{\mathcal{G}}$ be the Galois closure of $K$ over $\Q$. If $\Gal(K_{\mathcal{G}}/\Q)$ is isomorphic to the Frobenius group $F_5$, then there exists an algebraic unit in $K$ which is wandering under iteration of the Mahler measure $M$.
\end{prop}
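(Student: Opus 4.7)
The plan is to parallel the proof of Proposition \ref{prop:Dih5}: construct an algebraic unit $\alpha\in K$ with carefully prescribed archimedean absolute values, verify the identity $M^2(\alpha)=M(\alpha)^2$ directly by enumerating the Galois orbit of $M(\alpha)$, and invoke the second statement of Lemma \ref{lem:torsionfree} (with $k=2$, $\ell=1$, $n=2$) to conclude that $\alpha$ is a wandering point.

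Since every involution of $F_5$ is conjugate to $\tau^2 \in \langle\tau\rangle$, either $K$ is totally real or complex conjugation is (after twisting) given by $\tau^2$; in the latter case a short check using $\tau^2\sigma = \sigma^4\tau^2$ shows that only one embedding of $K$ is real, so $K$ has signature $(1,2)$. In the totally real case I apply Lemma \ref{lem:deg5} with $n=2$, $q=0$ and the labeling $\sigma_j := \sigma^{j-1}|_K$ to obtain an algebraic unit $\alpha\in K$ with
\[
|\alpha|>|\sigma(\alpha)|>|\sigma^2(\alpha)|>1>|\sigma^3(\alpha)|>|\sigma^4(\alpha)| \quad\text{and}\quad |\alpha\sigma^3(\alpha)|<1;
\]
in the signature $(1,2)$ case Lemma \ref{lem:D5signature(1,2)} gives $|\alpha|>|\sigma(\alpha)|=|\sigma^4(\alpha)|>1>|\sigma^2(\alpha)|=|\sigma^3(\alpha)|$ and $|\alpha\sigma^2(\alpha)|<1$. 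Write $L\subset\{0,1,2,3,4\}$ for the three-element index set of the embeddings with absolute value greater than $1$ (so $L=\{0,1,2\}$ or $L=\{0,1,4\}$), and set $\beta := \prod_{i\in L}\sigma^i(\alpha)$, so that $M(\alpha)=\pm\beta$.

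By Lemma \ref{lem:Fn}(iii), $\sigma^k\tau^j\in F_5$ acts on the indices $0,\ldots,4$ via $i\mapsto k+2^j i\pmod{5}$, and a direct check shows the $F_5$-orbit of $L$ consists of all $\binom{5}{3}=10$ three-subsets of $\{0,\ldots,4\}$; hence $\beta$ has $10$ distinct Galois conjugates $\beta_S = \prod_{i\in S}\sigma^i(\alpha)$ indexed by these subsets. Writing $A_i := \log|\sigma^i(\alpha)|$, the hypothesis $|\alpha\sigma^3(\alpha)|<1$ (resp.\ $|\alpha\sigma^2(\alpha)|<1$) together with the ordering of the $A_i$'s implies $A_i+A_j<0$ for every $i\in L$, $j\notin L$. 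Combined with the unit identity $\sum_{i=0}^{4}A_i=0$, this forces $\log|\beta_S|=\sum_{i\in S}A_i$ to be positive if and only if $|S\cap L|\geq 2$, giving $\binom{3}{3}+\binom{3}{2}\binom{2}{1}=7$ conjugates of $\beta$ outside the unit disc.

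Finally, each index in $L$ appears in $\binom{2}{2}+\binom{2}{1}\binom{2}{1}=5$ of those seven subsets, and each index outside $L$ appears in $\binom{3}{2}=3$ of them, so
\[
M^2(\alpha)=\prod_{S:\,|S\cap L|\geq 2}|\beta_S|=\prod_{i\in L}|\sigma^i(\alpha)|^5\prod_{j\notin L}|\sigma^j(\alpha)|^3=|N(\alpha)|^3\cdot M(\alpha)^2=M(\alpha)^2
\]
using $|N(\alpha)|=1$. Lemma \ref{lem:torsionfree} then concludes that $\alpha$ is a wandering point under iteration of $M$. The main technical obstacle is the case analysis in the third step, identifying which seven of the ten conjugates of $\beta$ lie outside the unit circle; this reduces cleanly to the single ``mixed pair'' inequality, since every $A_i+A_j$ with $i\in L$ and $j\notin L$ is dominated by $\log|\alpha\sigma^3(\alpha)|$ (resp.\ $\log|\alpha\sigma^2(\alpha)|$), which is negative by hypothesis.
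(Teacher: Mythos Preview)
Your proof is correct and follows essentially the same route as the paper: construct a unit $\alpha$ with prescribed archimedean data, show that $M(\alpha)$ has ten conjugates of which exactly seven lie outside the unit circle, deduce $M^{2}(\alpha)=M(\alpha)^{2}$, and apply Lemma~\ref{lem:torsionfree}. The only differences are cosmetic: in the totally real case you take $L=\{0,1,2\}$ (with the constraint $|\alpha\sigma^{3}(\alpha)|<1$) whereas the paper uses $L=\{0,1,4\}$ uniformly (with $|\alpha\sigma^{2}(\alpha)|<1$), and you replace the paper's explicit list of ten conjugates by the cleaner combinatorial observation that $\sum_{i\in S}A_i>0$ iff $|S\cap L|\geq 2$, which makes the final multiplicity count ($5$ for $i\in L$, $3$ for $j\notin L$) transparent.
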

\begin{proof}
Again we note, that the assumption on the Galois group implies that the signature of $K$ is either $(5,0)$ or $(1,2)$. Let $\sigma \in \Gal(K_{\mathcal{G}}/\Q)$ be of order $5$, and let $\tau$ be as in Lemma \ref{lem:Fn}. Then by Lemma \ref{lem:Fn}, $\id$, $\sigma\vert_K$, $\sigma^2\vert_K$, $\sigma^3\vert_K$, and $\sigma^4\vert_K$ are precisely all of the distinct embeddings of $K$ into $\mathbb{C}$. Again, we assume without loss of generality that $K$ is a real number field. Then by Lemma \ref{lem:deg5} and Lemma \ref{lem:D5signature(1,2)} there exists an algebraic unit $\alpha \in K$ satisfying the following conditions:
\begin{enumerate}[label=\alph*.,start=1]
\item $\abs{\alpha} > \abs{\sigma(\alpha)} \geq \abs{\sigma^4(\alpha)} >1 >\abs{\sigma^{2}(\alpha)} \geq \abs{\sigma^3(\alpha)}$,
\item $\abs{\alpha\sigma^{2}(\alpha)}<1$.
\end{enumerate}
For such an $\alpha\in K$ we have  $M(\alpha)=\pm\alpha\sigma(\alpha)\sigma^4(\alpha)$. Since $\tau^2$ fixes $\alpha$ and swaps $\sigma(\alpha)$ and $\sigma^4(\alpha)$, we have that $M(\alpha)$ is fixed by $\tau$. In particular, the degree of $M(\alpha)$ is at most $10$. Since the elements $\sigma^k,\sigma^k\tau$ for $k\in\{0,\ldots,4\}$ give $10$ different Galois conjugates of $M(\alpha)$, the degree is equal to $10$. In order to write down the Galois cojugates explicitly, we first calculate (using Lemma \ref{lem:Fn}) 
\[
\tau(\alpha\sigma(\alpha)\sigma^4(\alpha))=  \alpha \sigma^2(\alpha)\sigma^3(\alpha).
\]
Now, we easily calculate the absolute values of the Galois conjugates of $M(\alpha)$:
$$\vert \alpha\sigma(\alpha)\sigma^4(\alpha)\vert=\left\vert \frac{1}{\sigma^2(\alpha)\sigma^3(\alpha)} \right\vert>1, \quad \vert \alpha\sigma^2(\alpha)\sigma^3(\alpha)\vert=\left\vert \frac{1}{\sigma(\alpha)\sigma^4(\alpha)} \right\vert<1,$$
$$\vert \sigma(\alpha)\sigma^2(\alpha)\alpha\vert=\left\vert \frac{1}{\sigma^3(\alpha)\sigma^4(\alpha)} \right\vert>1, \quad  \vert \sigma(\alpha)\sigma^3(\alpha)\sigma^4(\alpha)\vert=\left\vert \frac{1}{\alpha\sigma^2(\alpha)} \right\vert>1, $$
$$\vert \sigma^2(\alpha)\sigma^3(\alpha)\sigma(\alpha)\vert=\left\vert \frac{1}{\alpha\sigma^4(\alpha)} \right\vert<1, \quad  \vert \sigma^2(\alpha)\sigma^4(\alpha)\alpha\vert=\left\vert \frac{1}{\sigma(\alpha)\sigma^3(\alpha)} \right\vert>1,$$ $$\vert \sigma^3(\alpha)\sigma^4(\alpha)\sigma^2(\alpha)\vert=\left\vert \frac{1}{\alpha\sigma(\alpha)} \right\vert<1, \quad  
\vert \sigma^3(\alpha)\alpha\sigma(\alpha)\vert=\left\vert \frac{1}{\sigma^2(\alpha)\sigma^4(\alpha)} \right|>1,$$ $$\vert \sigma^4(\alpha)\alpha\sigma^3(\alpha)\vert=\left\vert \frac{1}{\sigma(\alpha)\sigma^2(\alpha)} \right|>1, \quad  
\vert \sigma^4(\alpha)\sigma(\alpha)\sigma^2(\alpha)\vert=\left\vert \frac{1}{\alpha\sigma^3(\alpha)} \right\vert>1$$
Multiplying the seven terms which are greater than $1$, and using that the norm of $\alpha$ is $\pm1$, gives 
\[
M^2(\alpha) =\pm \alpha^5 \sigma(\alpha)^5\sigma^2(\alpha)^3\sigma^3(\alpha)^3 \sigma^4(\alpha)^5= \pm \alpha^2 \sigma(\alpha)^2\sigma^4(\alpha)^2=M(\alpha)^2.
\]
By Lemma \ref{lem:torsionfree}, this means that $\alpha$ is a wandering unit, proving the proposition.
\end{proof}

To summarize, we get the following theorem:
\begin{thm}
Let $K/\Q$ be an extension of degree five. Then there exists an algebraic unit in $K$ which is wandering under iteration of the Mahler measure $M$.
\end{thm}
\begin{proof}
 Since $\Gal(K_{\mathcal{G}}/\Q)$ must be a transitive subgroup of $S_5$ with order divisible by $5$, it must be isomorphic to $C_5$, $D_5$, $F_5$, $A_5$ or $S_5$ (we note that $\Gal(K_{\mathcal{G}}/\Q)$ cannot be isomorphic to $C_5$ if the signature of $K/\Q$ is (1,2)). In all of these cases, there exists a wandering unit by Propositions \ref{prop:deg5A5S5}--\ref{prop:F5}.
\end{proof}

\section{Some extensions of degree six}

In this section, we will show that there are number fields $K$ of degree $6$ over the rationals such that all elements in $K$ are preperiodic under iteration of $M$, as well as extensions which contain wandering points for the Mahler measure.

Assume that $K/\Q$ is a totally imaginary Galois extension of degree $6$. Then, for any $\alpha\in K$, the value $M(\alpha)$ is a real algebraic integer in $K$. Therefore, $M(\alpha)$ must live in a proper subfield of $K$, which implies that the degree of $M(\alpha)$ is $1$, $2$ or $3$. In all cases it follows that $\alpha$ is preperiodic under iteration of $M$.

However, there are also non-Galois extensions of degree $6$, without wandering points of $M$.

\begin{prop}\label{lem:normalizer6}
Let $K/\Q$ be a CM-field of degree six. Then every element of $K$ is preperiodic under iteration of $M$.
\end{prop}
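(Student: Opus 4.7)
The plan is to use the CM structure to force $M(\alpha)$ into a field of degree at most three, and then to invoke the third author's thesis result that every algebraic number of degree at most three is preperiodic under $M$. Write $K^+$ for the maximal totally real subfield of $K$, so that $[K:K^+]=2$ and $[K^+:\Q]=3$, and let $\iota$ denote the nontrivial element of $\Gal(K/K^+)$. Because $K$ is CM, for every embedding $\sigma\colon K\hookrightarrow\C$ the image $\sigma(K)$ is stable under complex conjugation and one has $\overline{\sigma(x)}=\sigma(\iota(x))$ for all $x\in K$. In particular, the six Galois conjugates of any generator of $K$ pair up into three complex-conjugate pairs of equal absolute value, and the element $\gamma:=\alpha\iota(\alpha)=N_{K/K^+}(\alpha)$ lies in $K^+$, with the three real conjugates $\gamma_j=|\sigma_j(\alpha)|^2>0$ indexed by representatives $\sigma_1,\sigma_2,\sigma_3$ of the pairs.

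Fix $\alpha\in K$. If $[\Q(\alpha):\Q]\leq 3$, then $\alpha$ is already preperiodic by \cite{ZhangThesis}, so assume $\alpha$ generates $K$, and let $\ell\in\Z_{>0}$ denote the leading coefficient of its minimal polynomial. Then
\[
M(\alpha)=\ell\prod_{j=1}^{3}\max(1,\gamma_j).
\]
Set $S=\{j:\gamma_j>1\}$. If $|S|\in\{0,3\}$, then $M(\alpha)$ is either $\ell$ or $\ell\cdot N_{K^+/\Q}(\gamma)$, both rational integers. If $S=\{j\}$, then $M(\alpha)=\ell\gamma_j\in\sigma_j(K^+)$. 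If $S=\{j,k\}$ with complement $\{l\}$, then $\gamma_j\gamma_k=N_{K^+/\Q}(\gamma)/\gamma_l$ is a rational multiple of $1/\gamma_l\in\sigma_l(K^+)$, so again $M(\alpha)\in\sigma_l(K^+)$. In every case $M(\alpha)$ is an algebraic number of degree at most three over $\Q$, so by \cite{ZhangThesis} it is preperiodic under $M$, and therefore so is $\alpha$.

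The only point requiring care is the identity $\overline{\sigma(x)}=\sigma(\iota(x))$ for every embedding $\sigma$: this is precisely where the CM hypothesis is used (as opposed to merely being totally imaginary), since it forces complex conjugation on each image $\sigma(K)$ to coincide with the intrinsic automorphism $\iota$ independently of $\sigma$. Once the pairing of conjugates is made precise, the case analysis above is straightforward bookkeeping, and the conclusion follows immediately from the degree-three preperiodic result already invoked in the quartic classification.
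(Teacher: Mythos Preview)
Your proof is correct and follows essentially the same approach as the paper: both arguments use the CM structure to pair the six conjugates of $\alpha$ into three complex-conjugate pairs and then observe that $M(\alpha)$ lands in a (conjugate of the) totally real cubic subfield, whence the degree-$\leq 3$ preperiodicity result of \cite{ZhangThesis} finishes the job. The only cosmetic difference is that the paper reduces the case of four conjugates outside the unit circle to the case of two via $\alpha\mapsto\alpha^{-1}$, whereas you handle the $|S|=2$ case directly using the norm identity $\gamma_j\gamma_k\gamma_l=N_{K^+/\Q}(\gamma)\in\Q$; these are interchangeable tricks.
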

\begin{proof}
Denote by $\tau$ the element in the Galois group of the Galois closure of $K$ that is given by complex conjugation. Since $K$ is a CM-field, we have $\tau\sigma(\alpha) \in \sigma(K)$ for all $\alpha \in K$ and all $\sigma$ in the associated Galois group. 

Let $\alpha\in K$ be arbitrary. If $\alpha$ lies in a proper subfield of $K$, then the degree of $\alpha$ is at most three and hence $\alpha$ is preperiodic under iteration of $M$. So we assume that $\alpha$ is of degree six. Since $K$ is totally imaginary, we have that either $0$, $2$ ,$4$, or $6$ conjugates of $\alpha$ lie outside the unit circle. If all or no conjugates lie outside the unit circle, then $M(\alpha)$ is a rational integer and hence $\alpha$ is preperiodic under iteration of $M$. By replacing $\alpha$ by $\alpha^{-1}$ if necessary, we may assume that precisely $2$ conjugates of $\alpha$ lie outside the unit circle. Since $K$ is totally imaginary, it follows that $M(\alpha)=\ell\sigma(\alpha)\tau\sigma(\alpha)$ for some embedding $\sigma$, and some $\ell \in \mathbb{Z}$. Hence $M(\alpha)$ is a real element in the field $\sigma(K)$, which means that it lives in a proper subfield of a number field of degree $6$. As before we conclude that $\alpha$ must be preperiodic under iteration of $M$.
\end{proof}

\begin{rmk}
This proposition shows that in general it is not possible to classify fields in which all elements are preperiodic under iteration of $M$ solely in terms of the Galois group and the signature. Let $K$ be the field generated by some root of $x^6 + 6x^4 + 8x^2 + 1$, and denote with $K_{\mathcal{G}}$ its Galois closure over $\Q$. Then $K$ is totally imaginary and $\Gal(K_{\mathcal{G}}/\Q)\cong C_2\times S_4$. Moreover, $K$ is a CM-field, and hence all elements in $K$ are preperiodic under iteration of $M$ by the preceding proposition.

On the other hand, let $L$ be the field generated by some root $\alpha$ of $x^6 + 2x^5 + 3x^4 - 4x^3 + 3x^2 + 2x + 1$, and denote by $L_{\mathcal{G}}$ the Galois closure of $L$ over $\Q$. Then $L$ is again totally imaginary (and in particular, it has the same signature as $K$), and $\Gal(L_{\mathcal{G}}/\Q)\cong \Gal(K_{\mathcal{G}}/\Q)\cong C_2\times S_4$. However, in this case $L$ is \emph{not} a CM-field, and indeed, it is easy to check that $\alpha$ satisfies $M^4(\alpha)=M^2(\alpha)^{20}$, and hence, by Lemma \ref{lem:torsionfree}, $\alpha \in L$ is a wandering point.
\end{rmk}

We have already noted that in a totally imaginary Galois extension of degree $6$ all elements are preperiodic under iteration of $M$. We proceed by studying the other Galois extensions of degree $6$.

\begin{prop}\label{prop:Gal6}
Let $K/\Q$ be Galois of degree $6$. Then there is a wandering unit in $K$ if and only if $K$ is totally real.
\end{prop}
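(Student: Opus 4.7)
The ``only if'' direction is established in the paragraph immediately preceding the proposition: when $K$ is Galois and totally imaginary of degree $6$, $M(\alpha)$ lies in the maximal totally real subfield of $K$, which has degree at most $3$, forcing $\alpha$ to be preperiodic by the degree $\leq 3$ result of \cite{ZhangThesis}. For the ``if'' direction, suppose $K$ is totally real and Galois of degree $6$, so $G = \Gal(K/\Q)$ is isomorphic to either $C_6$ or $S_3$, and I would split the argument accordingly.

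For $G \cong C_6$, my plan is to defer to the general cyclic construction that Section \ref{sec:cyclic} develops for arbitrary cyclic extensions (and which handles $C_5$ in Proposition \ref{prop:cyclic5}); the idea is to use Dirichlet's unit theorem to choose an algebraic unit $\alpha$ with a prescribed log-embedding sign pattern, and to verify by direct computation that $M^k(\alpha) = M^\ell(\alpha)^n$ for some $k>\ell$ and some $n \geq 2$, at which point Lemma \ref{lem:torsionfree} concludes. For $G \cong S_3$, fix $\sigma$ of order $3$ and $\tau$ of order $2$ with $\tau\sigma\tau = \sigma^{-1}$. Since $\mathcal{O}_K^*$ has rank $5$, Dirichlet's unit theorem lets me pick an algebraic unit $\alpha \in K$ whose log-embedding vector (indexed by $G$) has exactly three positive components, arranged so that the three embeddings $g$ with $|g(\alpha)|>1$ do \emph{not} form the subgroup $A_3 = \langle\sigma\rangle$; this avoids $M(\alpha)$ collapsing into the real quadratic subfield $K^{\langle\sigma\rangle}$, so that $M(\alpha)$ has degree $6$. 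I would then use the relation $\tau g \tau = g^{-1}$ for $g \in \langle\sigma\rangle$ to compute $M(\alpha)$ and $M^2(\alpha)$ explicitly, in the spirit of the analysis in Propositions \ref{prop:Dih5} and \ref{prop:F5}, and either verify $M^k(\alpha) = M^\ell(\alpha)^n$ with $n \geq 2$ (invoking Lemma \ref{lem:torsionfree}) or show $M^2(\alpha) > M(\alpha)$ strictly and conclude via \cite[Theorem 2]{FPZ2020}.

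The main obstacle will be the bookkeeping in the $S_3$ case: I must ensure that the pattern of Galois conjugates lying outside the unit circle is stable in an appropriate sense under one further iteration of $M$, and that the Dirichlet unit can be refined to rule out every potential preperiodic collapse (into subfields or into a fixed Pisot or Salem number). A promising variant, adapted from Proposition \ref{prop:Dih5}, is to analyze the family of monomials $\alpha^{j}\sigma(\alpha)^{i}\tau\sigma(\alpha)^{i}$ and prove inductively that each iterate of $M$ maps this family into itself with strictly increasing exponents, precluding periodicity.
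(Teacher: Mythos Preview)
Your deferral in the $C_6$ case is a genuine gap: Section~\ref{sec:cyclic} and Proposition~\ref{prop:cyclic} treat only cyclic extensions of \emph{odd} degree $n\geq 5$ (the proof there splits into $n\equiv 1,3\pmod 4$ and uses that $\frac{n\pm1}{2}$ are integers), so you cannot simply invoke it for $C_6$. You would have to redo the analysis for even $n$, and the parity issues change the shape of the argument. Likewise, your $S_3$ sketch is too noncommittal: you list three possible endgames (an identity $M^k(\alpha)=M^\ell(\alpha)^n$, an appeal to \cite[Theorem~2]{FPZ2020}, or a monomial-family induction as in Proposition~\ref{prop:Dih5}) without checking that any of them actually closes; in particular \cite[Theorem~2]{FPZ2020} is applied in the paper only in degree~$4$, and the $D_5$-style monomial family does not obviously carry over to $S_3$ because the symmetry $\tau\sigma^k=\sigma^{-k}\tau$ acts differently when the degree is even.

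The paper's proof is more uniform and more elementary than what you outline. For \emph{both} $C_6$ and $S_3\cong D_3$ it does the same thing: use Dirichlet to pick a unit $\alpha$ whose conjugates satisfy a specific strict total ordering (equations~\eqref{eq:cyclic6} and~\eqref{eq:D6}), compute $\beta=M(\alpha)$ explicitly as a product of three conjugates, and then verify directly that the conjugates of $\beta$ satisfy the \emph{same} ordering pattern (using $|\sigma^a(\beta)|\cdot|\sigma^b(\beta)|=|N(\alpha)|=1$ at the crucial middle step). Since the pattern forces exactly three conjugates outside the unit circle at every stage, no iterate is ever a fixed point, and the orbit is infinite. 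No power relation $M^k(\alpha)=M^\ell(\alpha)^n$ is ever established, and Lemma~\ref{lem:torsionfree} is not used here; the mechanism is ``pattern-preservation'' rather than ``multiplicative recursion.'' I would recommend adopting this approach: once you write down the six conjugates of $\beta$ in each case, the required inequalities follow line by line from those assumed for $\alpha$.
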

\begin{rmk}
Note that a Galois extension of $\Q$ is either totally real or totally imaginary. Hence, this proposition gives a full classification of Galois extensions $K/\Q$ of degree $6$ where all elements are preperiodic.
\end{rmk}
\begin{proof}
By our preliminary discussion, it remains to prove that in a totally real Galois extension $K/\Q$ of degree $6$, there are wandering units. We have to distinguish between two cases, depending on the Galois group.

We start by considering the case $\Gal(K/\Q)\cong C_6$ and denote by $\sigma$ a generator of $\Gal(K/\Q)$. Let $\alpha$ be an algebraic unit in $K$ such that the Galois conjugates of $\alpha$ satisfy
\begin{equation}\label{eq:cyclic6}
\vert\alpha\vert > \vert \sigma^5(\alpha) \vert > \vert\sigma( \alpha)\vert  > 1 > \vert \sigma^4(\alpha) \vert > \vert \sigma^2\alpha) \vert > \vert \sigma^3(\alpha) \vert.
\end{equation}
That such an algebraic unit exists follows immediately from Dirichlet's unit theorem.

We have $M(\alpha)= \vert \alpha \sigma^5(\alpha) \sigma(\alpha) \vert$. We set $M(\alpha)=\beta$ and note that $\beta$ is again an algebraic unit in $K$. Calculating the Galois conjugates of $\beta$ gives
\begin{align*}
&\vert \beta \vert = \vert \alpha \sigma^5(\alpha) \sigma(\alpha) \vert, \quad &\vert \sigma^5(\beta)\vert = \vert \sigma(\alpha)\sigma^4(\alpha) \alpha \vert,\\ &\vert \sigma(\beta) \vert = \vert \sigma(\alpha) \alpha \sigma^2(\alpha) \vert, \quad &\vert \sigma^4(\beta) \vert = \vert \sigma^4(\alpha) \sigma^3(\alpha) \sigma^5(\alpha)\vert, \\ &\vert \sigma^2(\beta) \vert = \vert \sigma^2(\alpha) \sigma(\alpha) \sigma^3(\alpha)\vert, \quad &\vert \sigma^3(\beta) \vert = \vert \sigma^3(\alpha) \sigma^2(\alpha) \sigma^4(\alpha)\vert.
\end{align*}
By \eqref{eq:cyclic6} we immediately get
$$\vert \beta \vert > \vert \sigma^5(\beta) \vert > \vert \sigma(\beta)\vert > \vert \sigma^4(\beta)\vert > \vert \sigma^2(\beta) \vert > \vert \sigma^3(\beta) \vert. $$
Since $\alpha$ is an algebraic unit, we have $\vert \sigma(\beta) \vert \cdot \vert \sigma^4(\beta) \vert = \vert N(\alpha)\vert =1$, and hence, $\vert \sigma(\beta) \vert > 1 > \vert \sigma^4(\beta)\vert$. We have shown that $M(\alpha)=\beta$ has the same distribution of Galois conjugates as $\alpha$. It follows by induction that $\alpha$ is a wandering point.

The second case is $\Gal(K/\Q)\cong D_3$, but the proof is essentially the same. We denote by $\sigma$ and $\tau$ the generators of $\Gal(K/\Q)$ with the usual properties $\sigma^3 = \tau^2 =\id$ and $\tau \sigma\tau = \sigma^{-1}=\sigma^2$. 

Let again $\alpha$ be an algebraic unit in $K$ satisfying the distribution
\begin{equation}\label{eq:D6}
\vert\alpha\vert > \vert \tau\sigma(\alpha) \vert > \vert \tau\sigma^2(\alpha)\vert  > 1 > \vert \sigma(\alpha) \vert > \vert \sigma^2(\alpha) \vert > \vert \tau(\alpha) \vert,
\end{equation}
which exists thanks to Dirichlet's unit theorem. Then $\beta=M(\alpha)=\lvert \alpha \tau\sigma(\alpha)\tau\sigma^2(\alpha)\rvert$ is an algebraic unit in $K$, satisfying
\begin{align*}
&\vert \beta \vert = \vert \alpha \tau\sigma(\alpha)\tau\sigma^2(\alpha)\vert, \quad &\vert \tau\sigma(\beta) \vert = \vert \tau\sigma(\alpha) \alpha \sigma(\alpha) \vert, \\ &\vert \tau\sigma^2(\beta) \vert = \vert \tau\sigma^2(\alpha) \sigma^2(\alpha) \alpha \vert, \quad &\vert \sigma(\beta) \vert = \vert \sigma(\alpha) \tau(\alpha) \tau\sigma(\alpha)\vert, \\ &\vert \sigma^2(\beta) \vert = \vert \sigma^2(\alpha) \tau\sigma^2(\alpha) \tau(\alpha)\vert, \quad &\vert \tau(\beta) \vert = \vert \tau(\alpha) \sigma(\alpha) \sigma^2(\alpha)\vert.
\end{align*}
By the distribution of $\alpha$ in \eqref{eq:D6} and the fact that $\vert\tau\sigma^2(\beta)\vert \cdot \vert \sigma(\beta)\vert=1$, it follows that
\[
\vert\beta\vert > \vert \tau\sigma(\beta) \vert > \vert \tau\sigma^2(\beta)\vert  > 1 > \vert \sigma(\beta) \vert > \vert \sigma^2(\beta) \vert > \vert \tau(\beta) \vert.
\]
Hence, $\beta=M(\alpha)$ satisfies the same distribution of Galois conjugates as $\alpha$. Inductively it follows that $\alpha$ is a wandering unit under iteration of $M$. This proves the proposition.
\end{proof}

\section{Galois extensions of degree eight}

Assume that $K/\Q$ is a totally imaginary Galois extension of degree $8$. Then, for any $\alpha\in K$ the element $M(\alpha)$, as a real element, lives in a proper subfield of $K$. Hence, if there are no wandering points in the degree four subfield(s) of $K$, then there are no wandering points in $K$. Since we have a full classification of fields of degree $4$ in which no element is wandering, this gives a full classification of totally imaginary Galois extensions of degree $8$ in which no element is wandering.

Let $K/\Q$ be a totally real Galois extension. The Galois group is isomorphic to one of the groups $C_8$, $C_4\times C_2$, $D_4$, $Q_8$, or $C_2\times C_2 \times C_2$. In the first two cases, $K$ has a totally real subfield with Galois group $C_4$, and hence there are wandering units in $K$ by Proposition \ref{prop:totreal4}. In the $D_4$ case there are totally real subfields of degree $4$ with Galois group $D_4$. Again we can conclude by Proposition \ref{prop:totreal4} that $K$ contains wandering algebraic units. In the following we will study the other two remaining cases.

We first consider totally real $C_2^3$-extensions of $\Q$. These are fields of the form $K=\Q(\sqrt{d_1},\sqrt{d_2},\sqrt{d_3})$, with $d_1,d_2,d_3 \geq 2$ squarefree. For each $i\in\{1,2,3\}$ choose a fundamental unit $\beta_i \in \Q(\sqrt{d_i})$ with $\vert \beta_i\vert >1$. Moreover, let $n_1,n_2,n_3$ be positive integers, and set $\alpha_i=\beta_i^{n_i}$. Denote by $\sigma_i$ the nontrivial element in the Galois group of $\Q(\sqrt{d_i})/\Q$. Then $\vert \alpha_i\sigma_i(\alpha_i)\vert =1$ for all $i\in\{1,2,3\}$.  The Galois conjugates of $\alpha=\beta_1^{n_1}\beta_2^{n_2}\beta_3^{n_3}$ are
\begin{center}
\begin{tabular}{cccc}
$\alpha_1\alpha_2\alpha_3$  & $\sigma_1(\alpha_1)\alpha_2\alpha_3$  & $ \alpha_1 \sigma_2(\alpha_2) \sigma_3$  & $ \alpha_1\alpha_2\sigma_3(\alpha_3)$ \\
$\sigma_1(\alpha_1)\sigma_2(\alpha_2)\alpha_3$  & $ \sigma_1(\alpha_1)\alpha_2\sigma_3(\alpha_3)$  & $\alpha_1 \sigma_2(\alpha_2) \sigma_3(\sigma_3)$ & $\sigma_1(\alpha_1)\sigma_2(\alpha_2)\sigma_3(\alpha_3)$
\end{tabular}
\end{center}
We now choose the exponents $n_i$ such that $\vert \beta_1^{n_1}\vert$,  $\vert \beta_2^{n_2}\vert$, and $\vert \beta_3^{n_3}\vert$ are approximately of the same size. More precisely, we choose $n_1,n_2,n_3 \in \mathbb{N}$ such that the product of two of the values $\vert \beta_1^{n_1}\vert, \vert \beta_2^{n_2}\vert ,\vert \beta_3^{n_3}\vert$ is larger than the remaining one. (If we set $b_i = \log\abs{\beta_i}$ then we can assume without loss of generality that $\lceil b_1^{-1} \rceil b_1 < \lceil b_2^{-1} \rceil b_2 < \lceil b_3^{-1} \rceil b_3$, and then one can check that a choice of $n_1= \lceil b_1^{-1}\rceil (\lceil b_2 \rceil + \lceil b_3 \rceil), n_2 = \lceil b_2^{-1} \rceil \lceil b_3 \rceil, n_3 =\lceil b_3^{-1} \rceil \lceil b_2 \rceil$ suffices.) Then we get
\begin{itemize}
\item $\vert \alpha_1\alpha_2\alpha_3 \vert>1$,
\item $\vert \sigma_1(\alpha_1)\alpha_2\alpha_3\vert , \vert \alpha_1 \sigma_2(\alpha_2) \alpha_3 \vert , \vert \alpha_1\alpha_2\sigma_3(\alpha_3) \vert >1$,
\item $\vert \sigma_1(\alpha_1)\sigma_2(\alpha_2)\alpha_3 \vert , \vert \sigma_1(\alpha_1)\alpha_2\sigma_3(\alpha_3) \vert , \vert \alpha_1 \sigma_2(\alpha_2) \sigma_3(\alpha_3)\vert <1$, and
\item $\vert \sigma_1(\alpha_1)\sigma_2(\alpha_2)\sigma_3(\alpha_3) \vert <1$.
\end{itemize}
In particular, $\alpha$ is a Perron number (and hence torsion-free) and the Mahler measure of $\alpha$ is
\[
M(\alpha)=\vert (\alpha_1\alpha_2\alpha_3)(\sigma_1(\alpha_1)\alpha_2\alpha_3)(\alpha_1 \sigma_2(\alpha_2) \alpha_3)(\alpha_1\alpha_2\sigma_3(\alpha_3))\vert = \vert \alpha_1^2 \alpha_2^2 \alpha_3^2 \vert =\alpha^2. 
\]
It follows from Lemma \ref{lem:torsionfree} that $\alpha$ is a wandering algebraic unit under iteration of $M$.

This proves the following proposition:

\begin{prop}\label{prop:3-quad}
Let $K$ be a totally real $C_2^3$-extension of $\Q$. Then there are wandering units under iteration of the Mahler measure.
\end{prop}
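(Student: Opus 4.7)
The plan is to produce an explicit wandering unit in $K=\Q(\sqrt{d_1},\sqrt{d_2},\sqrt{d_3})$ by multiplying together carefully scaled fundamental units from the three quadratic subfields. Pick $\beta_i\in\Q(\sqrt{d_i})$ a fundamental unit with $|\beta_i|>1$, and set $\alpha=\beta_1^{n_1}\beta_2^{n_2}\beta_3^{n_3}$ for positive integers $n_i$ to be chosen. The key requirement on the $n_i$ is a triangle-type condition: each of the three magnitudes $|\beta_i|^{n_i}$ must be smaller than the product of the other two. Writing $b_i=\log|\beta_i|$ and relabeling so $\lceil b_1^{-1}\rceil b_1\leq \lceil b_2^{-1}\rceil b_2\leq \lceil b_3^{-1}\rceil b_3$, the choice $n_1=\lceil b_1^{-1}\rceil(\lceil b_2\rceil+\lceil b_3\rceil)$, $n_2=\lceil b_2^{-1}\rceil\lceil b_3\rceil$, $n_3=\lceil b_3^{-1}\rceil\lceil b_2\rceil$ works, and shows the condition is always achievable.

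Next I would enumerate the eight Galois conjugates of $\alpha$ indexed by subsets $S\subseteq\{1,2,3\}$ recording which of the nontrivial involutions $\sigma_i\in\Gal(\Q(\sqrt{d_i})/\Q)$ have been applied. The triangle condition then forces the conjugates with $|S|\in\{0,1\}$ (four in total) to lie outside the unit circle and those with $|S|\in\{2,3\}$ (four in total) to lie inside. Since the dominant conjugate $\alpha_1\alpha_2\alpha_3$ strictly exceeds every other in absolute value, $\alpha$ is a Perron number, hence torsion-free.

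Finally I would compute $M(\alpha)$ as the product of the four conjugates exceeding $1$ in absolute value. In that product each $\beta_i^{n_i}$ appears three times and each $\sigma_i(\beta_i^{n_i})$ appears once; using the unit relation $\beta_i^{n_i}\sigma_i(\beta_i^{n_i})=\pm 1$ the latter contribution cancels one copy of the former, leaving $(\beta_1^{n_1}\beta_2^{n_2}\beta_3^{n_3})^2=\alpha^2$. Thus $M(\alpha)=\alpha^2$, and Lemma \ref{lem:torsionfree} (with $k=1$, $n=2$) concludes that $\alpha$ is wandering. The main point requiring care is verifying the triangle condition and the resulting sign pattern of the conjugates; the closing computation of $M(\alpha)$ is then a purely combinatorial bookkeeping exercise using the fact that the Galois group $C_2^3$ acts independently on the three quadratic factors.
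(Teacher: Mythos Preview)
Your proposal is correct and follows essentially the same approach as the paper: the same construction $\alpha=\beta_1^{n_1}\beta_2^{n_2}\beta_3^{n_3}$ from fundamental units of the three quadratic subfields, the same triangle condition on the $|\beta_i|^{n_i}$ (with the identical explicit choice of exponents $n_i$), the same $|S|\in\{0,1\}$ versus $|S|\in\{2,3\}$ split of the eight conjugates, and the same conclusion $M(\alpha)=\alpha^2$ feeding into Lemma~\ref{lem:torsionfree}. Your bookkeeping via the unit relation $\beta_i^{n_i}\sigma_i(\beta_i^{n_i})=\pm1$ is exactly what the paper's direct computation encodes.
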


Now we assume that $K/\Q$ is a totally real Galois extension with $\Gal(K/\Q)\cong Q_8$. This is, $\Gal(K/\Q)$ is generated by two elements $\sigma, \tau$, satisfying $\sigma^4=\id$, $\tau^2=\sigma^2$, and $\tau\sigma\tau^{-1}=\sigma^3$. Hence, a full list of embeddings of $K$ is given by $\id,\sigma,\sigma^2,\sigma^3,\tau,\tau^3,\sigma\tau,\sigma\tau^3$. By Dirichlet's unit theorem, there is an algebraic unit in $K$ satisfying
\begin{equation}\label{eq:distributionQ8}
\abs{\alpha}>\abs{\tau(\alpha)}>\abs{\sigma\tau^3(\alpha)}>\abs{\sigma^3(\alpha)}>1>\abs{\sigma\tau(\alpha)}>\abs{\sigma(\alpha)}>\abs{\tau^3(\alpha)}>\abs{\sigma^2(\alpha)},
\end{equation}
and
\begin{equation}\label{eq:alphaQ8}
\abs{\alpha\sigma\tau(\alpha)\sigma(\alpha)\tau^3(\alpha)}>1.
\end{equation}
This latter condition just means that $\alpha$ is much larger than the other conjugates. The absolute values of the Galois conjugates of the Mahler measure of $\alpha$ are
\begin{itemize}
\item $M(\alpha)=\abs{\alpha\tau(\alpha)\sigma\tau^3(\alpha)\sigma^3(\alpha)}$
\item $\abs{\sigma(M(\alpha))}=\abs{\sigma(\alpha)\sigma\tau(\alpha)\tau(\alpha)\alpha}$
\item $\abs{\sigma^2(M(\alpha))}=\abs{\sigma^2(\alpha)\tau^3(\alpha)\sigma\tau(\alpha)\sigma(\alpha)}$
\item $\abs{\sigma^3(M(\alpha))}=\abs{\sigma^3(\alpha)\sigma\tau^3(\alpha)\tau^3(\alpha)\sigma^2(\alpha)}$
\item $\abs{\tau(M(\alpha))}=\abs{\tau(\alpha)\sigma^2(\alpha)\sigma^3(\alpha)\sigma\tau(\alpha)}$
\item $\abs{\tau^3(M(\alpha))}=\abs{\tau^3(\alpha)\alpha\sigma(\alpha)\sigma\tau^3(\alpha)}$
\item $\abs{\sigma\tau(M(\alpha))}=\abs{\sigma\tau(\alpha)\sigma^3(\alpha)\alpha\tau^3(\alpha)}$
\item $\abs{\sigma\tau^3(M(\alpha))}=\abs{\sigma\tau^3(\alpha)\sigma(\alpha)\sigma^2(\alpha)\tau(\alpha)}$
\end{itemize}
By our assumption \eqref{eq:alphaQ8} on $\alpha$, we see that $M(\alpha), \abs{\sigma(M(\alpha))}, \abs{\tau^3(M(\alpha))}, \abs{\sigma\tau(M(\alpha))}>1$. Moreover, since $\alpha$ is an algebraic unit, we get
\begin{align*}
1&=\abs{M(\alpha)}\cdot \abs{\sigma^2(M(\alpha))}=\abs{\sigma(M(\alpha))}\cdot\abs{\sigma^3(M(\alpha))} \\ &=\abs{\tau(M(\alpha))}\cdot\abs{\tau^3(M(\alpha))}=\abs{\sigma\tau(M(\alpha))}\cdot\abs{\sigma\tau^3(M(\alpha))}.
\end{align*}
Hence, using once again that $\alpha$ is an algebraic unit, we get
\begin{align*}
M^{2}(\alpha)&=\abs{M(\alpha)\sigma(M(\alpha))\tau^3(M(\alpha))\sigma\tau(M(\alpha))}\\
 &=\abs{\alpha^4 \tau(\alpha)^2\sigma\tau^3(\alpha)^2\sigma^3(\alpha)^2\sigma\tau(\alpha)^2\tau^3(\alpha)^2\sigma(\alpha)^2} = \abs{\frac{\alpha}{\sigma^2(\alpha)}}^2.
\end{align*}
It follows immediately that the absolute values of the Galois conjugates of $M^{2}(\alpha)$ are
\[
\abs{\frac{\alpha}{\sigma^2(\alpha)}}^2, \abs{\frac{\sigma^2(\alpha)}{\alpha}}^2, \abs{\frac{\sigma(\alpha)}{\sigma^3(\alpha)}}^2, \abs{\frac{\sigma^3(\alpha)}{\sigma(\alpha)}}^2, \abs{\frac{\tau(\alpha)}{\tau^3(\alpha)}}^2, \abs{\frac{\tau^3(\alpha)}{\tau(\alpha)}}^2, \abs{\frac{\sigma\tau(\alpha)}{\sigma\tau^3(\alpha)}}^2, \abs{\frac{\sigma\tau^3(\alpha)}{\sigma\tau(\alpha)}}^2.
\]
By \eqref{eq:distributionQ8} we find
\[
M^{3}(\alpha)=\abs{\frac{\alpha}{\sigma^2(\alpha)}}^2 \abs{\frac{\sigma^3(\alpha)}{\sigma(\alpha)}}^2 \abs{\frac{\tau(\alpha)}{\tau^3(\alpha)}}^2 \abs{\frac{\sigma\tau^3(\alpha)}{\sigma\tau(\alpha)}}^2 = \abs{\frac{M(\alpha)}{\sigma^2(M(\alpha))}}^2.
\]
We have already noted that $\abs{\sigma^2(M(\alpha))}=\abs{M(\alpha)}^{-1}$. Thus, finally we get
\[
M^{3}(\alpha)=M(\alpha)^4.
\]
By Lemma \ref{lem:torsionfree} we can conclude that $\alpha$ is a wandering unit under iteration of $M$. This proves that in every totally real $Q_8$-extension of $\Q$ there are wandering units under iteration of $M$. Combining this with Proposition \ref{prop:3-quad} and our preliminary discussion yields:
\begin{thm}
If $K/\Q$ is a totally real Galois extension of degree $8$, then $K$ contains algebraic units which are wandering under iteration of $M$.
\end{thm}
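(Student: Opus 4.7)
The plan is to reduce to the classification of groups of order $8$ and then invoke, for each possible Galois group, either a subfield argument combined with Proposition \ref{prop:totreal4}, or one of the two constructions that were carried out in detail earlier in this section.

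First I would recall that the Galois group $\Gal(K/\Q)$ of a group of order $8$ is isomorphic to one of $C_8$, $C_4\times C_2$, $D_4$, $Q_8$, or $C_2\times C_2\times C_2$. In the first three cases, I would use the following subfield observation: any algebraic unit in a subfield $L\subseteq K$ remains an algebraic unit in $K$, and since the Mahler measure depends only on the minimal polynomial, its iteration does not depend on the ambient number field; hence if $L$ contains a wandering unit, so does $K$. For $\Gal(K/\Q)\cong C_8$ or $C_4\times C_2$, the fixed field of the unique subgroup of index $4$ which is cyclic gives a totally real subfield $L\subseteq K$ with $\Gal(L/\Q)\cong C_4$, so Proposition \ref{prop:totreal4} produces a wandering unit in $L$ and thus in $K$. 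For $\Gal(K/\Q)\cong D_4$, I would exhibit a non-normal subgroup of index $4$ of order $2$ whose fixed field is a totally real quartic extension of $\Q$ with Galois closure having Galois group $D_4$; again Proposition \ref{prop:totreal4} applies.

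The two cases requiring an intrinsic argument are $C_2^3$ and $Q_8$, and these are precisely what was handled in the explicit computations just above the theorem statement. For $C_2^3$ I would directly invoke Proposition \ref{prop:3-quad}, which produced a wandering unit of the form $\beta_1^{n_1}\beta_2^{n_2}\beta_3^{n_3}$ with the exponents chosen via Dirichlet's unit theorem so that the Mahler measure satisfies $M(\alpha)=\alpha^{2}$, whence Lemma \ref{lem:torsionfree} gives that $\alpha$ is wandering. For $\Gal(K/\Q)\cong Q_8$ I would invoke the explicit construction producing an algebraic unit $\alpha$ with the distribution \eqref{eq:distributionQ8} and the dominance condition \eqref{eq:alphaQ8}, for which the computation above yields $M^3(\alpha)=M(\alpha)^4$, and Lemma \ref{lem:torsionfree} again certifies that $\alpha$ wanders.

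The proof therefore amounts to assembling these five cases; no further ingredient is required. The only thing to double-check is the subfield reduction for $C_8$, $C_4\times C_2$, and $D_4$: namely, that the distinguished subfield is indeed totally real (immediate, since $K$ is) and that the Galois group of the Galois closure of this subfield over $\Q$ is as claimed in Proposition \ref{prop:totreal4}. This is routine group theory, and it is the only part of the argument that might be considered a mild obstacle; it is handled simply by listing the subgroups of each of the three groups and identifying which quotient (or rather, which coset structure) corresponds to the Galois closure of the fixed field. With these verifications in place, the theorem follows by combining the five cases.
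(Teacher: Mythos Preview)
Your proposal is correct and follows essentially the same approach as the paper: split into the five groups of order $8$, reduce $C_8$, $C_4\times C_2$, and $D_4$ to Proposition~\ref{prop:totreal4} via a suitable quartic subfield, and handle $C_2^3$ and $Q_8$ by the two explicit constructions carried out just above the theorem (Proposition~\ref{prop:3-quad} and the $Q_8$ computation yielding $M^3(\alpha)=M(\alpha)^4$). The only cosmetic slip is the word ``unique'' in the $C_4\times C_2$ case (there are three subgroups of index $4$, two of which give a $C_4$-quotient), but existence is all that is needed.
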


\section{Galois extensions of degree nine}

Let $K/\Q$ be a Galois extension of degree nine. Then its Galois group is either isomorphic to $C_9$ or $C_3\times C_3$. If it is $C_9$ then $K$ contains a wandering unit by the more general statement Proposition \ref{prop:cyclic} proved in Section \ref{sec:cyclic}. Hence, the next proposition shows that $K$ contains a wandering unit in either case. 

\begin{prop}\label{prop:C3C3}
Let $K/\Q$ be a Galois extension with Galois group isomorphic to $C_3 \times C_3$. Then there are wandering units in $K$.
\end{prop}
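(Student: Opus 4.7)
The plan is to adapt the construction from Proposition \ref{prop:3-quad} for totally real $C_2^3$-extensions, replacing the three quadratic subfields used there by two of the four cubic subfields of $K$. First note that $K$ must be totally real, since $C_3\times C_3$ contains no element of order two to serve as complex conjugation. Let $\sigma,\tau$ be generators of $\Gal(K/\Q)$, and set $L_1=K^{\langle\tau\rangle}$ and $L_2=K^{\langle\sigma\rangle}$: two totally real cubic subfields with $L_1L_2=K$. Each $\mathcal{O}_{L_i}^*$ has rank $2$ by Dirichlet's unit theorem, with its logarithmic image a full-rank lattice in the trace-zero plane of $\mathbb{R}^3$. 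Hence I can pick a unit $\beta_1\in\mathcal{O}_{L_1}^*$ with $\log|\beta_1|$ and $\log|\sigma(\beta_1)|$ both positive (so $\log|\sigma^2(\beta_1)|<0$), and a unit $\beta_2\in\mathcal{O}_{L_2}^*$ with $\log|\beta_2|$ and $\log|\tau(\beta_2)|$ both positive, arranging the four positive log-magnitudes to be of comparable size.

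Set $\alpha=\beta_1\beta_2\in\mathcal{O}_K^*$. Its nine Galois conjugates are $\sigma^i\tau^j(\alpha)=\sigma^i(\beta_1)\tau^j(\beta_2)$, since $\sigma$ fixes $L_2$ and $\tau$ fixes $L_1$. Inspecting the $3\times 3$ array of log-magnitudes shows that, with the choices above, exactly the four conjugates indexed by $(i,j)\in\{(0,0),(0,1),(1,0),(1,1)\}$---the images under $\id,\tau,\sigma,\sigma\tau$---lie outside the unit circle. Therefore
\[
M(\alpha)=\bigl|\alpha\,\tau(\alpha)\,\sigma(\alpha)\,\sigma\tau(\alpha)\bigr|=\bigl|(\beta_1\sigma(\beta_1))^{2}(\beta_2\tau(\beta_2))^{2}\bigr|.
\]
Using $\beta_1\sigma(\beta_1)\sigma^2(\beta_1)=\pm1$ and $\beta_2\tau(\beta_2)\tau^2(\beta_2)=\pm1$, together with $\sigma^2(\beta_1)\tau^2(\beta_2)=\sigma^2\tau^2(\alpha)$, this collapses (the outer square absorbs any sign) to
\[
M(\alpha)=\bigl(\sigma^2\tau^2(\alpha)\bigr)^{-2}.
\]

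From here I invoke Galois-invariance of $M$ and $M(x)=M(x^{-1})$ to compute
\[
M^2(\alpha)=M\!\bigl((\sigma^2\tau^2(\alpha))^{-2}\bigr)=M\!\bigl((\sigma^2\tau^2(\alpha))^{2}\bigr)=M(\alpha^{2})=M(\alpha)^{2},
\]
the last equality holding because $\alpha^{2}$ still has degree nine over $\Q$. Since $|\alpha|>1$, $\alpha$ is not a root of unity, so Lemma~\ref{lem:torsionfree} gives that $\alpha$ is wandering under $M$.

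The main point to handle carefully is the genericity behind the last steps: I need $\alpha$ (and $\alpha^2$) to actually generate $K$, so that no two conjugates of $\alpha$ coincide or are negatives of each other. The condition that $\alpha$ generates $K$ fails only if some nontrivial $g\in\Gal(K/\Q)$ satisfies $g(\alpha)=\alpha$, which is a single algebraic relation between $\beta_1$ and $\beta_2$ that is avoided by a suitable perturbation within the lattice regions described above. Once $\alpha$ generates $K$, so does $\alpha^2$: if $g(\alpha)=-\alpha$ for some nonidentity $g$, then $g$ has order three and $g^2(\alpha)=\alpha$, forcing $\alpha$ into a proper subfield, a contradiction. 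So generic choices of $\beta_1,\beta_2$ produce the desired wandering unit.
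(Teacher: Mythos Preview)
Your argument is correct, and the overall architecture---building a wandering unit as a product $\beta_1\beta_2$ with $\beta_i$ a unit in a cubic subfield $L_i$, then applying Lemma~\ref{lem:torsionfree}---matches the paper. The execution differs in an interesting way, though. The paper takes $\beta_1,\beta_2$ to be \emph{Pisot} units (one conjugate outside the unit circle), scales them so that $\lvert\alpha_1\beta_3\rvert>1$ and $\lvert\alpha_3\beta_1\rvert>1$, and finds that exactly five of the nine conjugates of $\gamma=\alpha_1\beta_1$ lie outside the unit circle; this gives $M(\gamma)=\gamma^2$ directly, and $\gamma$ is visibly Perron and hence torsion-free. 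Your choice (two conjugates outside in each cubic) puts four conjugates outside and yields $M(\alpha)=(\sigma^2\tau^2(\alpha))^{-2}$, which costs you the extra step $M^2(\alpha)=M(\alpha)^2$ and the verification that $\alpha^2$ still has degree~$9$. Both routes work; the paper's is one step shorter and avoids the torsion-freeness check entirely.

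One remark: your closing ``genericity'' discussion is unnecessary---$\alpha=\beta_1\beta_2$ automatically generates $K$. If some nontrivial $\sigma^i\tau^j$ fixed $\alpha$, then $\sigma^i(\beta_1)/\beta_1=\beta_2/\tau^j(\beta_2)$ would lie in $L_1\cap L_2=\Q$, hence equal $\pm1$; but $\sigma^i(\beta_1)=\pm\beta_1$ forces $\beta_1\in\Q$ (using that every nontrivial element has order~$3$), contradicting $\lvert\beta_1\rvert>1$. So no perturbation is needed.
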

\begin{proof}
By Galois theory, we can write $K$ as the compositum of two linearly disjoint fields $K_1$ and $K_2$, both of degree $3$. Since $K$ is Galois and of odd degree, it must be totally real. In particular $K_1$ and $K_2$ are totally real.

Let $\alpha$ be a Pisot unit in $K_1$ and let $\beta$ be a Pisot unit in $K_2$ (recall that any real number field of degree greater than $1$ contains a Pisot unit). Denote by $\alpha_1$, $\alpha_2$, $\alpha_3$ the Galois conjugates of $\alpha$, and by $\beta_1$, $\beta_2$, $\beta_3$ the Galois conjugates of $\beta$, ordered such that
\[
\vert \alpha_1 \vert > \vert \alpha_2\vert > \vert \alpha_3 \vert \qquad \text{ and } \qquad \vert \beta_1 \vert > \vert \beta_2\vert > \vert \beta_3 \vert.
\]
Since $K_1$ and $K_2$ are linearly disjoint, the Galois conjugates of $\alpha_1 \beta_1$ are precisely
\[
\alpha_1 \beta_1, \quad \alpha_1 \beta_2, \quad \alpha_1 \beta_3, \quad \alpha_2 \beta_1, \quad \alpha_2 \beta_2, \quad \alpha_2 \beta_3, \quad \alpha_3 \beta_1, \quad \alpha_3 \beta_2, \quad \alpha_3 \beta_3. 
\]
We may raise $\alpha$ and $\beta$ to some power to assume that $\alpha$ and $\beta$ are sufficiently close together to satisfy 
\[
\vert \alpha_1 \beta_3\vert > 1 \qquad \text{ and } \qquad \vert \beta_1 \alpha_3 \vert >1.
\]
Then we find
\[
M(\alpha_1\beta_1) =\vert (\alpha_1 \beta_1)(\alpha_1 \beta_2)(\alpha_1 \beta_3)(\alpha_2 \beta_1)(\alpha_3 \beta_1)\vert = \vert \alpha_1^2 \beta_1^2 \vert =(\alpha_1\beta_1)^2.
\]
By construction $\alpha_1\beta_1$ is a Perron number, and hence torsion free, and hence $\alpha_1\beta_1\in K$ is a wandering unit under iteration of $M$ by Lemma \ref{lem:torsionfree}.
\end{proof}

\section{Cyclic extensions of odd degree $\geq 5$}\label{sec:cyclic}
Let $n\geq 5$ be an odd integer, and let $K/\QQ$ be a Galois extension with $\Gal(K/\QQ) \cong C_n$. Note that $K$ is necessarily totally real, as a Galois extension of odd degree. For any $\alpha \in K$ and any $\sigma \in \Gal(K/\Q)$ we define $\consalph{i}=\sigma^i(\alpha)$. In particular, we always have $\consalph{0}=\alpha$, and for any $i\in\mathbb{Z}$ we have $\consalph{i}=\consalph{i+n}$.

In order to prove the main result of this section, we need a technical lemma:
\begin{lemma}\label{lem:distribution2}
Let $\sigma$ be some generator of $\Gal(K/\Q)\cong C_n$ and let $\alpha \in K$ be an algebraic unit such that
\begin{equation}\label{eq:alphadistribution2}
\abs{\consalph{0}}>\abs{\consalph{1}}>\abs{\consalph{-1}}>\abs{\consalph{2}}>\abs{\consalph{-2}}>\ldots,
\end{equation} that is,

\begin{enumerate}[label=(\roman*),start=1]
\item $\abs{\consalph{-i}} > \abs{\consalph{1+i}} ~\forall~i\in\{0,\ldots,\frac{n-3}{2}\}$.
\item $\abs{\consalph{i}}>\abs{\consalph{-i}} ~\forall~i\in\{1,\ldots,\frac{n-1}{2}\}$.
\end{enumerate}
Assume that precisely $\frac{n+1}{2}$ conjugates of $\alpha$
lie outside of the unit circle. Then $\beta= M(\alpha)$ has either precisely $\frac{n+1}{2}$ or precisely $\frac{n-1}{2}$ conjugates outside the unit circle. Moreover, we have:
\begin{itemize}
\item If $n\equiv 3 \mod{4}$ the conjugates of $\beta$ are distributed in the following fashion:
\begin{equation}\label{eq:betadistribution2}
\abs{\consbet{0}}>\abs{\consbet{-1}}>\abs{\consbet{1}}>\abs{\consbet{-2}}>\abs{\consbet{2}}>\ldots
\end{equation}
That is,
\begin{enumerate}[label=(\alph*),start=1]
\item $\abs{\consbet{i}}>\abs{\consbet{-1-i}} ~\forall~i\in\{0,\ldots,\frac{n-3}{2}\}$, and 
\item $\abs{\consbet{-i}} > \abs{\consbet{i}} ~\forall~i\in\{1,\ldots,\frac{n-1}{2}\}$.
\end{enumerate}
\item If $n\equiv 1 \mod{4}$ the Galois conjugates of $\beta$ are distributed as in \eqref{eq:alphadistribution2}.
\end{itemize}
\end{lemma}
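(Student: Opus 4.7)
The plan is a direct computation based on the formula $b_j := \abs{\sigma^j(\beta)} = \prod_{i \in S+j} a_i$, where $a_i = \abs{\consalph{i}}$ and $S$ is the index set of $\alpha$-conjugates outside the unit circle. The ordering hypothesis on $\alpha$ pins down $S$ completely: when $n = 4m+1$ one has $S = \{-m,\ldots,m\}$, and when $n = 4m+3$ one has $S = \{-m,\ldots,m+1\}$. Hence $\beta = \pm \prod_{i \in S} \consalph{i}$, and the unit hypothesis gives $\prod_i a_i = 1$, which is used throughout.

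To verify each inequality in the claimed ordering (a)--(b), or the analogous inequality when $n \equiv 1 \pmod 4$, for each asserted $b_j > b_{j'}$ I would compute the ratio as $\prod_{i \in S_j \setminus S_{j'}} a_i \big/ \prod_{i \in S_{j'} \setminus S_j} a_i$ and pair up indices in the symmetric difference. In the symmetric case $n = 4m+1$, the natural pairing $i \leftrightarrow -i$ yields factors $a_i/a_{-i} > 1$ via hypothesis (ii) on $\alpha$ (for comparisons $b_i$ vs.\ $b_{-i}$), while the pairing $i \leftrightarrow -i+1$ yields factors $a_{-i}/a_{i+1} > 1$ via hypothesis (i) (for $b_{-i}$ vs.\ $b_{i+1}$). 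The asymmetric case $n = 4m+3$ is analogous, except that the off-center $S$ forces an extra unit shift in the pairing, and it is precisely this shift that produces the ``swapped'' ordering \eqref{eq:betadistribution2}.

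The count claim will then follow from $N(\beta) = \prod_j b_j = \left(\prod_i a_i\right)^{\abs{S}} = \pm 1$ together with two two-term product identities. First, the critical pair $(b_m, b_{-m})$ (respectively $(b_m, b_{-(m+1)})$ when $n \equiv 3 \pmod 4$) satisfies $b_m \cdot b_{-m} = a_0$ (resp.\ $b_m \cdot b_{-(m+1)} = a_0$), which is $>1$; combined with the just-established ordering this forces the $\frac{n-1}{2}$-th largest $b_j$ to exceed $1$. Second, an analogous identity involving $(b_{m+1}, b_{-m})$ or $(b_{m+1}, b_{-(m+1)})$ yields a product equal to a specific $a_i$ with $\abs{i}$ near $\frac{n-1}{2}$, hence $< 1$, forcing the $\frac{n+3}{2}$-th largest $b_j$ and below to lie under $1$. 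The count is therefore either $\frac{n+1}{2}$ or $\frac{n-1}{2}$ according to whether the ``middle'' $b_j$ lies above or below $1$.

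I expect the main technical obstacle to be wrap-around in $\Z/n\Z$: when $\abs{j}$ is close to $n/2$, the shifted arc $S+j$ straddles the cut and must be expressed as a union of two intervals in the representative set $\{-\tfrac{n-1}{2},\ldots,\tfrac{n-1}{2}\}$. This breaks the clean ``pair $i$ with $-i$'' bookkeeping for the symmetric difference and forces a case split on $\abs{j}$ relative to $m$. The $n \equiv 3 \pmod 4$ case additionally requires adjusting the pairing by a constant, but the same pairing principle applies in every sub-case.
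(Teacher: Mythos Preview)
Your approach is correct and essentially matches the paper's: both express $\consbet{j}$ as $\prod_{i\in S+j}\consalph{i}$, verify the claimed ordering of the $\abs{\consbet{j}}$ by reducing each comparison to hypotheses (i)/(ii) on $\alpha$, and obtain the count via exactly the two-term norm identities you describe (the paper phrases them as contradiction arguments, but the products are the same). The only difference is packaging: where you pair up the full symmetric difference at once via the involutions $i\mapsto -i$ and $i\mapsto 1-i$, the paper instead telescopes the ratios $\gamma_i=\abs{\consbet{i}}/\abs{\consbet{-i}}$ and $\mu_i=\abs{\consbet{i}}/\abs{\consbet{-1-i}}$ through the four-term quotients $\gamma_i/\gamma_{i-1}$ and $\mu_i/\mu_{i-1}$, which confines the wrap-around bookkeeping to two indices at a time rather than handling the whole arc.
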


If we assume for the moment the validity of this lemma, then a converse statement is also true:
\begin{lemma}\label{lem:distribution1}
Let $\sigma$ be a generator of $\Gal(K/\Q)\cong C_n$, and let $\alpha \in K$ be an algebraic unit with Galois conjugates distributed as in \eqref{eq:betadistribution2} with precisely $\frac{n+1}{2}$ conjugates outside the unit circle. Then $\beta=M(\alpha)$ has either precisely $\frac{n+1}{2}$ or precisely $\frac{n-1}{2}$ conjugates outside the unit circle, and
\begin{itemize}
\item if $n\equiv 3 \mod{4}$ the Galois conjugates of $\beta$ are distributed as in \eqref{eq:alphadistribution2}.
\item if $n\equiv 1\mod{4}$ the Galois conjugates of $\beta$ are distributed as in \eqref{eq:betadistribution2}.
\end{itemize}
\end{lemma}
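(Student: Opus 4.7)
The statement of Lemma \ref{lem:distribution1} is the mirror image of Lemma \ref{lem:distribution2}, and my plan is to deduce it directly from Lemma \ref{lem:distribution2} by replacing the generator $\sigma$ by its inverse. The key observation is that the flipped distribution \eqref{eq:betadistribution2} with respect to $\sigma$ is the same as the standard distribution \eqref{eq:alphadistribution2} with respect to $\sigma' := \sigma^{-1}$. Indeed, if we set $(\sigma')^i(\alpha) = \sigma^{-i}(\alpha) = \consalph{-i}$, then the hypothesis
$$|\consalph{0}| > |\consalph{-1}| > |\consalph{1}| > |\consalph{-2}| > |\consalph{2}| > \cdots$$
becomes $|(\sigma')^0(\alpha)| > |(\sigma')^1(\alpha)| > |(\sigma')^{-1}(\alpha)| > |(\sigma')^2(\alpha)| > |(\sigma')^{-2}(\alpha)| > \cdots$, which is precisely the standard distribution required in the hypothesis of Lemma \ref{lem:distribution2}. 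Moreover, the count of conjugates of $\alpha$ outside the unit circle is intrinsic to $\alpha$ and independent of the choice of generator, so the hypothesis that exactly $(n+1)/2$ conjugates lie outside is preserved.

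Applying Lemma \ref{lem:distribution2} to $\alpha$ with generator $\sigma'$, one concludes immediately that $\beta = M(\alpha)$ has either $(n+1)/2$ or $(n-1)/2$ conjugates outside the unit circle. Furthermore, if $n \equiv 3 \pmod 4$ the conjugates of $\beta$ satisfy the flipped distribution with respect to $\sigma'$, and if $n \equiv 1 \pmod 4$ they satisfy the standard distribution with respect to $\sigma'$. Translating back via the identity $|(\sigma')^j(\beta)| = |\consbet{-j}|$, the flipped distribution with respect to $\sigma'$ becomes the standard distribution \eqref{eq:alphadistribution2} with respect to $\sigma$, and the standard distribution with respect to $\sigma'$ becomes the flipped distribution \eqref{eq:betadistribution2} with respect to $\sigma$. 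This gives exactly the two cases claimed in Lemma \ref{lem:distribution1}.

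The only point to verify is that conditions (i), (ii) in Lemma \ref{lem:distribution2} and conditions (a), (b) in its conclusion are genuinely interchanged under the substitution $\sigma \leftrightarrow \sigma^{-1}$; this is a routine relabeling of indices, using only that the two pairs of conditions involve the indices $i$ and $-i$ in symmetric fashion. No additional computation is needed beyond what is already present in Lemma \ref{lem:distribution2}. The main conceptual step, and the only potential obstacle, is recognizing this symmetry in the first place: once the two distributions are seen to be related by the automorphism $\sigma \mapsto \sigma^{-1}$ of the cyclic Galois group, Lemma \ref{lem:distribution1} reduces to a purely bookkeeping exercise on top of Lemma \ref{lem:distribution2}.
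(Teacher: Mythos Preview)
Your proposal is correct and follows exactly the same approach as the paper: the authors' proof is the one-line observation that Lemma \ref{lem:distribution1} is precisely Lemma \ref{lem:distribution2} with $\sigma$ replaced by $\sigma^{-1}$, and since Lemma \ref{lem:distribution2} holds for any generator, the two lemmas are equivalent. Your write-up simply spells out the index relabeling in more detail than the paper does.
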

\begin{proof}
This is precisely the statement from Lemma \ref{lem:distribution2} with $\sigma$ replaced by $\sigma^{-1}$. Since Lemma \ref{lem:distribution2} holds true for any generator $\sigma$, both lemmas are equivalent.
\end{proof}

Putting the two lemmas together we obtain the following result:
\begin{prop}\label{prop:cyclic}
Let $n\geq 5$ be an odd integer. Moreover, let $K/\QQ$ be a Galois extension with Galois group isomorphic to the cyclic group $C_n$. Then $K$ contains a wandering unit under iteration of the Mahler measure.
\end{prop}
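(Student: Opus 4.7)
The plan is to construct an algebraic unit $\alpha \in K$ whose orbit under $M$ is strictly increasing (hence infinite), by choosing $\alpha$ with a prescribed Galois-conjugate distribution and then tracking how this distribution evolves under iteration of $M$ using the two preceding lemmas.

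First I would invoke Dirichlet's unit theorem: because $K$ is totally real of degree $n$, the log-absolute-value map embeds $\mathcal{O}_K^*$ as a lattice of full rank $n-1$ in the trace-zero hyperplane of $\mathbb{R}^n$. I would thus select an algebraic unit $\alpha \in K$ (after possibly replacing $\alpha$ by $-\alpha$ to ensure $\alpha > 0$) whose Galois conjugates satisfy distribution \eqref{eq:alphadistribution2} with precisely $\frac{n+1}{2}$ of them lying outside the unit circle.

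Next I would iterate: by Lemma \ref{lem:distribution2}, $\beta_1 := M(\alpha)$ is an algebraic unit in $K$ satisfying distribution \eqref{eq:alphadistribution2} (if $n \equiv 1 \pmod{4}$) or distribution \eqref{eq:betadistribution2} (if $n \equiv 3 \pmod{4}$), with either $\frac{n+1}{2}$ or $\frac{n-1}{2}$ conjugates outside the unit circle. Applying Lemmas \ref{lem:distribution2} and \ref{lem:distribution1} in alternation, I would inductively obtain $\beta_k := M^k(\alpha)$ of the appropriate distributional type for each $k$. Since $n \geq 5$ forces $\frac{n-1}{2} \geq 2$, each $\beta_k$ has at least two Galois conjugates of absolute value strictly greater than $1$, so $\beta_k$ cannot be a natural number, a Pisot number, or a Salem number. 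By Dubickas's classification of the fixed points of $M$, no $\beta_k$ is then a fixed point of $M$, and combined with the monotonicity of $M$ on algebraic integers $\geq 1$ this forces $\beta_{k+1} > \beta_k$ strictly for all $k$. The orbit of $\alpha$ is therefore infinite, and $\alpha$ is a wandering unit.

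The main obstacle lies in the inductive step: both Lemma \ref{lem:distribution2} and Lemma \ref{lem:distribution1} require the input to have exactly $\frac{n+1}{2}$ conjugates outside the unit circle, yet their conclusions only guarantee the output has either $\frac{n+1}{2}$ or $\frac{n-1}{2}$ conjugates outside. If the drop to $\frac{n-1}{2}$ occurs, neither lemma applies at the next stage. Addressing this will probably require either a genericity argument---choosing $\alpha$ in a sufficiently extremal open subregion of the unit lattice so that the $\frac{n+1}{2}$ count persists for all iterates---or else a supplementary direct analysis of the $\frac{n-1}{2}$ sub-case, perhaps yielding a power relation $M^{k+1}(\alpha) = M^k(\alpha)^m$ for some $m \geq 2$ that one may then feed into Lemma \ref{lem:torsionfree} to conclude wandering-ness.
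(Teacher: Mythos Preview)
Your framework matches the paper's exactly, and you have correctly isolated the one genuine gap: when the count of conjugates outside the unit circle drops from $\frac{n+1}{2}$ to $\frac{n-1}{2}$, neither Lemma~\ref{lem:distribution2} nor Lemma~\ref{lem:distribution1} applies directly, so the induction stalls. Your two proposed repairs (a genericity argument forcing the $\frac{n+1}{2}$ count to persist forever, or hunting for a power relation to feed into Lemma~\ref{lem:torsionfree}) are both more speculative than necessary; in particular, there is no obvious open region of the unit lattice on which the $\frac{n+1}{2}$ count is preserved under \emph{all} iterates, so the first route looks unpromising.

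The paper closes the gap with a much simpler symmetry trick that you are one step away from. Organize the possibilities into four sets: $A_1,A_2$ for distribution \eqref{eq:alphadistribution2} with $\frac{n+1}{2}$ resp.\ $\frac{n-1}{2}$ conjugates outside, and $B_1,B_2$ likewise for distribution \eqref{eq:betadistribution2}. The two lemmas give $M(A_1\cup B_1)\subseteq A_1\cup A_2\cup B_1\cup B_2$. The missing observation is that if $\alpha\in A_2$ then a suitable Galois conjugate of $\alpha^{-1}$ lies in $B_1$ (concretely, set $\beta=\sigma^{-\frac{n-1}{2}}(\alpha)^{-1}$ and reindex); since $M$ is invariant under both Galois conjugation and inversion, $M(\alpha)=M(\beta)$ and Lemma~\ref{lem:distribution1} applies to $\beta$. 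The analogous statement swaps $B_2$ with $A_1$. Hence $M$ maps the whole union $A_1\cup A_2\cup B_1\cup B_2$ into itself. Your fixed-point argument (at least two conjugates outside the unit circle, so not Pisot, Salem, or rational) then finishes the proof exactly as you wrote it.
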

\begin{proof}
Fix any generator $\sigma\in\Gal(K/\Q)$. We define four sets of algebraic units in $K$. 
\begin{itemize} \item[$A_1$:] The set of algebraic units in $K$ with a distribution of Galois conjugates as in \eqref{eq:alphadistribution2} and precisely $\frac{n+1}{2}$ conjugates outside the unit circle.
\item[$A_2$:]  The set of algebraic units in $K$ with a distribution of Galois conjugates as in \eqref{eq:alphadistribution2} and precisely $\frac{n-1}{2}$ conjugates outside the unit circle.
\item[$B_1$:] The set of algebraic units in $K$ with a distribution of Galois conjugates as in \eqref{eq:betadistribution2} and precisely $\frac{n+1}{2}$ conjugates outside the unit circle.
\item[$B_2$:] The set of algebraic units in $K$ with a distribution of Galois conjugates as in \eqref{eq:betadistribution2} and precisely $\frac{n-1}{2}$ conjugates outside the unit circle.
\end{itemize}
By Lemmas \ref{lem:distribution1} and \ref{lem:distribution2}, we know that for all $\alpha\in A_1 \cup B_1$ we have
\[
M(\alpha) \in A_1 \cup A_2 \cup B_1 \cup B_2.
\]
Let $\alpha \in A_2$, then $(\consalph{-i})^{-1} < (\consalph{1+i})^{-1}$ for all $i\in\{0,\ldots,\frac{n-3}{2}\}$, and $(\consalph{i})^{-1} < (\consalph{-i})^{-1}$ for all $i\in \{1,\ldots,\frac{n-1}{2}\}$. We set $\beta=(\consalph{-\frac{n-1}{2}})^{-1}$. Then precisely $\frac{n+1}{2}$ Galois conjugates of $\beta$ are outside the unit circle, and $(\consalph{i})^{-1}= \consbet{i+\frac{n-1}{2}}$ for all $i\in\Z$. Hence, $\consbet{i+\frac{n-1}{2}}< \consbet{-i+\frac{n-1}{2}}$ for all $i\in\{1,\ldots,\frac{n-1}{2}\}$, and $\consbet{-i+\frac{n-1}{2}}<\consbet{1+i+\frac{n-1}{2}}$ for all $i\in\{0,\ldots,\frac{n-3}{2}\}$. If we substitute $j=-i+\frac{n-1}{2}$, then we see that the Galois conjugates of $\beta$ satisfy the distribution \eqref{eq:betadistribution2}.  Hence, we have $\beta\in B_1$. The same argument proves that the inverse of some Galois conjugate of $\alpha \in B_2$, let's call it $\beta$, is in $A_1$. Since the Mahler measure is Galois-invariant and invariant under taking inverses, it follows that for all $\alpha \in A_2 \cup B_2$ we have $M(\alpha)=M(\beta)\in A_1 \cup A_2 \cup B_1 \cup B_2$.

We have seen that the orbit of any element $\alpha \in A_1$ is contained in $A_1\cup A_2 \cup B_1 \cup B_2$. But obviously none of the elements in this union is a fixed point under $M$. Hence, any element in $A_1$ is a wandering point under iteration of $M$. This proves the proposition, since by Lemma \ref{lem:deg5} the set $A_1$ is non-empty.
\end{proof}

It remains for us to prove Lemma \ref{lem:distribution2}. We break the proof into two cases, where we distinguish between $n\equiv 3 \mod{4}$ and $n\equiv 1 \mod{4}$.

\subsection{Proof of Lemma \ref{lem:distribution2} for $n\equiv 3 \mod{4}$}

Since $\frac{n+1}{2}$ is even, the Galois conjugates of $\alpha$ that lie outside the unit circle are $\consalph{0},\consalph{1},\consalph{-1},\ldots,\consalph{\frac{n+1}{4}}$. Hence, we have
 \[
 \beta=M(\alpha)=\prod_{k=-\frac{n-3}{4}}^{\frac{n+1}{4}} \consalph{k}.
 \]
 This implies $\consbet{i}=\prod_{k=-\frac{n-3}{4}}^{\frac{n+1}{4}} \consalph{k+i}$ for all $i\in\Z$, and in particular
 \begin{equation}\label{eq:betavsalpha}
\frac{\consbet{i}}{\consbet{i-1}}=\frac{\consalph{i+\frac{n+1}{4}}}{\consalph{i-\frac{n+1}{4}}} \quad \forall ~ i\in\Z.
\end{equation}
Define for all $i\in\{0,\ldots,\frac{n-1}{2}\}$,
 \[
 \gamma_i = \frac{\abs{\consbet{i}}}{\abs{\consbet{-i}}}.
 \]
 Notice that we have $\gamma_0=1$. By the definition of $\gamma_i$ and \eqref{eq:betavsalpha} we have for all $i\in\{1,\ldots,\frac{n-1}{2}\}$
 \begin{align}
 \gamma_i &= \gamma_{i-1}\cdot \frac{\abs{\consbet{i}\consbet{-i+1}}}{\abs{\consbet{-i}\consbet{i-1}}} = \gamma_{i-1} \cdot \frac{\abs{\consalph{i+\frac{n+1}{4}} \consalph{-i+\frac{n+5}{4}} }}{\abs{ \consalph{i-\frac{n+1}{4}} \consalph{-i-\frac{n-3}{4}}   }} \nonumber \\
 &= \gamma_{i-1} \cdot \frac{ \abs{ \consalph{ (i+\frac{n-3}{4}) +1 } \consalph{ (-i+\frac{n+1}{4})+1 } } }{\abs{ \consalph{-(i+\frac{n-3}{4})} \consalph{-(-i+\frac{n+1}{4})}  } }\label{eq:gamma1}\\
 &= \gamma_{i-1} \cdot \frac{\abs{\consalph{-(i-\frac{n+5}{4})} \consalph{-(-i+\frac{3n-1}{4})} }}{\abs{ \consalph{(i-\frac{n+5}{4})+1} \consalph{(-i+\frac{3n-1}{4})+1}   }} \label{eq:gamma2}.
 \end{align}
For the latter equation we have used that we can shift the indicies by $n$. Using equation \eqref{eq:gamma1} and assumption (i) on the $\consalph{i}$'s we see that 
\[
1=\gamma_0>\gamma_1> \ldots > \gamma_{\frac{n-3}{4}}.
\]
Using instead equation \ref{eq:gamma2} and assumption (i) gives us
\[
\gamma_{\frac{n+1}{4}} < \gamma_{\frac{n+5}{4}}< \ldots < \gamma_{\frac{n-1}{2}}.
\]
We use again that the indicies are all understood to be modulo $n$. Then we have
\[
\gamma_{\frac{n-1}{2}}=\frac{\abs{\consbet{\frac{n+1}{2}+1}}}{\abs{\consbet{\frac{n+1}{2}}}}\overset{\eqref{eq:betavsalpha}}{=} \frac{\abs{\consalph{\frac{n+1}{4}}}}{\abs{\consalph{-\frac{n+1}{4}}}}\overset{(i)}{<}1.
\]
We have just seen that $\gamma_i < 1$ for all $i\in\{1,\ldots,\frac{n-1}{2}\}$. This means
\[
\abs{\consbet{i}}<\abs{\consbet{-i}} \quad \forall\ i\in \bigg\{1,\ldots,\frac{n-1}{2}\bigg\},
 \]
 which proves the inequalities (b) from the lemma. Now we prove that the $\consbet{i}$'s satisfy the condition (a). Define for all $i\in\{1,\ldots,\frac{n-1}{2}\}$,
 \[
 \mu_i = \frac{\abs{\consbet{i}}}{\abs{\consbet{-1-i}}}.
 \]
 Note that $\mu_0>1$, since $\beta=M(\alpha)$ is a Perron number. As before we calculate

 \begin{align*}
 \mu_{i} &= \mu_{i-1}\cdot \frac{ \abs{\consalph{i+\frac{n+1}{4}} \consalph{-i+\frac{n+1}{4}} } }{\abs{ \consalph{i-1-\frac{n-3}{4}} \consalph{-1-i-\frac{n-3}{4}}  }}\\ &= \mu_{i-1}\cdot \frac{ \abs{\consalph{i+\frac{n+1}{4}} \consalph{-i+\frac{n+1}{4}} } }{\abs{  \consalph{-(i+\frac{n+1}{4})} \consalph{-(-i+\frac{n+1}{4})}}}
 \end{align*}
 for all $i\in\{1,\ldots,\frac{n-1}{2}\}$. By assumption (ii) on the $\consalph{j}$'s, we see 
 \[
 1<\mu_0<\mu_1<\mu_2<\ldots<\mu_{\frac{n-3}{4}}.
 \]
 and
 \[
 \mu_{\frac{n+1}{4}}>\mu_{\frac{n+9}{4}}>\ldots > \mu_{\frac{n-1}{2}}.
 \]
Moreover, we have $\mu_{\frac{n-1}{2}}=\frac{\abs{\consbet{\frac{n-1}{2}}}}{\abs{\consbet{1-\frac{n-1}{2}}}}=\frac{\abs{\consbet{\frac{n-1}{2}}}}{\abs{\consbet{\frac{n-1}{2}}}}=1$. We have just seen that $\mu_i >1$ for all $i\in\{0,\ldots,\frac{n-3}{2}\}$. This is equivalent to condition (a) of the lemma.

The distribution of the conjugates of $\beta$ (which we just proved) implies
  \[
 \underbrace{\ldots}_{\frac{n-3}{2}\text{-roots}} > \abs{\consbet{\frac{n-3}{4}}} > \abs{\consbet{-\frac{n+1}{4}}} > \abs{\consbet{\frac{n+1}{4}}} >  \underbrace{\ldots}_{\frac{n-3}{2}\text{-roots}}
  \]
Hence, we need to show that we have $\abs{\consbet{\frac{n-3}{4}}}>1$ and $\abs{\consbet{\frac{n+1}{4}}}<1$. 

We have
\begin{equation}\label{eqn:beta-sig-n34}
\consbet{\frac{n-3}{4}}=\prod_{k=-\frac{n-3}{4}}^{\frac{n+1}{4}}\consalph{k+\frac{n-3}{4}} = \prod_{k=0}^{\frac{n-1}{2}}\consalph{k}=\consalph{0}\cdot \prod_{k=1}^{\frac{n-1}{2}}\consalph{k}.
\end{equation}
Assume for the sake of contradiction that $\abs{\consbet{\frac{n-3}{4}}}\leq 1$, then by \eqref{eqn:beta-sig-n34} and (ii) we also have \[\bigg\lvert\consalph{0}\cdot \prod_{k=1}^{\frac{n-1}{2}}\consalph{-k}\bigg\rvert <1.\] Since $\alpha$ is an algebraic unit, it has norm $\pm 1$. Hence multiplying the two terms leads to
\[
1 > \abs{\consalph{0}}\cdot \bigg\lvert\prod_{k=-\frac{n-1}{2}}^{\frac{n-1}{2}}\consalph{k}\bigg\rvert = \abs{\consalph{0}}. 
\]
This is a contradiction, and hence $\abs{\consbet{\frac{n-3}{4}}}>1$. Lastly we need to prove $\abs{\consbet{\frac{n+1}{4}}}<1$. We argue similarly: Assume for the sake of contradiction that
\[
1\leq \abs{\consbet{\frac{n+1}{4}}}= \prod_{k=-\frac{n-3}{4}}^{\frac{n+1}{4}}\abs{\consalph{k+\frac{n+1}{4}}}=\abs{\consalph{\frac{n+1}{2}}}\cdot \bigg\lvert\prod_{k=1}^{\frac{n-1}{2}}\consalph{k}\bigg\rvert.
\]
Then, by (i), we also have that $1< \abs{\consalph{\frac{n+1}{2}}\cdot \prod_{k=0}^{\frac{n-3}{2}}\consalph{-k}} = \abs{\prod_{k=0}^{\frac{n-1}{2}}\consalph{-k}}$. We multiply these two terms to conclude that
\[
1< \abs{\consalph{\frac{n+1}{2}}}\cdot \bigg\lvert\prod_{k=-\frac{n-1}{2}}^{\frac{n-1}{2}}\consalph{k}\bigg\rvert=\abs{\consalph{\frac{n+1}{2}}}=\abs{\consalph{-\frac{n-1}{2}}}.
\]
This is a contradiction, and hence $1>\abs{\consbet{\frac{n+1}{4}}}$, which concludes the proof of Lemma \ref{lem:distribution2} in the case $n\equiv 3 \mod{4}$.

\subsection{Proof of Lemma \ref{lem:distribution2} for $n\equiv 1 \mod{4}$}

The proof is essentially the same as the proof in the case $n\equiv 3 \mod{4}$. We will only present the main differences. Set $\beta=M(\alpha)$. Then
\[
\consbet{i}=\prod_{k=-\frac{n-1}{4}}^{\frac{n-1}{4}} \consalph{i+k} \quad \forall ~ i\in\Z.
\]
Again we define $\gamma_i = \frac{\abs{\consbet{i}}}{\abs{\consbet{-i}}}$. Then, we have
\[
\frac{\gamma_i}{\gamma_{i-1}} = \frac{\abs{ \consalph{i+\frac{n-1}{4}} \consalph{-i+\frac{n+3}{4}} }}{ \abs{ \consalph{-(i+\frac{n-1}{4})} \consalph{-(-i+\frac{n+3}{4})} }  }.
\]
By (ii) this implies
\[
1=\gamma_0< \gamma_1 < \ldots < \gamma_{\frac{n-1}{4}},
\]
and
\[
\gamma_{\frac{n+3}{4}}> \gamma_{\frac{n+7}{4}}> \ldots > \gamma_{\frac{n-1}{2}}=1.
\]
We have just seen that $\gamma_i >1$ for all $i\in\{1,\ldots,\frac{n-1}{2}\}$, which proves that the Galois conjugates of $\beta$ satisfy the inequalities (ii). To prove the inequalities (i), we set $\mu_i=\frac{\abs{\consbet{-i}}}{\abs{\consbet{1+i}}}$. Then,
\[
\mu_i = \mu_{i-1} \cdot \frac{\abs{ \consalph{-i-\frac{n-1}{4}} \consalph{i-\frac{n-1}{4}} }}{\abs{ \consalph{-i+\frac{n+3}{4}} \consalph{i+\frac{n+3}{4}} }}.
\]
By (i), we conclude
\[
1<\mu_0 < \mu_1 < \ldots < \mu_{\frac{n-5}{4}},
\]
and
\[
\mu_{\frac{n-1}{4}}>\mu_{\frac{n+3}{4}} > \ldots > \mu_{\frac{n-1}{2}}>1.
\]
Hence, it is $\mu_1 > 1$ for all $i\in\{0,\ldots,\frac{n-3}{2}\}$, which proves the inequalities (i) for the conjugates of $\beta$. 

We are left to prove that either precisely $\frac{n-1}{2}$ or precisely $\frac{n+1}{2}$ conjugates of $\beta$ are outside the unit circle. Since we know that $\beta$ satisfies \eqref{eq:alphadistribution2}, we have
\[
\underbrace{\ldots}_{\frac{n-3}{2}\text{-roots}} > \abs{\consbet{\frac{n-1}{4}}} > \abs{\consbet{-\frac{n-1}{4}}} > \abs{\consbet{\frac{n+3}{4}}} >  \underbrace{\ldots}_{\frac{n-3}{2}\text{-roots}}
\]
and we need to show $\abs{\consbet{\frac{n-1}{4}}}>1$ and $\abs{\consbet{\frac{n+3}{4}}} <1$. Assume for the sake of contradiction that 
\[
1\geq \abs{\consbet{\frac{n-1}{4}}}=\abs{\consbet{0}}\cdot \bigg\lvert\prod_{k=1}^{\frac{n-1}{2}} \consbet{k}\bigg\rvert \overset{(ii)}{>} \abs{\consbet{0}}\cdot \bigg\lvert\prod_{k=1}^{\frac{n-1}{2}} \consbet{-k}\bigg\rvert.
\]
Then we also would have $1>\abs{\consbet{0}}\cdot \abs{\prod_{k=-\frac{n-1}{2}}^{\frac{n-1}{2}} \consbet{k}}=\abs{\consbet{0}}$, which gives a contradiction. Lastly assume for the sake of contradiction that
\[
1\leq\abs{\consbet{\frac{n+3}{4}}} = \abs{\consbet{-\frac{n-1}{2}}}\cdot \bigg\lvert\prod_{k=1}^{\frac{n-1}{2}} \consbet{k}\bigg\rvert \overset{(i)}{<} \abs{\consbet{-\frac{n-1}{2}}}\cdot \bigg\lvert\prod_{k=0}^{\frac{n-3}{2}} \consbet{-k}\bigg\rvert.
\]
Then also $1< \abs{\consbet{-\frac{n-1}{2}}}\cdot \abs{\prod_{k=-\frac{n-1}{2}}^{\frac{n-1}{2}} \consbet{k}}=\abs{\consbet{-\frac{n-1}{2}}}$, which is a nonsense by \eqref{eq:alphadistribution2}. Hence, $1>\abs{\consbet{\frac{n+3}{4}}}$, proving Lemma \ref{lem:distribution2} and thus Proposition \ref{prop:cyclic}.

\section{Proof of Theorem \ref{thm:main-abelian}}

We wish to classify all abelian extension $K/\Q$ such that all elements in $K$ are preperiodic under iteration of $M$. Let $K$ be an arbitrary abelian number field, and denote by $K^+$ the maximal real subfield of $K$. Then $K^+$ is necessarily totally real and $M(\alpha)\in K^+$ for all $\alpha \in K$. In particular, all elements of $K$ are preperiodic under iteration of $M$, if and only if all elements of $K^+$ are preperiodic.

We know that there are no wandering points in degree $\leq 3$. In addition with Proposition \ref{prop:totreal4}, we see that there are no wandering points in $K^+$ if $\Gal(K^+/\Q)$ is isomorphic to $C_1$, $C_2$, $C_3$, or $C_2\times C_2$. If $\Gal(K^+/\Q)$ is not isomorphic to one of these groups, then one of the following groups appear as a quotient group of $\Gal(K^+/\Q)$:
\begin{enumerate}[label=(\alph*),start=1]
\item $C_4$
\item $C_2 \times C_2 \times C_2$
\item $C_n$, where $n\geq 5$ is odd
\item $C_6$
\item $C_3 \times C_3$
\end{enumerate}
This is, $K^+$ contains a subfield $F$ such that $\Gal(F/\Q)$ is isomorphic to some group from the list above. But in any of these cases, we know that there are wandering units in $F\subseteq K$ (see Propositions \ref{prop:totreal4}, \ref{prop:3-quad}, \ref{prop:cyclic}, \ref{prop:Gal6}, \ref{prop:C3C3}). This proves Theorem \ref{thm:main-abelian}.

\bibliographystyle{abbrv} 
\bibliography{bib}

\end{document}